\theoremstyle{plain}
\newtheorem{theorem}{Theorem}
\newtheorem{corollary}[theorem]{Corollary}
\newtheorem{lemma}[theorem]{Lemma}
\newtheorem{proposition}[theorem]{Proposition}
\theoremstyle{definition}
\newtheorem{definition}[theorem]{Definition}
\newtheorem{remark}[theorem]{Remark}
\newtheorem*{remark*}{Remark}
\newcommand{\dH}{\frac{d H}{d s}}
\newcommand{\dG}{\frac{d G}{d s}}
\newcommand{\cH}{\mathcal H}
\newcommand{\cR}{\mathcal R}
\newcommand{\N}{\mathbb N}
\newcommand{\R}{\mathbb R}
\newcommand{\pr}{\mathbf P}
\newcommand{\e}{\mathbf E}
\newcommand{\E}{\mathbf E}
\newcommand{\rf}{\rfloor}
\newcommand{\lf}{\lfloor}
\renewcommand{\Im}{\mathop{\mathrm{Im}}\nolimits}
\renewcommand{\Re}{\mathop{\mathrm{Re}}\nolimits}
\begin{document}
\title[Expansions for random walks conditioned to stay positive]
    {Expansions for random walks conditioned to stay positive}

\author[Denisov]{Denis Denisov}
\address{Department of Mathematics, University of Manchester, UK}
\email{denis.denisov@manchester.ac.uk}

\author[Tarasov]{Alexander Tarasov}
\address{Faculty of Mathematics, Bielefeld University, Germany}
\email{atarasov@math.uni-bielefeld.de}

\author[Wachtel]{Vitali Wachtel}
\address{Faculty of Mathematics, Bielefeld University, Germany}
\email{wachtel@math.uni-bielefeld.de}
\begin{abstract}
We consider a one-dimensional random walk $S_n$ with i.i.d. increments with zero mean and finite variance. We study the asymptotic expansion for the tail distribution $\mathbf P(\tau_x>n)$ of the first passage times 
$\tau_x:=\inf\{n\ge1:x+S_n\le0\}$ for $\ x\ge0.$
We also derive asymptotic expansion for local probabilities $\mathbf P(S_n=x,\tau_0>n)$. 
Studying the asymptotic expansions we obtain a sequence of discrete polyharmonic functions and obtain analogues of renewal theorem for them. 
\end{abstract}
    
    
    \keywords{Random walk, exit time, asymptotic expansion}
    \subjclass{Primary 60G50; Secondary 60G40, 60F17}
    \maketitle
    {\scriptsize
    }
\section{Introduction.}
Let  $\{X_k\}$ be a sequence of  independent random variables with zero mean 
$\e X_1=0$ and finite variance $\e X_1^2\in(0,\infty)$. 
Consider a random walk 
 $\{S_n; n\ge0\}$ defined as follows, 
 $S_0=0$ and 
$$
S_n:=X_1+X_2+\ldots+X_n,\ n\ge1.
$$
One of the classical tasks in the theory of random walks is the study of first passage times 
$$
\tau_x:=\inf\{n\ge1:x+S_n\le0\},\ x\ge0.
$$
The exact distribution of $\tau_x$ is known only 
in some special cases that we will shortly discuss below. 
Hence there is much interest for approximations for the tail distribution $\pr(\tau_x>n)$. 
One of the important results in this direction 
says that 
\begin{align}\label{eq:taux_tail}
\pr(\tau_x>n)\sim \frac{V_1(x)}{\sqrt{\pi n}}, \quad \text{uniformly in } x=o(\sqrt{n}), n\to \infty,
\end{align}
for some (discrete) harmonic function $V_1$. 
Besides its own interest and various applications these asymptotics play an important 
role in the studies of random walks conditioned to stay positive~\cite{BD94}.

One of the main purposes of the paper is to improve \eqref{eq:taux_tail} by  obtaining an asymptotic expansion for $\pr(\tau_x>n)$. 
In doing this we derive  a sequence of polyharmonic functions $V_j(x)$ and study its properties.  This requires development of a new more accurate approach that considerably improves over Tauberian theorems in this situation. 
In the multidimensional situation there are some 
results analogous to~\eqref{eq:taux_tail}, see~\cite{DW15} and follow-up papers. 
We believe that these studies will help 
to obtain analogous asymptotic expansions in the 
multidimensional situation. 

We now describe some known results that will bring us to a starting point of our reasoning. 
The distribution of $\tau_0$ can be studied by using the Spitzer formula that follows from the Wiener-Hopf factorisation. The Spitzer formula provides the following exact
expression for the generating function of $\tau_0$ (see, for example, \cite[Theorem~8.9.1,~p.~376]%
{BGT})
\begin{equation}
\label{eq:WF1} 
1-\mathbf{E}s^{\tau_0}
=\exp\left\{-\sum_{n=1}^{\infty }\frac{s^{n}}{n}\mathbf{P}(S_{n}\leq 0)\right\}.
\end{equation}
Using this expression one can find 
an exact expression for $\pr(\tau_0>n)$ when the distribution of $X_1$ is symmetric and has no atoms. In this case   $\mathbf{P}(S_{n}\leq 0)=\frac{1}{2}$ for all $n$ and, consequently,
$$
1-\mathbf{E}s^{\tau_0}=(1-s)^{1/2}.
$$
Equivalently,
$$
\sum_{n=0}^\infty\pr(\tau_0>n)s^n
=\frac{1-\mathbf{E}s^{\tau_0}}{1-s}=(1-s)^{-1/2}.
$$
Therefore,
\begin{equation}
\label{eq:exact}
\pr(\tau_0>n)=(-1)^n{-\frac{1}{2}\choose n}=:a^{(1)}_n,\quad n\ge0.
\end{equation}
Here recall that fractional binomial coefficients are defined by the formula
$$
{\gamma\choose n}=\frac{\gamma(\gamma-1)\ldots(\gamma-n+1)}{n!}.
$$
When the distribution of $X$ is not symmetric one  can use the convergence $\pr(S_n\le 0)\to \frac{1}{2}$ instead of exact equality.  
Then the asymptotics for the tail of $\tau$, see e.g.~\cite[Chapter XII, Theorem 1a]{Feller71}, is given by 
\begin{equation}
\label{eq:tau0_tail}
\pr(\tau_0>n)\sim\frac{K}{\sqrt n}\quad\text{as }n\to\infty.
\end{equation}

In order to explain approach that we use, we rewrite \eqref{eq:WF1} in the following way:
\begin{equation}
\label{eq:WF2}
\sum_{n=0}^\infty\pr(\tau_0>n)s^n
=\frac{1-\mathbf{E}s^{\tau_0}}{1-s}
=(1-s)^{-1/2}\exp\left\{\sum_{n=1}^\infty\frac{s^n}{n}\Delta_n\right\},
\end{equation}
where
$$
\Delta_n:=\frac{1}{2}-\pr(S_n\le 0).
$$
Since  $\e X_1=0$ and $\e X_1^2\in(0,\infty)$ the convergence 
$\pr(S_n\le0)\to1/2$ holds and, by the Spitzer-R\'{o}sen theorem (see \cite[%
Theorem~8.9.23,~p.~382]{BGT}),
$$
\sum_{n=1}^\infty\frac{\Delta_n}{n}<\infty.
$$
Then, applying a Tauberian theorem to the generating function in \eqref{eq:WF2}, we infer that 
\begin{equation}
\label{eq:tau0_tail_1}
\pr(\tau_0>n)\sim 
\exp\left\{\sum_{n=1}^\infty\frac{\Delta_n}{n}\right\}a^{(1)}_n
\quad n\to\infty.
\end{equation}
These  computations together with \eqref{eq:WF2} suggests that if one wants to obtain 
terms of higher order then one needs to learn more about the  behaviour of $\Delta_n$. Remarkably this information is available from the asymptotic expansions in the central limit theorem. 

To formulate our results, we introduce for every $j\ge1$ the sequence
$$
a_n^{(j)}:= [s^n](1-s)^{j-3/2}=(-1)^n{j-\frac{3}{2}\choose n},\ n\ge0,
$$
where $[s^n]f(s)=f_n$ for $f(s) = \sum\limits_{n=0}^\infty f_n s^n$.

It will be shown later that $a_n^{(j)} \sim \beta_j n^{1/2-j}$
for some $\beta_j\neq0$.
This implies that asymptotic expansions in terms of $n^{-j-1/2}$ can be rewritten via $a_n^{(j)}$, and vice versa. There are several arguments why we shall use 
sequences $\{a_n^{(j)}\}$. First of all, $a_n^{(1)}$ naturally appears in $\pr (\tau_0 > n)$ for symmetric case and also as the leading term in the general finite-variance case, see \eqref{eq:tau0_tail_1}. Secondly, they satisfy relation
\begin{align*}
    a_n^{(j)} - a_{n-1}^{(j)} = a_n^{(j+1)}.
\end{align*}
This relation allows one to compute exactly  the tail sums 
\begin{align*}
    \sum_{k=n}^\infty a_k^{(j+1)} = 
    \sum_{k=n}^\infty (a_k^{(j)} - a_{k-1}^{(j)}) = - a_{n-1}^{(j)}, 
\end{align*}
while the sum
\begin{align*}
    \sum_{k=n}^\infty \frac{1}{n^{\alpha}}
\end{align*}
can be calculated only asymptotically. Thus, the sequences $\{a_n^{(j)}\}$ are more convenient also from the technical point of view.
\begin{theorem}
\label{thm:tau_0}
Assume that $\e|X_1|^{r+3}$ is finite for some integer $r\ge0$. Assume also that either the distribution of $X_1$ is lattice with maximal span $1$ or 
it holds $\lim\sup_{|t|\to\infty}\left|\e~e^{itX_1}\right|<1$. Then there exist numbers $\nu_0,\nu_1,\ldots,\nu_{\lfloor r/2\rfloor}$ such that
$$
\pr(\tau_0>n)=\sum_{j=1}^{\lfloor r/2\rfloor+1}
\nu_ja^{(j)}_n
+
O\left(\frac{\log^{{\delta_r}}  n}{n^{(r+2)/2}}\right), \quad n\to\infty, 
$$
where $\delta_r = \mathds{1}_{\{r \text{ is odd} \}}$.

Furthermore, there exist numbers
$\overline{\nu}_0,\overline{\nu}_1,\ldots,\overline{\nu}_{\lfloor r/2\rfloor}$ such that 
$$
\pr(\overline{\tau}_0>n)=\sum_{j=1}^{\lfloor r/2 \rfloor+1} 
\overline{\nu}_ja^{(j)}_n
+
O\left(\frac{\log^{{\delta_r}}  n}{n^{(r+2)/2}}\right),
$$
where 
$$
\overline{\tau}_0:=\inf\{n\ge1:S_n<0\}.
$$
\end{theorem}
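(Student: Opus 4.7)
The plan is to work at the generating-function level. From the Spitzer identity \eqref{eq:WF2},
$$
F(s) := \sum_{n\geq 0} \pr(\tau_0 > n)\, s^n = (1-s)^{-1/2} \exp\!\left\{\sum_{n\geq 1} \frac{s^n}{n} \Delta_n\right\},
$$
the proof decomposes into three natural steps: (i) obtain an Edgeworth-type expansion of $\Delta_n$ in powers of $n^{-1/2}$; (ii) translate it into a local expansion of the exponent near $s=1$; (iii) exponentiate, multiply by $(1-s)^{-1/2}$, and extract coefficients using $[s^n](1-s)^{j-3/2}=a_n^{(j)}$.

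In step (i), under $\e|X_1|^{r+3}<\infty$ together with the lattice or Cram\'er condition, the classical Edgeworth expansion for the distribution function of $S_n/\sqrt{n\Var X_1}$ evaluated at zero yields
$$
\Delta_n = \sum_{\substack{1 \leq k \leq r+1 \\ k \text{ odd}}} \frac{\gamma_k}{n^{k/2}} + O\!\left(\frac{\log^{\delta_r} n}{n^{(r+2)/2}}\right),
$$
the even-order terms vanishing because the corresponding Hermite polynomials are zero at the origin. In step (ii), the polylogarithm expansion at $s=1$ --- for non-integer $\alpha>0$, $\sum_{n\geq 1} s^n/n^\alpha = \Gamma(1-\alpha)(1-s)^{\alpha-1} + (\text{analytic})$ --- applied termwise gives
$$
\sum_{n\geq 1}\frac{s^n}{n}\Delta_n = \varphi(s) + \sum_{\substack{1 \leq k \leq r+1 \\ k \text{ odd}}} c_k (1-s)^{k/2} + R(s),
$$
with $\varphi$ analytic on some disc of radius strictly greater than $1$ and $R$ carrying the Edgeworth error. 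In step (iii), expanding $\exp\!\bigl(\sum c_k (1-s)^{k/2}\bigr)$ and multiplying by $e^{\varphi(s)}(1-s)^{-1/2}$ produces a finite sum of terms $h_j(s)(1-s)^{j-3/2}$ with $h_j$ analytic at $s=1$. Taylor-expanding each $h_j$ around $s=1$ and reading off $[s^n]$, the identity $a_n^{(j)}-a_{n-1}^{(j)}=a_n^{(j+1)}$ collapses the resulting convolutions into $\sum_j \nu_j a_n^{(j)}$; a direct count shows that exactly $\lfloor r/2 \rfloor + 1$ such terms survive above the error threshold.

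The delicate point is quantitative: propagating the Edgeworth error through the exponential and through the convolution with $(1-s)^{-1/2}$ while keeping the sharp order $O(\log^{\delta_r} n / n^{(r+2)/2})$. A direct Tauberian theorem applied to $F$ is too blunt, which is exactly the issue flagged in the introduction; instead one needs a singularity-analysis / contour-integral argument for $[s^n]$, based on pointwise control of the remainder $R$ (and possibly its derivatives) on an indented disc around $s=1$. The $\log n$ factor for odd $r$ arises from the borderline case in which the half-integer power $(1-s)^{r/2}$ contributes at exactly the target error level. For the second assertion on $\overline{\tau}_0$, the Spitzer identity has the same shape with $\pr(S_n < 0)$ in place of $\pr(S_n \leq 0)$, and the above scheme applies verbatim to $\overline{\Delta}_n := 1/2 - \pr(S_n < 0)$. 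In the Cram\'er case $\overline{\Delta}_n = \Delta_n$; in the lattice case the two differ by $\pr(S_n = 0)$, whose asymptotic expansion in powers of $n^{-1/2}$ is supplied by the local central limit theorem under the same moment hypothesis, producing modified constants $\overline{\nu}_j$.
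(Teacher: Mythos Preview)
Your overall strategy matches the paper's: start from the Spitzer representation \eqref{eq:WF2}, feed in the Edgeworth expansion for $\Delta_n$ (this is Proposition~\ref{prop:S_n<0}), write the exponent as a finite sum of half-integer powers of $(1-s)$ plus a remainder, exponentiate, divide by $(1-s)^{1/2}$, and read off coefficients. The treatment of $\overline{\tau}_0$ via the analogous Spitzer identity is also correct in spirit.

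The genuine gap is in step (iii). You propose a ``singularity-analysis / contour-integral argument'' on an indented disc around $s=1$, and you explicitly note that this needs pointwise control of the remainder $R$ on such a domain. But that control is unavailable: the Edgeworth remainder for $\Delta_n$ decays only polynomially, so the generating function $R(s)$ has radius of convergence exactly $1$ and admits no analytic continuation into a $\Delta$-domain. The polylogarithm identity you invoke is a statement about analytic continuation of $\mathrm{Li}_\alpha$ and handles the main terms, but it says nothing about $R$, and no Flajolet--Sedgewick transfer theorem can be applied to it. The paper circumvents this entirely by never leaving the unit disc: Section~\ref{sec:algebra} builds the classes $\mathcal{H}_m^r$ and $\mathcal{R}_{k,m}^r$ of generating functions whose \emph{coefficients} obey a decay bound together with orthogonality to polynomials, and proves closure of these classes under products, exponentials, and multiplication by $(1-s)^{\pm 1/2}$ purely through convolution estimates on coefficients (Lemmata~\ref{mlemma}, \ref{explemma}, \ref{lem:hlemma_2}, \ref{lem:hlemma_3}). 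In particular the $\log n$ factor for odd $r$ comes out of Lemma~\ref{lem:hlemma_3}, where division by $(1-s)^{1/2}$ costs one logarithm precisely when the class index is odd. Replacing your contour argument by this coefficient-level machinery is what is needed to close the proof.
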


It turns out that in order to derive expansions for $\pr(\tau_x>n)$ one has to use asymptotic expansions in the local central limit theorem.  Possibly, the best illustration for this is given by left-continuous random walks. Recall that the random walk $S_n$ is called left-continuous if $\pr(X_1\in\{-1,0,1,2,\ldots\})=1$.  It is well-known (see e.g. \cite[Theorem 12.8.4]{Borovkov13}) that in this case one has the relation 
\begin{equation}
\label{eq:lcont}
\pr(\tau_x=n)=\frac{x}{n}\pr(S_n=-x),\quad x,n\ge1.
\end{equation}
This equality allows one to prove the following result.
\begin{theorem}
\label{thm:left-cont}
Assume that $S_n$ is left-continuous and that $\e|X_1|^{r+3}$ is finite for some integer $r \ge0$. Then there exist polynomials $V_1(x),V_2(x),\ldots,V_{\lfloor r/2\rfloor+1}(x)$ (every $V_k$ is of degree $2k-1$) such that 
$$
\pr(\tau_x>n)=\sum_{j=1}^{\lfloor r/2\rfloor+1} 
V_j(x)a^{(j)}_n+O\left(\frac{1+x^{r+2}}{n^{(r+2)/2}}\right), \quad n\to\infty, 
$$
uniformly in $x\ge0$.
\end{theorem}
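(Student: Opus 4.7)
My starting point is the Kemperman identity \eqref{eq:lcont}. Since zero-mean finite-variance walks are recurrent, $\tau_x<\infty$ almost surely, and summing \eqref{eq:lcont} from $k=n+1$ to $\infty$ gives
\begin{equation*}
    \pr(\tau_x>n)=\sum_{k=n+1}^\infty\frac{x}{k}\,\pr(S_k=-x).
\end{equation*}
Because a left-continuous walk is integer valued, the lattice local CLT applies; the moment bound $\e|X_1|^{r+3}<\infty$ then yields the Edgeworth expansion
\begin{equation*}
    \pr(S_k=-x)=\frac{1}{\sqrt{k}}\phi_\sigma\!\left(\frac{x}{\sqrt{k}}\right)\left(1+\sum_{j=1}^{r+1}k^{-j/2}P_j\!\left(\frac{x}{\sqrt{k}}\right)\right)+\rho_k(x),
\end{equation*}
where $\sigma^2=\Var X_1$, $\phi_\sigma(y)=(2\pi\sigma^2)^{-1/2}e^{-y^2/(2\sigma^2)}$, each $P_j$ is a polynomial of degree $3j$ and parity $j\bmod 2$ built from the cumulants of $X_1$, and $|\rho_k(x)|\le Ck^{-(r+3)/2}$ uniformly in $x$.

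Substituting this into the tail sum separates $\pr(\tau_x>n)$ into $r+2$ main terms
\begin{equation*}
    T_j(n,x):=x\sum_{k>n}\frac{\phi_\sigma(x/\sqrt{k})P_j(x/\sqrt{k})}{k^{(j+3)/2}},\qquad j=0,1,\dots,r+1,
\end{equation*}
(with $P_0\equiv 1$) plus a remainder bounded by $Cx\sum_{k>n}k^{-(r+5)/2}=O(xn^{-(r+3)/2})$. For $x\le\sqrt{n}$ the factor $\phi_\sigma(y)P_j(y)$ is an entire function of $y=x/\sqrt{k}\le 1$, and I would expand it as $\sum_m b_{j,m}y^m$; parity forces $b_{j,m}=0$ unless $m\equiv j\pmod 2$. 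This turns each main term into
\begin{equation*}
    T_j(n,x)=\sum_m b_{j,m}\,x^{m+1}\sum_{k>n}k^{-(j+m+3)/2},
\end{equation*}
and each inner sum, via Euler--Maclaurin, admits a finite expansion in the basis $\{a_n^{(\ell)}\}_{\ell\ge(j+m+2)/2}$ up to an error $O(n^{-(r+2)/2})$; the parity constraint ensures that $(j+m+2)/2$ is an integer. Regrouping by $\ell$ yields the polynomials $V_\ell(x)$, whose degree is attained by the pair $(j,m)=(0,2\ell-2)$ producing $x^{2\ell-1}$, so that $\deg V_\ell=2\ell-1$ as claimed.

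The main obstacle is uniformity in $x\ge 0$. For $x>\sqrt{n}$ the Taylor expansion of $\phi_\sigma(x/\sqrt{k})P_j(x/\sqrt{k})$ no longer converges absolutely on the summation range, and I would split the sum at $k_0=x^2$: on $n<k\le k_0$ the Gaussian factor $\phi_\sigma(x/\sqrt{k})\le C\exp(-x^2/(4\sigma^2k))$ beats every polynomial in $x/\sqrt{k}$ arising from $P_j$, bounding this contribution by $O(x\cdot n^{-(j+1)/2})$, which is controlled by $O(x^{r+2}n^{-(r+2)/2})$ in the regime $x\ge\sqrt{n}$; on $k>k_0$ the expansion argument proceeds exactly as before. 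Combining the two ranges with the remainder bound $O(xn^{-(r+3)/2})\le O((1+x^{r+2})n^{-(r+2)/2})$ yields the claimed uniform error.
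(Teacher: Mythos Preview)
Your approach is correct and starts from the same Kemperman identity \eqref{eq:lcont}, but it takes a noticeably longer route than the paper's argument.

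The paper first repackages the lattice Edgeworth expansion in the basis $\{a_{n-1}^{(j+1)}\}$ via Proposition~\ref{prop:asymplattice}, obtaining
\[
\pr(S_n=-x)=\sum_{j=0}^{\lfloor r/2\rfloor}\theta_j(-x)\,a_{n-1}^{(j+1)}+O\!\left(\frac{(1+|x|)^{r+1}}{n^{(r+2)/2}}\right)
\]
with the error already uniform in $x$. Two exact algebraic identities then do the rest: $a_{n-1}^{(j+1)}/n$ is proportional to $a_n^{(j+2)}$, and $\sum_{k>n}a_k^{(j+2)}=-a_n^{(j+1)}$ from Lemma~\ref{lem:an}(i). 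These turn the tail sum $\sum_{k>n}\frac{x}{k}\pr(S_k=-x)$ directly into $\sum_j x\theta_j(-x)\,a_n^{(j+1)}$ plus the stated remainder---no Taylor expansion, no Euler--Maclaurin, no case split.

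Your Taylor expansion of $\phi_\sigma(y)P_j(y)$ is essentially an inline reproof of Proposition~\ref{prop:asymplattice}, and your Euler--Maclaurin step on $\sum_{k>n}k^{-\alpha}$ replaces the exact telescoping of the $a_n^{(j)}$. So the methods are equivalent, but the paper's organisation buys cleanliness: the $a_n^{(j)}$ basis is tailor-made for exact tail summation, which is precisely why the authors work in it throughout. One further simplification you may have missed: the regime $x>\sqrt n$ needs no splitting of the $k$-sum at all, because there both $\pr(\tau_x>n)\le 1$ and each term $|V_j(x)a_n^{(j)}|\le C(x/\sqrt n)^{2j-1}$ are already bounded by $(x/\sqrt n)^{r+2}=x^{r+2}/n^{(r+2)/2}$, so the claimed error holds trivially.
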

Our approach to expansions for $\pr(\tau_x>n)$ in general case is based on expansions for conditional probabilities $\pr(S_n=x,\overline{\tau}_0>n)$. We believe that these expansions are also of independent interest.

\begin{theorem}
\label{thm:cond_prob}
Assume that $\e|X_1|^{r+3}$ is finite for some integer $r\ge1$.
If $S_n$ is lattice with maximal span $1$ then there exist functions
$U_1$, $U_2$,\ldots, $U_{\lf \frac{r+1}{2}\rf}$ and  $\overline{U}_1, \overline{U}_2,\ldots, \overline{U}_{\lf \frac{r+1}{2}\rf}$ such that, uniformly in $x$,
\begin{equation}
\label{eq:cp1}
\pr(S_n=x,\tau_0>n)=\sum_{j=1}^{\lf \frac{r+1}{2}\rf} U_j(x)a^{(j+1)}_n
+
O\left(\frac{(1+x)^{r+1}}{n^{(r+3)/2}}\log^{{\lf \frac{r}{2}\rf}}n\right)
\end{equation}
and 
\begin{equation}
\label{eq:cp2}
\pr(S_n=x,\overline{\tau}_0>n)=\sum_{j=1}^{\lf \frac{r+1}{2}\rf}  \overline{U}_j(x)a^{(j+1)}_n
+
O\left(\frac{(1+x)^{r+1}}{n^{(r+3)/2}}\log^{\lf \frac{r}{2}\rf }n\right).
\end{equation}
For the functions $U_j$ and $\overline{U}_j$ one has
\begin{align*}
    |U_j(x)|+|\overline{U}_j(x)| \le C(1+|x|)^{2j-1},\quad j\ge1.
\end{align*}

If the distribution of $X_1$ is absolutely continuous then the expansions \eqref{eq:cp1} and \eqref{eq:cp2} hold for densities of measures $\pr(S_n\in\cdot,\tau_0>n)$ and  
$\pr(S_n\in\cdot,\overline{\tau}_0>n)$ respectively.
\end{theorem}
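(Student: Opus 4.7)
The overall strategy is to combine a Wiener--Hopf/renewal-type decomposition of the joint event $\{S_n=x,\overline{\tau}_0>n\}$ with the expansion of Theorem~\ref{thm:tau_0} for the descending ladder tail and an Edgeworth correction to the local central limit theorem. By time reversal the event coincides with $\{S_n=x,\ S_j\le x\text{ for all }0\le j<n\}$, so that $\pr(S_n=x,\overline{\tau}_0>n)$ is the $(n,x)$-renewal mass of the weak ascending ladder $(T_k,S_{T_k})$; decomposing every weak ascending increment into a strict ascending piece followed by a geometric-length visit to the new level until strict ascent yields an identity of convolution form
\begin{equation*}
\pr(S_n=x,\overline{\tau}_0>n)=\sum_{k=0}^n p_k(x)\,\pr(\tau_0>n-k),
\end{equation*}
where $p_k(x)$ is a local ladder kernel whose generating function in $k$ is given explicitly by Spitzer--Baxter. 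Exchanging the roles of strict and weak ladder yields the analogous decomposition feeding into \eqref{eq:cp1}.

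The next step is to substitute the two available expansions. Theorem~\ref{thm:tau_0} provides $\pr(\tau_0>m)=\sum_{j=1}^{\lfloor r/2\rfloor+1}\nu_j a_m^{(j)}+O(m^{-(r+2)/2}\log^{\delta_r}m)$, while the Edgeworth expansion of order $r+1$ for the local CLT gives $p_k(x)=\sum_{i=0}^{r+1}k^{-(i+1)/2}Q_i(x/\sqrt{k})\phi(x/\sqrt k)+O(k^{-(r+3)/2})$, where $Q_i$ is a Hermite-type polynomial of degree at most $2i$ and $\phi$ is the standard Gaussian density. Inserting both expansions into the convolution and swapping the order of summation produces finitely many sums of the schematic form $\sum_{k=0}^n a_{n-k}^{(j)}Q_i(x/\sqrt k)\phi(x/\sqrt k)/k^{(i+1)/2}$. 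Expanding $\phi$ and $Q_i$ around the boundary $k=n$ in inverse powers of $1-k/n$, performing summation by parts, and repeatedly applying the identity $a_n^{(j)}-a_{n-1}^{(j)}=a_n^{(j+1)}$ together with its closed-form tail sum (highlighted in the paper just after \eqref{eq:exact}) collapses each such sum into a finite linear combination of the $a_n^{(j+i+1)}$ whose coefficient is a polynomial in $x$ of degree at most $2(j+i)-1$. Collecting like terms across pairs $(i,j)$ with $i+j\le\lfloor(r+1)/2\rfloor$ defines $\overline U_j(x)$ (and similarly $U_j(x)$) and establishes \eqref{eq:cp1}--\eqref{eq:cp2}; the degree count reads off $|\overline U_j(x)|\le C(1+|x|)^{2j-1}$.

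The main difficulty is uniformity in $x$. On the central scale $|x|\le C\sqrt{n\log n}$ the Edgeworth error for $p_k$ is of order $k^{-(r+3)/2}$ and, when convolved against $\pr(\tau_0>\cdot)\asymp(\cdot)^{-1/2}$, produces a logarithm precisely when the remainder in Theorem~\ref{thm:tau_0} already carries one, which is the source of the factor $\log^{\lfloor r/2\rfloor}n$. On large scales $|x|\gtrsim\sqrt{n\log n}$ the Edgeworth expansion is no longer available and one must argue directly from $\e|X_1|^{r+3}<\infty$ by truncation to obtain $\pr(S_n=x)=O((1+|x|)^{-(r+3)})$, which is what absorbs the polynomial prefactors and yields the stated error $(1+x)^{r+1}n^{-(r+3)/2}\log^{\lfloor r/2\rfloor}n$. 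The absolutely continuous case is handled by the same scheme with sums replaced by integrals, the lattice Edgeworth expansion replaced by its density counterpart (available under the same moment and Cram\'er-type conditions), and no structural change in the telescoping or splitting argument.
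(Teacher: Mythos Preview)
Your proposal rests on a convolution identity
\[
\pr(S_n=x,\overline{\tau}_0>n)=\sum_{k=0}^n p_k(x)\,\pr(\tau_0>n-k)
\]
with a kernel $p_k(x)$ that you then claim satisfies an ordinary local Edgeworth expansion $p_k(x)=\sum_i k^{-(i+1)/2}Q_i(x/\sqrt k)\phi(x/\sqrt k)+O(k^{-(r+3)/2})$. Neither step is justified, and taken together they do not appear to hold. The duality you invoke correctly identifies $\pr(S_n=x,\overline{\tau}_0>n)$ with the weak ascending ladder renewal mass at $(n,x)$, but the ``strict ladder step followed by geometric sojourn'' factorisation does not produce a convolution in the time variable alone against $\pr(\tau_0>\cdot)$: if you split at the first hitting time of level $x$, the second factor is $\pr(S_j\le0\text{ for }j\le m,\ S_m=0)$ for the post-$k$ increment walk, not $\pr(\tau_0>m)$; if you split at the global minimum as in the paper's treatment of $\tau_x$, the factor carrying the endpoint is a \emph{conditioned} local probability of exactly the type you are trying to expand, so the argument becomes circular. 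In either reading, the kernel $p_k(x)$ is not the free local probability $\pr(S_k=x)$ and there is no reason it should admit the stated Edgeworth form. The subsequent summation-by-parts manipulation and the uniformity discussion therefore have no valid input to operate on.

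For comparison, the paper does not reduce to Theorem~\ref{thm:tau_0} at all. It starts from the recurrence $n\,b_n(x)=\pr(S_n=x)+\sum_{k=1}^{n-1}\sum_{y=1}^{x-1}\pr(S_k=y)b_{n-k}(x-y)$ (Lemma~17 of \cite{VW09}), which at the generating-function level is a first-order ODE for $B(x,s)=\sum_n b_n(x)s^n$. The only Edgeworth input is Proposition~\ref{prop:asymplattice} for the \emph{unconditioned} local probabilities, which places $\Psi(x,s)=\sum_n s^{n-1}\pr(S_n=x)$ in the class $\cR_{-1,r-1}^0$. One then bootstraps: knowing $B(x,\cdot)\in\cR_{0,m}^\ast$, the ODE and the multiplicative calculus of Section~\ref{sec:algebra} force $\frac{d}{ds}B\in\cR_{-1,m-1}^\ast$, hence $B\in\cR_{0,m+1}^\ast$; after $r$ iterations this yields the expansion for fixed $x$ and an explicit recursion \eqref{eq:qrecurrent} for the coefficients. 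Uniformity in $x$ is obtained by a separate induction (Lemmas~\ref{lem:psi2j}--\ref{lem:b-bound}) that tracks the $x$-dependence of the $\cH$-norms through the recursion; this is where the $\log^{\lfloor r/2\rfloor}n$ accumulates, not from any interaction with the $\tau_0$ expansion.
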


These expansions are refinements of the results in \cite{VW09},
where the local limit theorems for conditioned random walks were proven. 

Now we can state our main result on asymptotic expansions for the tails of stopping times $\tau_x$.
\begin{theorem}
\label{thm:tau_x}
Assume that $\e|X_1|^{r+3}$ is finite for some integer $r\ge2$.
Assume also that $S_n$ is either lattice or absolutely continuous. Then there exist functions
$V_1$, $V_2$,\ldots, $V_{\lf r/2 \rf+1}$ such that, uniformly in $x\ge0$,
$$
\pr(\tau_x>n)=\sum_{j=1}^{\lf r/2 \rf+1}V_j(x)a_n^{(j)}+O\left(\frac{(1+x)^{r-2}}{n^{(r-1)/2}}\log^{\lf \frac{r+1}{2}\rf}n\right).
$$
\end{theorem}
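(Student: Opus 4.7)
The plan is to combine a Markov decomposition of $\pr(\tau_x>n)$ at an intermediate time with the local expansion of Theorem~\ref{thm:cond_prob}, bootstrapping to an expansion of the tail with successively more terms. Split at $m=\lfloor n/2\rfloor$:
\[
\pr(\tau_x>n)=\sum_{y\ge 1}p_m(x,y)\,\pr(\tau_y>n-m),\qquad p_m(x,y):=\pr(\tau_x>m,\,x+S_m=y).
\]
Time reversal gives $p_m(x,y)=\pr(\max_{0\le j\le m-1}S_j\le y-1,\ S_m=y-x)$, which, after a shift, is a conditioned local probability for the walk with increments $-X_i$; Theorem~\ref{thm:cond_prob} then supplies an expansion of $p_m(x,y)$ in the quantities $a_m^{(j+1)}$.

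I would argue by induction on the number of terms. Assuming the expansion $\pr(\tau_y>N)=\sum_{j=1}^{J}V_j(y)\,a_N^{(j)}+\text{error}$ already holds with polynomial growth $|V_j(y)|=O((1+y)^{2j-1})$, I substitute it into the Markov identity, use $a_k^{(j)}-a_{k-1}^{(j)}=a_k^{(j+1)}$ to expand $a_{n-m}^{(j)}$ as a finite combination of $a_n^{(k)}$ with polynomial coefficients in $m$, and expand $p_m(x,y)$ via Theorem~\ref{thm:cond_prob}. Matching the coefficient of each $a_n^{(l)}$ forces the discrete polyharmonic relation $(I-P)V_{j+1}=-PV_j$ (with $V_0\equiv 0$ and $P$ the transition operator of the walk killed on exit from $(0,\infty)$); this relation \emph{defines} $V_{J+1}$ in terms of $V_J$ and lets the induction proceed. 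The base case is $V_1$ harmonic for $P$, which recovers the leading order~\eqref{eq:taux_tail}. Equivalently, one verifies inductively that $(P-I)^{j}V_j=0$, which is the discrete polyharmonic property announced in the abstract.

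The main obstacle will be uniform error control in $x\ge 0$, which must be sharp in the CLT regime $x\asymp\sqrt n$ where the expansion is genuinely informative. This requires splitting the $y$-sum into a central regime $y=O(\sqrt m)$, where the expansion of $p_m(x,y)$ from Theorem~\ref{thm:cond_prob} is effective, and a tail regime $y\gg\sqrt m$, handled by the moment hypothesis $\e|X_1|^{r+3}<\infty$ together with Fuk--Nagaev-type large-deviation bounds for the conditioned walk. One must also propagate the growth bound $|V_j(x)|=O((1+x)^{2j-1})$ through the inductive step, drawing on the corresponding bound on $\overline U_j$ in Theorem~\ref{thm:cond_prob}, and keep careful bookkeeping of the logarithmic factors already present in Theorem~\ref{thm:cond_prob}, which is what produces the $\log^{\lfloor(r+1)/2\rfloor}n$ in the stated remainder.
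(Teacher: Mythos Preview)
Your proposal takes a genuinely different route from the paper, and as written it has real gaps.

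\textbf{What the paper does.} The paper does not iterate a Markov step. Instead it applies the duality lemma at the \emph{last} time the running minimum is attained, obtaining the exact generating-function factorisation
\[
\sum_{n\ge0}s^n\pr(\tau_x>n)
=\Bigl(1+\sum_{y=0}^{x-1}\widetilde B(y,s)\Bigr)\sum_{n\ge0}s^n\pr(\tau_0>n),
\]
where $\widetilde B(y,s)=\sum_{n\ge1}s^n\pr(\widetilde S_n=y,\widetilde\tau_0>n)$ is the conditioned local generating function for the reversed walk. The expansion of the second factor comes from Theorem~\ref{thm:tau_0}; the expansion of $\widetilde B$ comes from Theorem~\ref{thm:cond_prob}. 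Multiplying these expansions and controlling the remainder is done entirely with the $\cH_m^r/\cR_{k,m}^r$ algebra of Section~\ref{sec:algebra}; no induction on the number of terms is needed, and the functions $V_j$ are obtained explicitly as $V_j(x)=Q_{2j-3}(x)$ via \eqref{eq:Q-q}. Polyharmonicity is then proved \emph{afterwards} (Theorem~\ref{thm:polyharm}) by a one-step Markov argument, not used to construct the $V_j$.

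\textbf{Gaps in your scheme.} Two steps in your outline do not work as stated. First, with $m=\lfloor n/2\rfloor$ the claim that $a_{n-m}^{(j)}$ expands as a finite combination of $a_n^{(k)}$ with polynomial coefficients in $m$ is false: iterating $a_k^{(j)}-a_{k-1}^{(j)}=a_k^{(j+1)}$ gives a sum of length $m$, not a polynomial, and asymptotically $a_{n-m}^{(j)}\sim 2^{\,j-1/2}a_n^{(j)}$ when $m\sim n/2$, so the two scales do not decouple. The paper avoids this entirely by staying at the level of generating functions, where the convolution becomes a pointwise product. Second, your inductive definition of $V_{J+1}$ through $(I-P)V_{J+1}=-PV_J$ is underdetermined: any harmonic function can be added, so you have not specified which $V_{J+1}$ to take, nor shown that a choice exists with the required growth $O((1+x)^{2J+1})$. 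In the paper this issue never arises because the $V_j$ are read off directly from the product expansion. If you want to salvage a Markov approach, the natural route is the one-step recursion $\pr(\tau_x>n)=P\pr(\tau_\cdot>n-1)(x)$ together with a careful bootstrap on the remainder, but that is essentially the argument of Section~\ref{sec:poly} run backwards, and making it uniform in $x$ with the stated error would require most of the $\cH_m^r$ machinery anyway.
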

The main asymptotic term for $\pr(\tau_x>n)$ is known in the literature, see Corollary 3 in Doney~\cite{Doney12}, where the main term is found for all asymptotically stable random walks.

Grama and Xiao~\cite{GramaXiao21} have studied the rate of convergence for conditioned random walks, which can be seen as an analogue of the classical Berry-Esseen inequality.  

Since 
$$
\{\tau_x>n\}=\left\{\min_{k\le n}S_k>-x\right\}
=\left\{\max_{k\le n}(-S_k)<x\right\},
$$
The question on asymptotic expansions for the tail of $\tau_x$ is equivalent to the question of asymptotic expansions for the running maximum of a random walk. This  problem was studied rather intensively in 60's of the last century. Koroljuk~\cite{Koroljuk62}   derived some expansions for the probability 
$\pr\left(\max_{k\le n}S_k<x\sqrt{n}\right)$ for every fixed $x>0$. Furthermore, 
Theorem~6 in Borovkov~\cite{Borovkov62} contains a full expansion for
$\pr\left(\max_{k\le n}S_k<x\right)$ for $x=o(n)$ under the assumption that $X_1$ has finite exponential moment. The remainder term in that result is of order $O(e^{-\gamma x})$ with some $\gamma>0$. Thus, Borovkov's expansion is not applicable in the case of bounded $x$. Nagaev~\cite{Nagaev70}  derived an expansion for $\pr\left(\max_{k\le n}S_k<x\sqrt{n}\right)$ under the assumption that the moment of order $m$ is finite. The remainder term in that paper is given by
$$
O\left(\min\left\{\frac{1}{\sqrt{n}}, (1+(x/\sqrt{n})^{1-m})n^{1-m/2}\log^2n\right\}\right).
$$
This implies that his expansion is also not applicable in the case of fixed $x$.
It is also worth mentioning that the coefficients in expansions from \cite{Borovkov62, Koroljuk62, Nagaev70} are polynomials of $x$. This fact indicates already that these results can not be valid for fixed values of $x$. (It remains to notice that the renewal function $V$ is not a linear function in general.)

We now turn to the properties of the functions $U_k$, $\overline{U}_k$ and $V_k$, which show up as coefficients in our asymptotic expansions.  
\begin{theorem}
\label{thm:polyharm}
Assume that the conditions of Theorems \ref{thm:cond_prob} and \ref{thm:tau_x}
are valid. Then the function $V_k$ is polyharmonic of order $k$ for $\{S_n\}$ killed at leaving $(0,\infty)$, that is,
$$
(P-I)^kV_k=0,
$$
where $I$ is the identity and the operator $P$ is defined by
$$
Pf(x)=\E[f(x+X);x+X>0].
$$
Moreover, every $U_k$ is polyharmonic of order $k$ for $\{-S_n\}$ killed at leaving $(0,\infty)$.
\end{theorem}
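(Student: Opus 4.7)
The idea is to couple the Markov property of the killed walk with the expansions in Theorems \ref{thm:tau_x} and \ref{thm:cond_prob}. Writing each probability in two ways — as an expansion in the scales $a_n^{(j)}$ and as one step of the walk applied to the next-time-step version — lets us read off algebraic relations between consecutive coefficients, and a short induction then upgrades them to the polyharmonicity statements.

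I first treat $V_k$. Set $g_n(x):=\pr(\tau_x>n)$. The one-step Markov property at time one gives
\[
g_{n+1}(x)=\E[g_n(x+X_1);\,x+X_1>0]=Pg_n(x),
\]
so $(P-I)g_n(x)=g_{n+1}(x)-g_n(x)$. I plug the expansion from Theorem \ref{thm:tau_x} into both sides. On the right, the identity $a_{n+1}^{(j)}-a_n^{(j)}=a_{n+1}^{(j+1)}$ turns the increment into $\sum_j V_j(x)\,a_{n+1}^{(j+1)}$ plus a small error; on the left, linearity of $P$ gives $\sum_j (P-I)V_j(x)\cdot a_n^{(j)}$ plus a small error (for $P$ acting on the remainder term, one uses the uniform bound in $x$). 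Since $a_n^{(j)}\sim\beta_j n^{1/2-j}$ with $\beta_j\neq 0$, the scales are all distinct; fixing $x$ and peeling off the leading order in $n$ forces
\[
(P-I)V_1=0,\qquad (P-I)V_k=V_{k-1}\quad(k\geq 2).
\]
A straightforward induction then gives $(P-I)^kV_k=(P-I)^{k-1}V_{k-1}=\cdots=(P-I)V_1=0$.

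For $U_k$ the argument runs in parallel, using the dual operator $P^{\star}f(x):=\E[f(x-X_1);\,x-X_1>0]$, which is the transition kernel of $\{-S_n\}$ killed at leaving $(0,\infty)$. In the lattice case, for $x>0$,
\[
\pr(S_{n+1}=x,\tau_0>n+1)=\sum_{y>0}\pr(S_n=y,\tau_0>n)\pr(X_1=x-y)=P^{\star}p_n(x),
\]
with $p_n(y):=\pr(S_n=y,\tau_0>n)$. Repeating the matching computation with the expansion of Theorem \ref{thm:cond_prob} (which lives on the scales $a_n^{(j+1)}$) yields $(P^{\star}-I)U_1=0$ and $(P^{\star}-I)U_k=U_{k-1}$ for $k\geq 2$, and hence $(P^{\star}-I)^kU_k=0$. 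The absolutely continuous case is identical with densities in place of mass functions.

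The crux of the argument is the coefficient-matching step: I need to show that an asymptotic identity of the form $\sum_{k=1}^{m}c_k(x)\,a_n^{(k)}=o(n^{1/2-m})$ as $n\to\infty$ forces $c_k(x)=0$ for all $k\le m$. This follows from the disjoint orders $a_n^{(k)}\sim\beta_k n^{1/2-k}$ with $\beta_k\neq 0$: divide by $a_n^{(1)}$ and let $n\to\infty$ to kill $c_1(x)$, then divide by $a_n^{(2)}$ and iterate. A secondary technical point is to verify that $PV_j(x)$ and $P^{\star}U_j(x)$ are well defined; here the bound $|U_j(x)|\le C(1+|x|)^{2j-1}$ from Theorem \ref{thm:cond_prob}, an analogous polynomial growth bound for $V_j$ built into its construction, and the moment hypothesis $\E|X_1|^{r+3}<\infty$ combine to make all the relevant expectations absolutely convergent.
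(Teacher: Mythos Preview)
Your proposal is correct and follows essentially the same route as the paper: both apply the one-step Markov property $\pr(\tau_x>n+1)=\E[\pr(\tau_{x+X_1}>n);\,x+X_1>0]$, plug in the expansion from Theorem~\ref{thm:tau_x} on both sides, exploit the identity $a_{n+1}^{(j)}-a_n^{(j)}=a_{n+1}^{(j+1)}$, and peel off coefficients by dividing by $a_{n+1}^{(j)}$ and letting $n\to\infty$ to obtain $(P-I)V_1=0$ and $(P-I)V_k=V_{k-1}$, whence $(P-I)^kV_k=0$ by induction; the $U_k$ case is handled dually in exactly the way you describe. The only cosmetic difference is that the paper truncates the sum at $y\le\sqrt{n}$ and then restores the tail via a separate moment estimate, whereas you apply $P$ directly to the uniform remainder bound---your version is slightly more streamlined but not materially different.
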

The polyharmonicity of the coefficients is not really surprising.
In the case of the Brownian motion, applying the reflection principle, one gets
\begin{align*}
\pr\left(\inf_{s\le t}B(s)>-x\right)
&=\pr\left(\inf_{s\le 1}B(s)>-\frac{x}{\sqrt{t}}\right)\\
&=2\Phi\left(\frac{x}{\sqrt{t}}\right)
=\sqrt{\frac{2}{\pi}}\int_0^{\frac{x}{\sqrt{t}}}e^{-u^2/2}du.
\end{align*}
Plugging here the series representation of $e^{-u^2/2}$, we obtain
$$
\pr\left(\inf_{s\le t}B(s)>-x\right)
=\sum_{k=0}^\infty\frac{(-1)^k}{k!(2k+1)2^k}
\frac{x^{2k+1}}{t^{k+1/2}}.
$$
Every function $x^{2k+1}$ is $k$-polyharmonic in the classical sense, i.e. $\frac{d^2k}{dx^{2k}}x^{2k+1}=0$. 
We are aware of only a few results on asymptotic expansions and polyharmonic functions for discrete time random walks. 
Chapon, Fusy and Raschel~\cite{CFR20} have constructed asymptotic expansions for some particular random walks in the positive quarter-plane. Nessmann~\cite{Nessmann22} has constructed all polyharmonic functions for two-dimensional random walks confined to the positive quarter. Nessmann~\cite{Nessmann23} has obtained asymptotic expansions for the so-called orbit-summable random walks in the positive quadrant. Finally, Elvey-Price, Nessmann and Raschel~\cite{ENR23} have derived asymptotic expansions for perturbed simple random walks in the positive quadrant. The intriguing feature of their expansions is the appearance of the logarithmic terms. All these results deal with random walks with small jumps, that is, all possible jumps belong to the set 
$\{-1,0,1\}^2$. 

Our last result deals with the asymptotic behaviour of polyharmonic functions introduced in our expansions. 
\begin{theorem}
\label{thm:V-asymp}
Assume that $\E e^{\lambda|X|}$ is finite for some $\lambda>0$ and that the distribution of $X$ is lattice. Then, for every $k$ there exist polynomials $P_k$, $Q_k$ and $\overline{Q}_k$ of degree $2k-1$
such that 
\begin{align*}
V_k(x)&=P_k(x)+O(e^{-\varepsilon x}),\\
U_k(x)&=Q_k(x)+O(e^{-\varepsilon x}),\\
\overline{U}_k(x)&=\overline{Q}_k(x)+O(e^{-\varepsilon x}).
\end{align*}
with some $\varepsilon>0$. 
\end{theorem}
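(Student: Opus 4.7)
The plan is to induct on $k$, combining the polyharmonic identity $(P-I)^{k}V_{k}=0$ furnished by Theorem~\ref{thm:polyharm} with an exponentially refined renewal theorem for the ladder height process, whose increments inherit finite exponential moments from $X$. For the base case $k=1$, $V_{1}$ is the unique (up to scaling) positive harmonic function for $\{S_n\}$ killed at leaving $(0,\infty)$, and can be realised via the renewal measure of the descending ladder height process of $S_n$. Under $\E e^{\lambda |X|}<\infty$ that process also has finite exponential moments, so Stone's decomposition (equivalently, Carlsson's exponential refinement of Blackwell's renewal theorem) delivers $V_1(x)=\alpha_1 x+\beta_1+O(e^{-\varepsilon x})$ for some $\varepsilon>0$ and constants $\alpha_1,\beta_1$. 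The same argument applied to $-S_n$ handles $U_1$ and $\overline{U}_1$, matching the claim with $P_1,Q_1,\overline{Q}_1$ of degree~$1$.

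For the induction step I would first extract a recursion among the coefficients. Plugging the expansion of Theorem~\ref{thm:tau_x} into the one-step Markov identity $\pr(\tau_x>n)=P\pr(\tau_{\bullet}>n-1)(x)$ and using $a_{n-1}^{(j)}=a_n^{(j)}-a_n^{(j+1)}$ to align time indices, equating the $a_n^{(k)}$-coefficient yields $(P-I)V_k=W_{k-1}$, where $W_{k-1}$ is an explicit linear combination of $V_1,\dots,V_{k-1}$. By the induction hypothesis $W_{k-1}(x)=p_{k-1}(x)+O(e^{-\varepsilon x})$ with $p_{k-1}$ a polynomial of degree at most $2k-3$. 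Since $\E X=0$ and $\sigma^2:=\E X^2>0$, the free-walk operator $\widetilde{P}f(x):=\E f(x+X)$ has the property that $\widetilde{P}-I$ maps polynomials of degree $d$ surjectively onto polynomials of degree $d-2$, with two-dimensional kernel $\{1,x\}$. Hence a polynomial $P_k$ of degree $2k-1$ solving $(\widetilde{P}-I)P_k=p_{k-1}$ exists, uniquely up to an additive $\alpha x+\beta$ to be fixed at the end. Because $P_k$ is polynomial and $X$ has exponential moments, the ``killed'' correction is exponentially small in $x$: $(P-\widetilde{P})P_k(x)=-\E[P_k(x+X);x+X\le 0]=O(e^{-\varepsilon x})$, so $(P-I)R_k(x)=O(e^{-\varepsilon x})$ for the remainder $R_k:=V_k-P_k$.

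It remains to upgrade this equation into exponential decay of $R_k$ itself. Writing $R_k=-G_{(0,\infty)}g+\lambda_1 V_1+\lambda_0$ with $g:=(P-I)R_k$ and $G_{(0,\infty)}:=\sum_{n\ge0}P^n$ the Green kernel of the killed walk, the problem reduces to showing that $G_{(0,\infty)}$ sends exponentially decaying functions to exponentially decaying ones. This is a Wiener--Hopf statement in an exponentially weighted $\ell^\infty$-space: both ladder height generating functions extend analytically into a strip around the unit disc thanks to $\E e^{\lambda|X|}<\infty$, yielding a bound $\sup_{x>0}e^{\varepsilon x}|P^n g(x)|\le Cq^n$ with $q<1$, which is summable in $n$. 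The polynomial remainder $\lambda_1 V_1+\lambda_0$ then merges into $P_k$ using the base-case expansion of $V_1$. The case of $U_k$ and $\overline{U}_k$ is entirely symmetric, applying the same reasoning to the walk $\{-S_n\}$ killed at leaving $(0,\infty)$.

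\textbf{Main obstacle.} The verification of the recursion $(P-I)V_k=W_{k-1}$ and the polynomial solvability step are essentially bookkeeping, using only that $\widetilde{P}-I$ strictly lowers degree. The genuinely delicate point is the boundedness of $G_{(0,\infty)}$ on exponentially weighted spaces; it is the only place where the hypothesis $\E e^{\lambda|X|}<\infty$ is truly essential, and it requires a careful Wiener--Hopf factorisation of $I-zP$ with $z$ allowed to lie in an exponentially enlarged disc. Once this is in hand, the induction closes cleanly, with the polynomial $P_k$ (resp.\ $Q_k$, $\overline{Q}_k$) of degree $2k-1$ as claimed.
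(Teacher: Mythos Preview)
Your induction via the polyharmonic recursion is a natural idea, and it is genuinely different from the paper's proof, which never touches the operator $P-I$ at this stage: the paper instead inducts directly on the explicit convolution recursion \eqref{eq:qrecurrent} for the coefficients $q_\ell$, after first proving (by a contour-shift of the Fourier inversion formula, Lemma~\ref{lem:psipoly}) that each $\psi_{2j}(x)$ is a polynomial plus $O(e^{-\varepsilon x})$, and then observing (Lemma~\ref{lem:explop}) that convolution preserves the form ``polynomial $+$ exponentially small''.

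However, the step you yourself flag as the main obstacle does not go through as written. The bound $\sup_{x>0}e^{\varepsilon x}|P^{n}g(x)|\le Cq^{n}$ with $q<1$ is false: for large $x$ the killing is essentially inactive over the first $n$ steps, so $P^{n}g(x)\approx\E[g(x+S_{n})]\asymp e^{-\varepsilon x}\bigl(\E e^{-\varepsilon X}\bigr)^{n}$, and $\E e^{-\varepsilon X}=1+\tfrac{1}{2}\varepsilon^{2}\sigma^{2}+o(\varepsilon^{2})>1$ since $\E X=0$. More fundamentally, $G_{(0,\infty)}$ does \emph{not} send exponentially decaying functions to exponentially decaying ones. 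Already for simple random walk one has $G(x,y)=2\min(x,y)$, whence $G_{(0,\infty)}g(x)\to\sum_{y\ge1}2y\,g(y)\neq0$ as $x\to\infty$. What is actually true, and would suffice, is that $G_{(0,\infty)}g(x)=L+O(e^{-\varepsilon' x})$ for some constant $L$; the constant can then be absorbed into the free linear part of $P_{k}$. But proving this exponential convergence of the half-line Green kernel to its limit is itself a Gelfond-type statement of the same depth as the base case, and it is not the cheap consequence of analytic Wiener--Hopf continuation that your sketch suggests. A minor related point: constants are not $P$-harmonic (since $P\mathbf{1}(x)=\pr(x+X>0)<1$ near the boundary), so the term $\lambda_{0}$ in your decomposition $R_{k}=-G_{(0,\infty)}g+\lambda_{1}V_{1}+\lambda_{0}$ has no right to be there; the non-vanishing constant you need has to emerge as the limit $L$ of $G_{(0,\infty)}g$, not from $\ker(P-I)$.
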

These estimates can be seen as a generalisation of Gelfond's 
\cite{Gelfond64} estimate for the renewal function. 
\vspace{6pt}

\textbf{Organisation of the paper.} In the subsequent short section we collect some known properties of sequences $\{a_n^{(j)}\}$. In Section~\ref{sec:clt_expan} we derive asymptotic expansions in the central limit theorem, which will play a crucial role in the proof of our main results; their proofs are postponed to Section~\ref{prop8.and.9}. In Section~\ref{sec:algebra} we introduce a special class of generating functions and develop technical tools of analysis of that class. This section is the most important part in our approach to the asymptotic expansions for conditioned walks. 
Section~\ref{sec:loc_prob} is devoted to the proof of expansions for local probabilities of conditioned walks.
In Sections~\ref{sec:tau0} and \ref{sec:tau_x} we prove our results on stopping times $\tau_x$, Theorems \ref{thm:tau_0},\ref{thm:left-cont} and \ref{thm:tau_x}. Finally, in Section~\ref{sec:poly} we prove polyharmonicity and derive asymptotic approximation for the coefficients in our expansions, see Theorems \ref{thm:polyharm} and \ref{thm:V-asymp}. 

\section{Some properties of binomial coefficients}
For integer $j\ge 1$ recall the sequence 
\begin{align}\label{eq:anbin}
    a_n^{(j)}:= (-1)^n \binom{j-\frac{3}{2}}{n}, \quad n\ge 0,
\end{align}
arising from the binomial theorem 
\begin{align}\label{eq:anser}
    \sum_{n=0}^{\infty} a_n^{(j)}s^n = (1-s)^{j-3/2}.
\end{align}

For these sequences we will make use of the following asymptotic expansion (see \cite[%
Lemma~17]{DenisovFitzgerald23}). For any $j \ge 1$  there is a sequence $\{\beta_k^{(j)}\}_{k=j}^\infty$ such that for any $m \ge j$ it holds
\begin{align} \label{eq:anasympdecomp}
    a_n^{(j)} = \sum_{k=j}^{m} \frac{\beta_k^{(j)}}{n^{k-1/2}} + O\left( 
    \frac{1}{n^{m+1/2}}
    \right),\quad n\to \infty, 
\end{align}
and furthermore 
\begin{align*}
    \beta_0^{(j)} = \frac{(-1)^{j-1}\Gamma(j-1/2)}{\pi}.
\end{align*}
As an immediate corollary for any $j\ge 1$ there exists a sequence $\{\gamma_k^{(j)}\}_{k=j}^\infty$  such that for any $m \ge j$ it holds
\begin{align}\label{eq:njasympdecomp}
    \frac{1}{n^{j-1/2}} = \sum_{k=j}^m \gamma_k^{(j)} a_n^{(k)} + O\left( \frac{1}{n^{m+1/2}}\right),\quad n\to \infty .
\end{align}
\begin{lemma}
\label{lem:an}
    The sequences $a_n^{(j)}$ for $j\ge 1$ posses the following properties:
    
$$ a_{n}^{(j)} - a_{n-1}^{(j)} = a_{n}^{(j+1)}, \quad n \ge 1; \eqno (i)$$

$$|a_{n-k}^{(j)} - a_n^{(j)}| \le \frac{c_j k}{ n^{j+1/2}}, \quad k \le n/2, \, n\ge 2. \eqno (ii) $$
\end{lemma}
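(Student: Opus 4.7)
The plan is to handle part (i) by direct manipulation of generating functions (or equivalently the Pascal rule for generalized binomial coefficients), and then to derive part (ii) by iterating (i) and using the already recorded asymptotics \eqref{eq:anasympdecomp} for $a_n^{(j+1)}$.

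For (i), I would start from the generating function identity \eqref{eq:anser}. The sequence $\{a_n^{(j)} - a_{n-1}^{(j)}\}_{n\ge 0}$ (with the convention $a_{-1}^{(j)} := 0$) has generating function
\begin{equation*}
(1-s)\sum_{n=0}^{\infty} a_n^{(j)} s^n = (1-s)\cdot (1-s)^{j-3/2} = (1-s)^{(j+1)-3/2} = \sum_{n=0}^{\infty} a_n^{(j+1)} s^n,
\end{equation*}
and matching coefficients yields the claim. Equivalently, one can argue purely combinatorially via Pascal's rule for generalized binomial coefficients, $\binom{j-3/2}{n} + \binom{j-3/2}{n-1} = \binom{j-1/2}{n}$, which after multiplication by $(-1)^n$ is exactly (i).

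For (ii), I would telescope using (i): for $k \le n/2$,
\begin{equation*}
a_{n-k}^{(j)} - a_{n}^{(j)} = -\sum_{m=n-k+1}^{n}\bigl(a_{m}^{(j)} - a_{m-1}^{(j)}\bigr) = -\sum_{m=n-k+1}^{n} a_{m}^{(j+1)}.
\end{equation*}
Now invoke the asymptotic expansion \eqref{eq:anasympdecomp} applied to $a_m^{(j+1)}$ with $m=j+1$: there exists $C_j>0$ such that $|a_m^{(j+1)}| \le C_j/m^{j+1/2}$ for all $m\ge 1$. Since $k \le n/2$ forces every $m$ in the summation range to satisfy $m \ge n - k + 1 \ge n/2 + 1$, we get $|a_m^{(j+1)}| \le C_j'/n^{j+1/2}$ uniformly in such $m$. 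Summing the $k$ terms yields the bound $c_j k/n^{j+1/2}$.

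Neither step is really an obstacle; the only mildly delicate point is making sure one has a clean (non-asymptotic) pointwise bound $|a_m^{(j+1)}| \lesssim m^{-(j+1/2)}$ to apply term by term, but this follows immediately from \eqref{eq:anasympdecomp} together with the fact that $a_m^{(j+1)}$ is defined for all $m\ge 0$ and is hence bounded on any initial segment. Thus the whole argument reduces to (i) plus a one-line sum over at most $n/2$ terms.
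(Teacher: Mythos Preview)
Your proof is correct and follows essentially the same route as the paper: part (i) via the generating function identity $(1-s)\cdot(1-s)^{j-3/2}=(1-s)^{j-1/2}$, and part (ii) by telescoping with (i) and bounding each $|a_m^{(j+1)}|\le C_j m^{-j-1/2}$ on the range $m\ge n/2$. The only cosmetic difference is the indexing of the telescoping sum.
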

\begin{proof}

The statement (\textit{i}) easily follows:
\begin{align*}
    a_{n}^{(j)} - a_{n-1}^{(j)} = [s^n] \left( (1-s)  \sum_{n=0}^{\infty} a_n^{(j)}s^n\right) =& [s^n](1-s)(1-s)^{j-3/2}\\ =&[s^n] (1-s)^{j-1/2} = a_n^{(j+1)}, \quad n\ge 1.
\end{align*}
Next applying (\textit{i}) one obtains
\begin{equation*}
    a_{n-k}^{(j)} - a_n^{(j)} = -\sum_{\ell=0}^{k-1} a_{n-\ell}^{(j+1)}
\end{equation*}
and so it holds
\begin{align*}
    |a_{n-k}^{(j)} - a_n^{(j)}| \le \sum_{\ell=0}^{k-1} |a_{n-\ell}^{(j+1)}|.
\end{align*}
Due to \eqref{eq:anasympdecomp} we have $|a_n^{(j+1)}| \le C_j n^{-j-1/2}, \; n \ge 1$ for some $C_j$. Hence
\begin{align*}
    |a_{n-k}^{(j)} - a_n^{(j)}| \le C_j \sum_{\ell=0}^{k-1} (n-\ell)^{-j-1/2}
\end{align*}
Since $k\le n/2$ we have $(n-\ell)\ge n/2$, for $\ell < k$ and so we got
\begin{align*}
    |a_{n-k}^{(j)} - a_n^{(j)}| \le \frac{C_j}{2^{j+1/2}} k n^{-j-1/2}
\end{align*}
which is equivalent to (\textit{ii}) with $c_j = C_j/2^{j+1/2}$.
\end{proof}

\section{Asymptotic expansions in the central limit theorem}
\label{sec:clt_expan}
\subsection{Asymptotic expansions for $\pr(S_n\le x)$}
As we have mentioned in the introduction, our approach to asymptotic expansions for conditioned random walks requires detailed information on the asymptotic behaviour of probabilities $\pr(S_n\le x)$ and $\pr(S_n=x)$. In this section we collect some asymptotic expansions for these probabilities. 
The proofs of these results can be found at the end of the paper in Section~\ref{prop8.and.9}. 

We start by deriving asymptotic expansions for $\pr(S_n\le 0)$ which will be in used the proof of Theorem~\ref{thm:tau_0}.
\begin{proposition}
\label{prop:S_n<0}
{Assume that $\e X_1 = 0$, $ \e X_1^2 =: \sigma^2>0$} and that $\e|X_1|^{r+3}$ is finite for some integer {$r\ge0$}. Assume also that either the distribution of $X_1$ is lattice or $\limsup_{|t|\to\infty}\left|\e e^{itX_1}\right|<1$. Then there exist numbers 
$\theta_1,\theta_2,\ldots,\theta_{\lfloor r/2\rfloor + 1}$ such that
$$
\frac{\Delta_n}{n}
=\frac{1}{n}\left(\frac{1}{2}-\pr(S_n\le 0)\right)
=
\sum_{j=1}^{\lfloor r/2\rfloor + 1}\theta_j
a_{{n-1}}^{(j+1)}
+o\left( \frac{1}{n^{(r+3)/2}} \right).
$$
\end{proposition}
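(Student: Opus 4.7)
The plan is to derive the expansion of $\Delta_n=\tfrac{1}{2}-\pr(S_n\le 0)$ from the classical Edgeworth expansion of $\pr(S_n\le x)$ evaluated at $x=0$, and then to convert the resulting half-integer-power expansion into the $\{a_{n-1}^{(j+1)}\}$-basis after dividing by $n$. The regularity hypothesis (lattice span $1$ or Cram\'er's condition) is exactly what guarantees that the Edgeworth expansion holds with remainder $o(n^{-(r+1)/2})$ under the moment assumption $\e|X_1|^{r+3}<\infty$; this is classical (see e.g.\ Petrov).

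First, I would invoke the Edgeworth expansion at $x=0$. In the non-lattice case this reads
$$
\pr(S_n\le 0)=\tfrac{1}{2}+\frac{1}{\sqrt{2\pi}}\sum_{k=1}^{r+1}\frac{P_k(0)}{n^{k/2}}+o\bigl(n^{-(r+1)/2}\bigr),
$$
where $P_k$ is the $k$-th Edgeworth polynomial, of degree $3k-1$ and parity $(-1)^{k+1}$. In the lattice span-$1$ case the same expansion holds up to an additional correction supported at half-integer orders (the standard lattice jump term at the integer atom $0$). The crucial parity observation is that $P_k(0)=0$ whenever $k$ is even (as $P_k$ is then an odd polynomial), so only odd $k$ contribute. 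Setting $k=2j-1$, the number of surviving terms is exactly $\lfloor r/2\rfloor+1$, and we obtain
$$
\Delta_n=\sum_{j=1}^{\lfloor r/2\rfloor+1}\frac{b_j}{n^{j-1/2}}+o\bigl(n^{-(r+1)/2}\bigr)
$$
for suitable constants $b_j$.

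Dividing by $n$ shifts every exponent by $1$, giving $\Delta_n/n=\sum_{j}b_j/n^{j+1/2}+o(n^{-(r+3)/2})$. Applying the change-of-basis identity \eqref{eq:njasympdecomp} with parameter $j+1$ (and depth large enough to absorb its $O$-error) rewrites this as $\sum_{j}\theta_j'\,a_n^{(j+1)}+o(n^{-(r+3)/2})$. Finally, Lemma~\ref{lem:an}(i) in the form $a_n^{(j+1)}=a_{n-1}^{(j+1)}+a_n^{(j+2)}$ lets me pass from $a_n$ to $a_{n-1}$ by iteratively reabsorbing each $a_n^{(j+2)}=O(n^{-j-3/2})$ into the higher-indexed coefficients, in a lower-triangular fashion. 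This produces the desired constants $\theta_j$ and the stated expansion.

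The main obstacle is pinning down the Edgeworth expansion in the lattice case carefully enough to see that no genuine integer-power terms $n^{-\ell}$ survive at the required precision. Beyond the parity of $P_k(0)$, this relies on the fact that odd derivatives at $0$ of the Gaussian density (and of its polynomial corrections with matching parity) vanish, so that the Euler--Maclaurin summation of the local Edgeworth density on $\{k\le 0\}$ contributes only half-integer-power terms. Once this is verified, the remainder of the proof is a routine algebraic manipulation with the sequences $\{a_n^{(j)}\}$ collected in the preceding section.
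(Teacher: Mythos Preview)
Your proposal is correct and follows essentially the same route as the paper: Edgeworth expansion at $x=0$, the parity observation that $Q_\nu(0)=0$ for even $\nu$, division by $n$, and the triangular change of basis from $n^{-(j+1/2)}$ to $a_{n-1}^{(j+1)}$ via \eqref{eq:njasympdecomp} and Lemma~\ref{lem:an}(i). The only notable difference is in the lattice case: the paper invokes Petrov's Theorem~VI.3.6 directly, obtaining the explicit correction terms $\delta_\nu(\sigma\sqrt{n})^{-\nu}S_\nu(0)\,[\tfrac{d^\nu}{dx^\nu}U_r(x)]_{x=0}$, and then checks that $S_{2k+1}(0)=0$ together with the parity of $\Phi$ and the $Q_j$ forces only odd powers of $n^{-1/2}$ to survive---whereas you sketch the same cancellation via Euler--Maclaurin applied to the local expansion; both arguments encode the same parity mechanism, but the paper's version is more explicit and requires less additional justification.
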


For our approach to asymptotic expansions for 
local probabilities $\pr(S_n\in dx,\tau_0>n)$ we need asymptotic expansions for local probabilities (densities in the absolute continuous case) of unconditioned walks.
\begin{proposition}
\label{prop:asymplattice}
Assume that $\e X_1 = 0$, $ \e X_1^2 =: \sigma^2 > 0$ and $\e|X_1|^{r+3}$ is finite for some integer $r\ge0$. 
Assume also that the distribution of $X_1$ is either lattice with maximal span $1$ or
absolutely continuous with bounded density. Set
$$p_n(x) = 
\begin{cases}
\pr (S_n = x), \text{ if the distribution of } X_1 \text{ is lattice},\\
\frac{d}{dx}\pr (S_n \in (0,x)) , \text{ if the distribution of } X_1 \text{ is absolutely continuous.} 
\end{cases}
$$
Then there exist polynomials $\theta_0, \theta_1,\ldots \theta_{\lf 
r/2\rf}$ ($\theta_k$ is of degree $2k$) such that, for some $C_r$,
\begin{align} \label{eq:prop6}
p_n(x)
=
\sum_{j=0}^{\lf 
r/2\rf}\theta_j(x) a_{n-1}^{(j+1)}
+
h_n(x)
\end{align}
and
\begin{align*}
    |h_n(x)|\le C_r \frac{(|x|+1)^{r+1}}{n^{(r+2)/2}}, \quad n\ge 1.
\end{align*}
\end{proposition}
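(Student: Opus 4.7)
The proposition is an Edgeworth-type local central limit theorem recast in the basis $\{a_{n-1}^{(j+1)}\}_{j\ge 0}$. The strategy is to start from the classical Edgeworth expansion for local probabilities (resp.\ densities), Taylor-expand the Gaussian factor in $x^2/n$, collect terms by powers of $1/\sqrt{n}$, and translate the resulting expansion into the $a$-basis via \eqref{eq:njasympdecomp}.

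Under the given moment and regularity assumptions, the standard local Edgeworth expansion (see e.g.\ Petrov, or Bhattacharya--Rao) supplies
\begin{equation*}
p_n(x) = \frac{1}{\sigma\sqrt{n}}\,\phi\!\left(\frac{x}{\sigma\sqrt{n}}\right)\!\left[1 + \sum_{k=1}^{r+1}\frac{P_k(x/(\sigma\sqrt{n}))}{n^{k/2}}\right] + \epsilon_n(x),
\end{equation*}
where $\phi$ is the standard Gaussian density and $P_k$ is an explicit polynomial of degree $3k$ in its argument, with coefficients determined by the first $k+2$ cumulants of $X_1$; the remainder $\epsilon_n(x)$ admits a Petrov-type bound decaying in $|x|$. (The proof of this step is deferred to Section~\ref{prop8.and.9}.) Next I would Taylor-expand $\phi(x/(\sigma\sqrt{n}))=\frac{1}{\sqrt{2\pi}}\sum_{m\ge 0}\frac{(-1)^m}{m!}(x^2/(2\sigma^2n))^m$ and multiply the two expansions, regrouping the result as
\begin{equation*}
p_n(x) = \sum_{j=0}^{\lfloor r/2\rfloor}\frac{\tilde\theta_j(x)}{n^{j+1/2}} + O\!\left(\frac{(1+|x|)^{r+1}}{n^{(r+2)/2}}\right).
\end{equation*}
Tracking how powers of $x$ and of $1/\sqrt{n}$ combine, the top-degree monomial $x^{2j}/n^{j+1/2}$ arises solely from the Gaussian Taylor expansion (with no Edgeworth correction), whereas the $k$-th Edgeworth polynomial $P_k$ contributes to the coefficient of $n^{-j-1/2}$ only monomials of degree $2j-k<2j$. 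Hence $\deg\tilde\theta_j=2j$. Finally, applying \eqref{eq:njasympdecomp} to each $n^{-j-1/2}$ to rewrite it as a finite combination of $a_n^{(j+1)},\ldots,a_n^{(\lfloor r/2\rfloor+1)}$ modulo error $O(n^{-(\lfloor r/2\rfloor+3/2)})$, and then using Lemma~\ref{lem:an}(ii) to replace $a_n^{(k)}$ by $a_{n-1}^{(k)}$ at cost $O(n^{-k-1/2})$, produces the representation \eqref{eq:prop6} with polynomials $\theta_j$ of degree $2j$.

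The principal obstacle is obtaining the error estimate \emph{uniformly} in $x$. The Gaussian Taylor expansion is only effective in the bulk $|x|\lesssim \sqrt{n}$; there one needs a form of the local Edgeworth theorem whose remainder decays in $|x|$ (e.g.\ $|\epsilon_n(x)|\le C(1+|x|)^{-(r+3)}n^{(r+1)/2}$), which is a standard consequence of Fourier methods but requires careful bookkeeping. In the complementary region $|x|\ge c\sqrt{n}$ matters are much simpler: the uniform LLT bound $p_n(x)\le C/\sqrt{n}$, together with the estimate $|\theta_j(x)a_{n-1}^{(j+1)}|\le C(1+|x|)^{2j}n^{-j-1/2}$ with $2j\le r$, are both dominated by $C(1+|x|)^{r+1}/n^{(r+2)/2}$ once $|x|\ge c\sqrt{n}$. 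Thus the delicate part is producing the Edgeworth remainder bound that genuinely decays in $|x|$; the tail regime and the algebraic passage to the $a$-basis are comparatively routine.
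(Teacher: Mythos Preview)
Your approach is essentially the paper's: start from Petrov's local Edgeworth expansion, Taylor-expand the Gaussian, regroup by powers of $1/n$, and switch to the $a$-basis. Your degree count $\deg\theta_j=2j$ via the contribution $2j-k$ from the $k$-th Edgeworth polynomial matches the paper's computation through \eqref{eq:hermitpol1}, and your treatment of the range $|x|\ge c\sqrt n$ is exactly what the paper does.

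The one point where you go astray is the ``principal obstacle''. You do \emph{not} need an Edgeworth remainder that decays in $|x|$. Petrov's theorem gives $\sigma\sqrt{n}\,p_n(x)=(\text{main terms})+o(n^{-(r+1)/2})$ \emph{uniformly} in $x$; after dividing by $\sigma\sqrt{n}$ this remainder is $o(n^{-(r+2)/2})$, which is already dominated by $C(1+|x|)^{r+1}n^{-(r+2)/2}$ since $(1+|x|)^{r+1}\ge1$. The genuine work in the bulk $|x|\le\sqrt n$ is controlling the error from truncating the Taylor expansion of $e^{-x^2/(2\sigma^2 n)}$ and the cross terms with the Hermite expansion --- this is where the factor $(1+|x|)^{r+1}$ actually originates, via bounds of the type \eqref{ineq:restexptaylor}. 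You sketched this part correctly; it is the core of the argument, not the Edgeworth remainder. One small point you skate over: you should note that the Edgeworth polynomials $P_k$ have the parity of $k$ (they are sums of Hermite polynomials $H_{k+2s}$), which is what forces only half-integer powers $n^{-j-1/2}$ to survive after regrouping.
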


\section{Special algebra of generating functions}
\label{sec:algebra}
Let $\{h_n\}$ be a sequence of real numbers.
Let $H(s)$ denote its generating function, i.e.
$$
H(s)=\sum_{n=0}^\infty h_n s^n.
$$
By $[s^n]H(s)$ we shall denote the 
$n$-th coefficient of a generating function.

In this section we shall define and study special classes of generating functions, which will play a very important role in our approach to asymptotic expansions for conditioned random walks.
\begin{definition}
\label{def.1}
Fix some $r\in\N_0$ and $m\in\N_0 \cup \{-1\}$.
Let $\mathcal{H}_m^r$ denote the class of generating functions \begin{equation*}
H(s) = \sum\limits_{n=0}^\infty h_n s^n, \ s \in [0,1)
\end{equation*}
of sequences $\{h_n\}_{n=0}^\infty$ satisfying the following two conditions:
\begin{enumerate}
    \item $h_n = O\left( \frac{(\log n)^r}{n^{(m+3)/2}}\right)$;
    \item $\sum\limits_{n=0}^\infty h_n P(n) = 0$ for every polynomial $P$ such that 
    its degree does not exceed $\lfloor m/2 \rfloor$.
\end{enumerate}
\end{definition}
Note that all sums $\sum\limits_{n=0}^\infty h_n P(n)$ exist since, by the first restriction on $\{h_n\}$,
\begin{equation*}
n^{\frac{m}{2}}h_n = O\left( \frac{(\log n)^r}{n^{3/2}}\right).
\end{equation*}
For every $H\in\mathcal{H}_m^r$ we define
\begin{equation*}
|H(s)|_{\mathcal{H}_m^r} 
=
\inf \left\{ C \ \Big| \ |h_0|, |h_1| \le C, \; \;
|h_n| \le \frac{C\log^r n}{n^{(m+3)/2}}
\quad\text{for all }  n\ge2  \right\}.
\end{equation*}
Obviously 
\begin{align*}
    |\lambda H(s)|_{\mathcal{H}_m^r} = \lambda| H(s)|_{\mathcal{H}_m^r}
\end{align*}
and 
$|\cdot|_{\mathcal{H}_m^r}$ is zero only for $H(s)\equiv0$. Also for $H,G \in \cH_m^r$ it holds 
\begin{equation*}
    |H + G|_{\cH_m^r} \le |H|_{\cH_m^r} + |G|_{\cH_m^r}.
\end{equation*}
Therefore, $|\cdot|_{\mathcal{H}_m^r}$ defines a norm on $\mathcal{H}_m^r$. It is also easy to see that $\cH_m^r$ is closed under summation and that 
$\cH_{m_0}^r \subset \cH_{m_1}^0$ for all $m_0 > m_1$ and all $r\ge0$.

Later for the functions used in the definition of the norm we will need the following observation.
\begin{remark} \label{rem:norm}
For every pair $(m,r)$ there exists a constant $L(m,r) \in (1, \infty)$ such that 
\begin{align*}
    \max_{k > \lf n/2\rf } \frac{\log^r k }{k^{(m+3)/2}} \le L(m,r) \frac{\log^r n}{n^{(m+3)/2}}, \quad n \ge 2.
\end{align*}
\end{remark}
\begin{proof}
Indeed, the function $\log^r k /k^{(m+3)/2}$ is monotone decreasing for 
$k\ge e^{2r/(m+3)}$. Therefore, for $k\ge 2e^{2r/(m+3)}$,
\begin{align*}
\max_{k > \lf n/2 \rf} \frac{\log^r k }{k^{(m+3)/2}} 
\le \frac{\log^r (n/2)) }{(n/2)^{(m+3)/2}} 
\le 2^{(m+3)/2}\frac{\log^r n }{n^{(m+3)/2}}.
\end{align*}
this implies immediately the existence of an appropriate constant $L(m,r)$.
\end{proof}

For our purposes 
one of the most important properties of the classes $\cH_m^r$ is the behaviour of its members under differentiation and integration. This property is stated in the following lemma.

\begin{lemma}
\label{dlemma}
For every $m \ge 1$ one the two following conditions are equivalent:
\begin{enumerate}
    \item $H \in \cH_m^r,$
    \medskip
    \item $\dH \in \cH_{m-2}^r \text{ and } \lim_{s \to 1} H(s) = 0.$
\end{enumerate}
Moreover, there exists a constant $C$ depending only on $m$ and $r$ such that 
$$
\left|\dH\right|_{\mathcal{H}_{m-2}^r}\le C(m,r)|H|_{\mathcal{H}_m^r}
$$
\end{lemma}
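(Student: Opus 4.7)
Set $g_n := (n+1)h_{n+1}$ for $n\ge 0$, so that $\dH(s) = \sum_{n\ge 0} g_n s^n$. I would prove the two implications directly at the level of coefficients, using the recursion $h_n = g_{n-1}/n$ for $n\ge 1$ to recover $\{h_n\}$ from $\{g_n\}$. For $(1)\Rightarrow(2)$, the decay hypothesis $|h_n|\le C\log^r n/n^{(m+3)/2}$ translates directly into $|g_n|\le C'\log^r n/n^{(m+1)/2}$, which matches the decay requirement for $\cH_{m-2}^r$. For the moment conditions, any polynomial $Q$ with $\deg Q\le\lfloor (m-2)/2\rfloor=\lfloor m/2\rfloor-1$ satisfies
$$
\sum_{n\ge 0}g_n Q(n)=\sum_{k\ge 1}k\, h_k\, Q(k-1)=\sum_{k\ge 0}h_k P(k),
$$
with $P(k):=k Q(k-1)$ of degree at most $\lfloor m/2\rfloor$; this sum vanishes by (2) applied to $H$. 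Finally, applying (2) to $P\equiv 1$ (which is allowed because $\lfloor m/2\rfloor\ge 0$ for $m\ge 1$) gives $\sum_n h_n=0$, and since $\{h_n\}$ is absolutely summable on $[0,1]$, this yields $\lim_{s\to 1}H(s)=0$.

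For $(2)\Rightarrow(1)$, the estimate $|h_n|=|g_{n-1}|/n\le C''\log^r n/n^{(m+3)/2}$ for $n\ge 3$ follows from $n-1\ge n/2$ together with the decay of $g_n$, while $h_1=g_0$ is bounded and $h_0$ is controlled by the integral identity
$$
h_0=-\int_0^1\dH(t)\,dt=-\sum_{n\ge 0}\frac{g_n}{n+1},
$$
whose right-hand side converges absolutely precisely because $m\ge 1$. For the moment conditions on $\{h_n\}$, I would decompose any polynomial $P$ of degree $\le\lfloor m/2\rfloor$ as $P(n)=P(0)+nP_1(n)$ with $\deg P_1\le\lfloor m/2\rfloor-1$; then
$$
\sum_{n\ge 0}h_n P(n)=P(0)\sum_{n\ge 0}h_n+\sum_{k\ge 0}g_k P_1(k+1),
$$
and both terms vanish: the second by (2) applied to the polynomial $k\mapsto P_1(k+1)$ whose degree is at most $\lfloor(m-2)/2\rfloor$, and the first because the decay estimate just established combined with the hypothesis $\lim_{s\to 1}H(s)=0$ forces $\sum_n h_n=H(1^-)=0$.

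The norm bound is then obtained by tracking constants at each step: in both directions the best $C$ in the decay bounds picks up only an absolute factor depending on $m$ and $r$, which yields $\bigl|\dH\bigr|_{\cH_{m-2}^r}\le C(m,r)\,|H|_{\cH_m^r}$. I expect the only mildly delicate part to be bookkeeping at the first few indices $n=0,1,2$, where the seminorm is controlled by a uniform bound rather than an asymptotic one; this is also precisely where the assumption $m\ge 1$ is used, to ensure the series for $h_0$ converges absolutely and that $\{h_n\}$ is summable so that $H(1^-)=\sum_n h_n$ is legitimate.
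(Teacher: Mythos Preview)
Your proposal is correct and follows essentially the same route as the paper. Both arguments hinge on the coefficient relation $g_n=(n+1)h_{n+1}$, the identity $\sum_n g_n Q(n)=\sum_k k\,h_k\,Q(k-1)$ (the paper writes this with $Q(n)=n^k$, obtaining the basis $(n-1)^k n$), and the observation that $\sum_n h_n=0$ holds in both directions and bridges the gap between polynomials of degree $\lfloor m/2\rfloor$ and those of degree $\lfloor (m-2)/2\rfloor$; the paper handles the norm bound with a one-line ``obvious'' remark, and your extra bookkeeping on $h_0$ via the integral identity is not actually needed for the equivalence (a single coefficient does not affect the $O(\cdot)$ condition), though it does no harm.
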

\begin{proof}
Noting that $[s^n]\dH(s) = (n+1)[s^{n+1}]H(s)$, we infer that
\begin{equation*}
[s^n]H(s) = O\left( \frac{(\log n)^{r}}{n^{(m+3)/2}}\right) \Longleftrightarrow
[s^n]\frac{dH}{ds}(s) = O\left( \frac{(\log n)^{r}}{n^{(m+1)/2}}\right).
\end{equation*}
Thus, it remains to show equivalence of ``orthogonality to polynomials'' properties. We first notice that 
\begin{equation}
\label{eq:zerosum}
    \sum_{n=0}^\infty h_n = 0.
\end{equation}
in both cases.
Indeed, if  $H \in \cH_m^r$ for $m\ge1$ then \eqref{eq:zerosum} follows from the definition of $\cH_m^r$. If $\dH \in \cH_{m-2}^r$ then $(n+1)h_{n+1} = O\left((\log n)^{r}/n\right)$. In particular, $h_n = O\left(n^{-3/2}\right)$ and, consequently, $\sum_{n=0}^\infty |h_n|$ is finite. The assumption $\lim\limits_{s \to 1} H(s) = 0 $ yields then \eqref{eq:zerosum}.
Moreover, combining \eqref{eq:zerosum} and $h_n=O(n^{-3/2})$, we infer that 
$\lim_{s\to1}H(s)=0$ for every $h\in\cH_{m}^r$ with $m\ge1$.

Now suppose that $\dH \in \cH_{m-2}^r$. Hence $[s^n]\dH = O\left( \frac{(\log n)^{r}}{n^{(m+1)/2}}\right)$ and for every $k = 0, 1, \dots, \lfloor \frac{m-2}{2} \rfloor$ it holds
\begin{align}
\label{eq:HH'}
\nonumber
0 = \sum_{n=0}^\infty n^k [s^n]\dH(s)
&=\sum_{n=0}^\infty n^k(n+1)[s^{n+1}]H(s)
=\sum_{n=1}^\infty (n-1)^knh_n\\
&=\sum_{n=0}^\infty (n-1)^knh_n.
\end{align}
Combining \eqref{eq:zerosum} and \eqref{eq:HH'} we conclude that the sequence $\{h_n\}$ is orthogonal to all polynomials of degree $\le \lfloor m/2 \rfloor$. In the case $H \in \cH^r_m$ we can read \eqref{eq:HH'} in opposite direction starting with orthogonality of $h_n$ to $(n-1)^kn$ for $k=0, 1, \dots ,\lfloor \frac{m-2}{2} \rfloor$. Thus we have  obtained orthogonality of $[s^n]\dH$ to $n^k, k =0, 1, \dots, \lfloor \frac{m-2}{2} \rfloor$ Then  $\dH\in\cH_{m-2}^r$ and the proof is complete.

The relation between the norms of $H$ and $\dH$ is obvious.
\end{proof}


\begin{lemma} \label{mlemma}
If $H, G\in \cH_{m}^{r}$ for some $m\ge 0$ then

\begin{equation*}
    H \cdot G \in \cH^{r}_{m}.
\end{equation*}
Moreover, there exists a constant $C(m,r)$ such that 
\begin{equation} \label{1410}
|H \cdot G|_{\cH_m^r} \le 
C(m,r) |H|_{\cH_m^r}|G|_{\cH_m^r}.
\end{equation}
\end{lemma}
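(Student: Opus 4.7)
The plan is to verify the two defining conditions of $\cH_m^r$ directly for the Cauchy product $H(s)G(s) = \sum_{n=0}^\infty c_n s^n$ with $c_n = \sum_{k=0}^n h_k g_{n-k}$, and to track the constants through each step.

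For the decay condition, set $A = |H|_{\cH_m^r}$ and $B = |G|_{\cH_m^r}$. Bounds $|c_0| \le AB$ and $|c_1| \le 2AB$ are immediate. For $n \ge 2$, I split $c_n$ at $k = \lfloor n/2 \rfloor$. In the first half, $n-k \ge \lceil n/2 \rceil$, so Remark~4.2 yields $|g_{n-k}| \le L(m,r) B (\log n)^r/n^{(m+3)/2}$ uniformly, and $\sum_{k \le n/2} |h_k|$ is bounded by the convergent tail $A\bigl(2 + \sum_{k \ge 2} (\log k)^r/k^{(m+3)/2}\bigr)$, since $(m+3)/2 \ge 3/2 > 1$. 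The second half is handled symmetrically. Combining the two pieces gives $|c_n| \le C(m,r) AB (\log n)^r / n^{(m+3)/2}$, which is both the decay property and the desired norm bound \eqref{1410}.

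For the orthogonality condition, take a polynomial $P$ of degree $d \le \lfloor m/2 \rfloor$. The double series $\sum_{i,j \ge 0} h_i g_j P(i+j)$ is absolutely convergent: expanding $|P(i+j)| \le C(1 + i^d + j^d)$, each of the resulting products of sums converges because $\sum_i |h_i| i^d \le C' \sum_i (\log i)^r / i^{(m+3)/2 - d}$ with $(m+3)/2 - d \ge 3/2$. Hence Fubini applies and
\begin{equation*}
\sum_{n=0}^\infty c_n P(n) = \sum_{i,j \ge 0} h_i g_j P(i+j).
\end{equation*}
Writing $P(x+y) = \sum_{a+b \le d} \alpha_{a,b} x^a y^b$ by the binomial theorem on each monomial of $P$, the right-hand side factors as $\sum_{a+b \le d} \alpha_{a,b} \bigl(\sum_i h_i i^a\bigr)\bigl(\sum_j g_j j^b\bigr)$. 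Since $a,b$ are nonnegative integers with $a+b \le \lfloor m/2 \rfloor$, each of $a,b$ lies in $\{0,1,\ldots,\lfloor m/2 \rfloor\}$, and condition~(2) for either $H$ or $G$ forces every factor to vanish.

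The only real obstacle is the bookkeeping in the convolution estimate: one must verify that the logarithmic factor $(\log n)^r$ is not inflated when combining the two halves, which is precisely what Remark~4.2 guarantees, and that the series $\sum_k (\log k)^r/k^{(m+3)/2}$ converges, which uses $m \ge 0$. The orthogonality check is essentially formal once absolute convergence is in place.
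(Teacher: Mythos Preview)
Your proof is correct. The decay estimate and norm bound are argued exactly as in the paper (split at $\lfloor n/2\rfloor$, invoke Remark~\ref{rem:norm}, and use that $\sum_k(\log k)^r/k^{(m+3)/2}$ converges for $m\ge0$).

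The orthogonality step, however, differs from the paper's argument. You verify condition~(ii) in one stroke: after establishing absolute convergence of $\sum_{i,j}h_ig_jP(i+j)$, you expand $P(i+j)$ via the binomial theorem and observe that each resulting product $\bigl(\sum_i h_i i^a\bigr)\bigl(\sum_j g_j j^b\bigr)$ has $a,b\le\lfloor m/2\rfloor$, so either factor (in fact both) vanishes by condition~(ii) for $H$ and $G$. The paper instead proves only the base case $\sum_n u_n=0$ directly (via $\lim_{s\to1}H(s)G(s)=0$) and then proceeds by induction on $m$: for $m\ge2$ it uses Lemma~\ref{dlemma} to pass to $\frac{d}{ds}(HG)=H\,\frac{dG}{ds}+G\,\frac{dH}{ds}\in\cH_{m-2}^r$ by the induction hypothesis, and then integrates back. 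Your direct argument is more elementary and self-contained, avoiding the dependence on Lemma~\ref{dlemma}; the paper's inductive route is perhaps more in keeping with the recursive structure used throughout Section~\ref{sec:algebra}, but both are entirely valid.
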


\begin{proof}
Set $ M^H_\varepsilon=|H|_{\cH_m^r}+\varepsilon$,
$M^G_\varepsilon=|G|_{\cH_m^r}+\varepsilon$ for some $\varepsilon > 0$ and define
$$
u_n:=[s^n](HG)(s)=\sum_{k=0}^n h_k g_{n-k},\quad n\ge0.
$$
Obviously, 
$|u_0| = |h_0|\cdot|g_0| \le  M^H_\varepsilon M^G_\varepsilon$ and
$|u_1| = |h_0 g_1 + h_1 g_0| \le 2 M^H_\varepsilon M^G_\varepsilon$.
Furthermore, by the definition of the norm $|\cdot|_{\cH_m^r}$ and due to Remark \ref{rem:norm}, for $n\ge 2$ one has
\begin{align*}
|u_n|
&\le \sum_{k=0}^n |h_k| |g_{n-k}| 
    =\sum_{k=0}^{\lfloor n/2\rfloor } |h_k| |g_{n-k}| 
    +\sum_{k = \lfloor n/2\rfloor + 1}^{n} |h_{k}| |g_{n-k}|\\ 
&\le  \left(\max_{k > \lf n/2 \rf} \frac{\log^r k}{k^{(m+3)/2}} \right)
    \left( M^G_\varepsilon\sum_{k=0}^{\lfloor n/2\rfloor } |h_{k}| + M^H_\varepsilon \sum_{k = 0}^{n - \lfloor n/2\rfloor} |g_{k}|\right)
    \\
&\le  L(m,r) \frac{\log^r n}{n^{(m+3)/2}} 
    \left( M^G_\varepsilon\sum_{k=0}^{\infty} |h_{k}| + M^H_\varepsilon\sum_{k = 0}^{\infty} |g_{k}|\right).
\end{align*}
Furthermore,
\begin{align*}
M^G_\varepsilon\sum_{k=0}^{\infty} |h_{k}|
+M^H_\varepsilon \sum_{k = 0}^{\infty} |g_{k}| 
    \le  2M^H_\varepsilon M^G_\varepsilon
    \left(2+ \sum_{n=2}^\infty
\frac{\log^r n}{n^{(m+3)/2}}\right).
\end{align*}
This leads to the bound
\begin{align}
\label{eq:n>=2}
|u_n| \le 2 M^H_\varepsilon M^G_\varepsilon L(m,r) \frac{\log^r n}{n^{(m+3)/2}} \left(2+ \sum_{n=2}^\infty
\frac{\log^r n}{n^{(m+3)/2}}\right), \quad n\ge 2.
\end{align}
Combining \eqref{eq:n>=2} with the bounds
$$
|u_0|\le M^H_\varepsilon M^G_\varepsilon,\quad 
|u_1|  \le 2 M^H_\varepsilon M^G_\varepsilon
$$
and letting $\varepsilon \to 0$ we obtain \eqref{1410} with 
$$
C(m,r):=2 L(m,r)\left( 2+ \sum_{n=2}^\infty
\frac{\log^r n}{n^{(m+3)/2}}\right).
$$

To finish the proof that $(HG)(s)$  belongs to $\cH_m^r$ we are left to show the "orthogonality to polynomials" property.
We have already shown that the asymptotically $u_n$ is $O\left(\frac{\log^r n}{n^{(m+3)/2}}\right)$, and, in particular, $\sum_{n=0}^\infty u_n$
is finite. Therefore,
\begin{align} \label{eq:u_nzerosum}
\sum_{n=0}^\infty u_n=\lim_{s\to1}(HG)(s)
=\lim_{s\to1}H(s)\lim_{s\to1}G(s)=0,
\end{align}
where the latter equality follows from the assumption that 
$G(s),H(s)\in\cH_m^r$ with $m\ge0$. Thus, the proof is finished for 
$m=0,1$. 

In the case $m\ge2$ we shall use the induction. Assume that the lemma is valid for $\cH_k^r$ with $k\le m-1$. Then, by Lemma~\ref{dlemma} and by the induction assumption,
\begin{equation*}
    H(s)\dG(s),\ G(s)\dH(s)\in \cH_{m-2}^r.
\end{equation*}
Therefore,
\begin{equation*}
    \frac{d(HG)}{d s}(s) = H(s)\dG(s) +  G(s)\dH(s)\in \cH_{m-2}^r.
\end{equation*}
Finally, due to \eqref{eq:u_nzerosum} we can apply Lemma \ref{dlemma} once again, we conclude that
\begin{equation*}
H(s)G(s) \in \cH_m^r.
\end{equation*}
Thus, the proof is finished.
\end{proof}
\begin{corollary}\label{mult}
Assume that $H \in \cH_{m}^r$ with some $m\ge 0$. Then, for every $k\ge2$, $H^k \in \cH_m^r$ and
\begin{equation*}
    |H^k(s)|_{\cH_m^r} \le C^{k-1}(m,r) \left(|H(s)|_{\cH_m^r}\right)^{k}.
\end{equation*}
\end{corollary}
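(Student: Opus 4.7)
The statement is a direct induction on $k$ built on top of Lemma \ref{mlemma}, so the proof will be essentially routine and the ``main obstacle'' is merely to set up the bookkeeping correctly.

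The plan is to proceed by induction on $k \ge 2$. For the base case $k = 2$, I simply apply Lemma \ref{mlemma} to $H \cdot H$: this gives $H^2 \in \cH_m^r$ together with
\begin{equation*}
|H^2|_{\cH_m^r} \;\le\; C(m,r)\,|H|_{\cH_m^r}^{2} \;=\; C^{2-1}(m,r)\,|H|_{\cH_m^r}^{2},
\end{equation*}
which matches the claim for $k = 2$.

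For the inductive step, assume the corollary holds for some $k \ge 2$. Write $H^{k+1} = H^k \cdot H$. By the induction hypothesis, $H^k \in \cH_m^r$, so Lemma \ref{mlemma} applied to the product $H^k \cdot H$ again yields $H^{k+1} \in \cH_m^r$ and
\begin{equation*}
|H^{k+1}|_{\cH_m^r} \;\le\; C(m,r)\,|H^k|_{\cH_m^r}\,|H|_{\cH_m^r} \;\le\; C(m,r)\cdot C^{k-1}(m,r)\,|H|_{\cH_m^r}^{k}\cdot|H|_{\cH_m^r},
\end{equation*}
which simplifies to $C^{k}(m,r)\,|H|_{\cH_m^r}^{k+1}$, as required. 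This closes the induction.

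There is really no hard part: the closedness of $\cH_m^r$ under products and the multiplicative norm bound are exactly what Lemma \ref{mlemma} provides, and the only thing to notice is that the constant $C(m,r)$ accumulates multiplicatively, giving the factor $C^{k-1}(m,r)$ after $k-1$ applications of the binary bound. If one prefers a non-inductive phrasing, one may instead iterate the inequality $|H \cdot G|_{\cH_m^r} \le C(m,r)|H|_{\cH_m^r}|G|_{\cH_m^r}$ directly along the decomposition $H^k = H \cdot H \cdots H$ ($k$ factors), which uses the binary bound exactly $k-1$ times and produces the same constant $C^{k-1}(m,r)$.
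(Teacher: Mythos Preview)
Your proof is correct and matches the paper's approach: the paper states this as an immediate corollary of Lemma~\ref{mlemma} without giving a separate proof, and your induction on $k$ is exactly the intended argument.
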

\begin{lemma} \label{explemma}
Assume that $H \in \cH_m^r$ with some $m\ge 0$. For every sequence
$\{p_k\}$,
satisfying
\begin{equation}
\label{eq:p_assump} 
\sum_{k=1}^\infty |p_k|R^k<\infty
\quad\text{for some}\quad R>C(m,r)|H(s)|_{\cH_m^r},
\end{equation}
one has
\begin{equation*}
\sum_{k=1}^\infty p_k H^k(s) \in \cH_m^r.
\end{equation*}
\end{lemma}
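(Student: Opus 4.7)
The plan is to use Corollary \ref{mult} to control each power $H^k$ individually and then sum. By Corollary \ref{mult},
$$
\bigl|H^k\bigr|_{\cH_m^r}\le C(m,r)^{k-1}\,|H|_{\cH_m^r}^{k}
=\frac{1}{C(m,r)}\bigl(C(m,r)|H|_{\cH_m^r}\bigr)^{k}
\le \frac{1}{C(m,r)}\,R^{k},
$$
since $R>C(m,r)|H|_{\cH_m^r}$ by \eqref{eq:p_assump}. In particular, writing $h_n^{(k)}:=[s^n]H^k(s)$, one has $|h_0^{(k)}|,|h_1^{(k)}|\le R^k/C(m,r)$ and $|h_n^{(k)}|\le R^k\log^r n/(C(m,r) n^{(m+3)/2})$ for $n\ge 2$.

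Next, I would define the candidate coefficients $f_n:=\sum_{k=1}^{\infty} p_k h_n^{(k)}$. The above bounds combined with the hypothesis $\sum_{k\ge 1}|p_k|R^k<\infty$ show that the series defining $f_n$ is absolutely convergent and yields
$$
|f_n|\le \frac{\log^r n}{n^{(m+3)/2}}\cdot\frac{1}{C(m,r)}\sum_{k=1}^\infty |p_k|R^k,\qquad n\ge 2,
$$
with the analogous uniform bound at $n=0,1$. This immediately gives the first item of Definition \ref{def.1} and shows that $F(s):=\sum_{n\ge 0}f_n s^n$ is well defined on $[0,1)$, coinciding (by absolute convergence) with $\sum_{k\ge 1}p_k H^k(s)$.

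To verify the orthogonality condition, fix a polynomial $P$ with $\deg P\le \lfloor m/2\rfloor$. I would interchange the order of summation in
$$
\sum_{n=0}^\infty f_n\,P(n)=\sum_{n=0}^\infty\sum_{k=1}^\infty p_k h_n^{(k)} P(n)
$$
and invoke the orthogonality $\sum_{n}h_n^{(k)}P(n)=0$ for each $k$, which holds because $H^k\in\cH_m^r$ by Corollary \ref{mult}. To justify Fubini one notes that $|P(n)|\le C_P(1+n)^{\lfloor m/2\rfloor}$, so the double series is bounded by
$$
C_P\sum_{k=1}^\infty |p_k|\cdot\frac{R^k}{C(m,r)}\Bigl(2+\sum_{n\ge 2}\frac{\log^r n}{n^{3/2}}\Bigr)<\infty,
$$
since $\lfloor m/2\rfloor-(m+3)/2\le -3/2$. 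Hence $\sum_n f_n P(n)=0$, and $F\in\cH_m^r$.

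The only mildly delicate point is the uniform bound used above, namely that $|H^k|_{\cH_m^r}$ grows only geometrically; this is exactly what Corollary \ref{mult} provides, so the argument is essentially linear once that ingredient is in hand. No further obstacles are anticipated.
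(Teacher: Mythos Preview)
Your proposal is correct and follows essentially the same approach as the paper: both proofs use Corollary~\ref{mult} to obtain the geometric bound $|H^k|_{\cH_m^r}\le C(m,r)^{-1}R^k$, deduce the coefficient estimates for $f_n=\sum_k p_k h_n^{(k)}$, and justify the orthogonality via Fubini using the orthogonality of each $H^k$. The paper phrases the Fubini condition as $\sum_k|p_k|\sum_n|h_n^{(k)}|n^{m/2}<\infty$, which is exactly your bound with $P(n)=n^{\lfloor m/2\rfloor}$.
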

\begin{proof}
Let us first show that 
\begin{equation}
\label{changecondition}
\sum_{k=1}^\infty |p_k| \sum_{n=0}^\infty |h_n^{(k)}| n^{m/2} < \infty,
\end{equation}
where
\begin{equation*}
h_n^{(k)} := [s^n] H^k(s),\quad n\ge0,\ k\ge1.
\end{equation*}
Fix some $\varepsilon>0$ such that $M:=|H(s)|_{\cH_m^r}+\varepsilon < R$.
By Corollary \ref{mult}, for every $k\ge1$ one has the estimates
\begin{align}\label{eq:hnk_bound1}
|h_0^{(k)}|, |h_1^{(k)}| \le C^{k-1}(m,r)M^k
\end{align}
and
\begin{align}
\label{eq:hnk_bound2}
|h_n^{(k)}| \le C^{k-1}(m,r)M^k 
\frac{\log^r n}{n^{(m+3)/2}},\ n\ge 2.
\end{align}
Hence
\begin{align*}
\sum_{n=0}^\infty |h_n^{(k)}| n^{m/2} 
&\le {C^{k-1}(m,k)}M^k \left(2+
\sum_{n=2}^\infty \frac{\log^r n}{n^{(m+3)/2}}n^{m/2}
\right)\\
&=  {C^{k-1}(m,r)}M^k \left( 2 + \sum_{n=2}^\infty \frac{\log^r n}{n^{3/2}} \right).
\end{align*}
Letting $A_r := 2 + \sum_{n=2}^\infty \log^r n/ n^{3/2}$, we have
\begin{align*}
\sum_{k=1}^\infty |p_k| \sum_{n=0}^\infty |h_n^{(k)}| n^{m/2} 
&\le \frac{A_r}{C(m,r)}\sum_{k=1}^\infty |p_k| (C(m,r)M)^k\\
&\le \frac{A_r}{C(m,r)}\sum_{k=1}^\infty |p_k| R^k < \infty.
\end{align*}
Thus, \eqref{changecondition} is valid. In particular we have 
\begin{equation*}
\sum_{k=1}^\infty |p_k| \sum_{n=0}^\infty |h_n^{(k)}| < \infty.
\end{equation*}
This leads, by applying the Fubini theorem, to the equality
\begin{equation*}
    \sum_{k=1}^\infty p_k H^k(s) 
    =
    \sum_{n=0}^\infty s^n 
    \left( \sum_{k=1}^\infty p_k h_n^{(k)}\right).
\end{equation*}
Combining \eqref{eq:p_assump} and \eqref{eq:hnk_bound2}, we get, for $n\ge 2$,
\begin{align*}
\left|\sum_{k=1}^\infty p_k h_n^{(k)}\right| 
&\le\frac{\log^r n}{n^{(m+3)/2}} 
\sum_{k=1}^\infty {C^{k-1}(m,k)}M^k|p_k| 
\\ &\le \frac{\sum_{k=1}^\infty|p_k|R^k}{C(m,r)} \cdot \frac{\log^r n}{n^{(m+3)/2}} = O\left( \frac{\log^r n}{n^{(m+3)/2}}\right).
\end{align*}
Thus, the condition (i) in Definition~\ref{def.1} is satisfied.

Next, by Corollary~\ref{mult}, $H^k(s)\in\cH_m^r$ for every $k\ge1$.
Therefore, for every polynomial $P(n)$ of degree
$\le \lfloor m/2\rfloor$ we have
\begin{equation*}
\sum_{n=0}^\infty h_n^{(k)} P(n) = 0,\quad\text{for all } k\ge1.
\end{equation*}
Then, in view of \eqref{changecondition},
\begin{equation*}
    \sum_{n=0}^\infty P(n) \left( \sum_{k=1}^\infty p_k h_n^{(k)}\right) 
    =
    \sum_{k=1}^\infty p_k   \left(\sum_{n=0}^\infty P(n)h_n^{(k)}\right) =0.
\end{equation*}
This completes the proof of the lemma.
\end{proof}

Next we prove some results concerning the functions $(1-s)^m$, which shall play an important role in the asymptotic expansions for conditioned random walks.
\begin{lemma}
\label{1-slemma}
For every $k\ge1$ one has
$$
(1-s)^{k-1/2} \in \cH^0_{2k-2}.
$$
\end{lemma}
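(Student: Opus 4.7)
The plan is to proceed by induction on $k$, using Lemma \ref{dlemma} as the key mechanism. The starting observation is that by \eqref{eq:anser} the coefficients of $(1-s)^{k-1/2}=(1-s)^{(k+1)-3/2}$ are exactly the sequence $a_n^{(k+1)}$, so the statement reduces to two claims about this sequence: first, the decay estimate $|a_n^{(k+1)}|=O(n^{-(2k+1)/2})$, and second, orthogonality $\sum_n a_n^{(k+1)}P(n)=0$ for every polynomial $P$ of degree $\le k-1$.

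\textbf{Base case $k=1$.} Here $m=2k-2=0$, so the decay required is $O(n^{-3/2})$, which follows at once from the expansion \eqref{eq:anasympdecomp} applied to $j=2$. The orthogonality condition with $\lfloor 0/2\rfloor=0$ asks only that $\sum_n a_n^{(2)}=0$; this is immediate from absolute summability and Abel's theorem applied to $(1-s)^{1/2}$ at $s=1$.

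\textbf{Inductive step.} Assume $(1-s)^{k-1/2}\in\cH_{2k-2}^0$ and consider $H(s):=(1-s)^{k+1/2}$. Then $\lim_{s\to 1}H(s)=0$, and
\begin{equation*}
\dH(s) \;=\; -\left(k+\tfrac12\right)(1-s)^{k-1/2}\;\in\;\cH_{2k-2}^0
\end{equation*}
by the induction hypothesis, since $\cH_{2k-2}^0$ is closed under scalar multiplication. Since $m=2k\ge 2\ge 1$, Lemma \ref{dlemma} applies in the direction (2)$\Rightarrow$(1), giving $H\in\cH_{2k}^0=\cH_{2(k+1)-2}^0$, which is the statement of the lemma for $k+1$.

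I do not anticipate a real obstacle: the argument is essentially a bookkeeping application of Lemma \ref{dlemma}. The only detail worth checking carefully is the matching of indices $m\mapsto m+2$ under the inverse operation of differentiation, namely that $2(k+1)-2=2k$ is consistent with the shift provided by Lemma \ref{dlemma}; this is routine. (A direct, non-inductive verification is also available: using $(s\tfrac{d}{ds})^j$, the sum $\sum_n n^j a_n^{(k+1)}$ equals an Abel limit of a linear combination of derivatives $f^{(\ell)}(1)$ of $f(s)=(1-s)^{k-1/2}$ with $\ell\le j\le k-1$, each of which vanishes because the exponent $k-1/2-\ell$ is strictly positive. This could be included as a cross-check of the inductive proof.)
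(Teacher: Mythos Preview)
Your proof is correct and follows essentially the same approach as the paper: the base case $k=1$ is handled via \eqref{eq:anasympdecomp} for the decay and Abel summation for the zero-sum condition, and the inductive step invokes Lemma~\ref{dlemma} using $\frac{d}{ds}(1-s)^{k+1/2}=-(k+\tfrac12)(1-s)^{k-1/2}$ together with $\lim_{s\to1}(1-s)^{k+1/2}=0$. The paper's proof is identical in structure, just slightly more terse.
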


\begin{proof}
In the case $k=1$, by definition of $a_n^{(2)}$ and by \eqref{eq:anasympdecomp} one has
\begin{align*}
    a^{(2)}_n=[s^n](1-s)^{1/2}=O(n^{-3/2})
\end{align*}
and
$$
\sum_{n=0}^\infty a^{(2)}_n=\lim_{s\to1}(1-s)^{1/2}=0.
$$
Thus, $(1-s)^{1/2}\in \cH_{0}^0$.
Noting that
$$
\frac{d}{ds}(1-s)^{k-1/2}=(k-1/2)(1-s)^{(k-1)-1/2}
$$
and applying Lemma~\ref{dlemma}, one gets the desired result
by induction.
\end{proof}
\begin{lemma}
\label{lem:hlemma_1}
If $H\in \cH_m^r$ with some $m \ge 1$ then
$$
\frac{H(s)}{(1-s)} \in \cH_{m-2}^r
$$    
and
$$
\left|\frac{H(s)}{1-s}\right|_{\cH_{m-2}^r}
\le C\left|H(s)\right|_{\cH_m^r}
$$
for some constant $C$ which depends on $m$ and $r$ only.
\end{lemma}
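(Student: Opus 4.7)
The plan is to compute $[s^n]\bigl(H(s)/(1-s)\bigr)$ directly and verify both conditions of Definition~\ref{def.1} for the class $\cH_{m-2}^r$. Setting $G(s) := H(s)/(1-s) = \sum_{n\ge0} g_n s^n$, standard multiplication of power series gives $g_n = \sum_{k=0}^n h_k$. Since $m\ge 1$, the orthogonality condition for $\cH_m^r$ applied to the constant polynomial (which is admissible because $\lfloor m/2\rfloor \ge 0$) yields $\sum_{k=0}^\infty h_k = 0$, and so I can rewrite
$$
g_n = -\sum_{k=n+1}^\infty h_k, \qquad n\ge 0.
$$

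For the decay condition in Definition~\ref{def.1}, the plan is to bound $|g_n| \le \sum_{k>n} |h_k|$ via the defining estimate $|h_k|\le |H|_{\cH_m^r}\log^r k/k^{(m+3)/2}$ and a standard integral comparison. This gives $|g_n| \le C(m,r)\,|H|_{\cH_m^r}\,\log^r n/n^{(m+1)/2}$ for $n\ge 2$, and $(m+1)/2 = ((m-2)+3)/2$ is exactly the exponent required for $\cH_{m-2}^r$. The cases $n=0,1$ are handled by the trivial bound $|g_n|\le |h_0|+|h_1|\le 2|H|_{\cH_m^r}$, and the norm inequality $|G|_{\cH_{m-2}^r} \le C|H|_{\cH_m^r}$ falls out by tracking the constants.

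For the orthogonality of $\{g_n\}$ to polynomials of degree at most $\lfloor (m-2)/2\rfloor$, I would use the fact that if $P$ has degree $d$, then $Q(k):=\sum_{n=0}^{k-1}P(n)$ is a polynomial of degree $d+1$. Fubini's theorem, justified by the absolute convergence of the double sum (which holds since $d\le\lfloor(m-2)/2\rfloor$ together with the tail estimate on $h_k$ gives a summand of order at most $n^{-3/2}\log^r n$), yields
$$
\sum_{n=0}^\infty g_n P(n) = -\sum_{n=0}^\infty P(n)\sum_{k=n+1}^\infty h_k = -\sum_{k=1}^\infty h_k\, Q(k),
$$
and this last quantity vanishes by the $\cH_m^r$-orthogonality of $\{h_k\}$, provided $d+1\le \lfloor m/2\rfloor$. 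A short parity check shows that $d\le\lfloor (m-2)/2\rfloor$ indeed implies $d+1\le\lfloor m/2\rfloor$ in both cases $m$ even and $m$ odd.

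The main bookkeeping obstacle I anticipate is this last degree count across the parity cases, since the two floor functions must align correctly; everything else reduces to routine tail-of-series estimates. Note also that the alternative route via Lemma~\ref{dlemma} (differentiating $G$) would only work for $m\ge 3$, so the direct coefficient approach is preferable as it covers uniformly all $m\ge 1$, including the boundary cases $m=1,2$ where $\cH_{m-2}^r$ is $\cH_{-1}^r$ or $\cH_0^r$ and the orthogonality condition becomes vacuous or reduces to the single identity $\sum g_n = 0$.
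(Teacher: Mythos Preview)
Your proof is correct and follows essentially the same approach as the paper: write $g_n=\sum_{k\le n}h_k=-\sum_{k>n}h_k$, bound the tail to get the decay estimate, and swap the order of summation to reduce orthogonality of $\{g_n\}$ to polynomials of degree $\le\lfloor(m-2)/2\rfloor$ to orthogonality of $\{h_k\}$ to the higher-degree polynomial $Q(k)=\sum_{n<k}P(n)$. Your additional remarks on the Fubini justification, the degree count, and the boundary cases $m=1,2$ are all correct and slightly more explicit than the paper's version.
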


\begin{proof}
By the definition of $\cH_m^r$, $\sum_{n=0}^\infty h_n=0$
and
$$
|h_0|\le M, |h_1|\le M\quad\text{and}\quad
|h_n|\le M\frac{\log^r n}{n^{(m+3)/2}},\quad n\ge2
$$
for every constant $M>\left|H(s)\right|_{\cH_m^r}$.
Then
\begin{align*}
\left|[s^0] \frac{H(s)}{(1-s)}\right|=|h_0|\le M
\end{align*}
and, for every $n\ge1$,
\begin{align*}
\left|[s^n] \frac{H(s)}{(1-s)}\right|
&=\left|\sum_{k=0}^n h_k \right|
= \left|-\sum_{k=n+1}^\infty h_k\right|\\
&\le M\sum_{k=n+1}^\infty\frac{\log^r k}{k^{(m+3)/2}}
\le C M\frac{\log^{r} n}{n^{((m-2)+3)/2}}.
\end{align*}
This gives already the desired relation for the norms.
To complete the proof, it remains to show that 
\begin{equation} \label{1646}
    \sum_{n=0}^\infty \left(\sum_{k=0}^n h_n\right) P(n) = 0
\end{equation}
for every polynomial $P$ of degree $\le \lfloor \frac{m-2}{2}\rfloor$.

Changing the order of summation, we get 
\begin{align*}
\sum_{n=0}^\infty\left(\sum_{k=0}^n h_k\right)P(n)
=-\sum_{n=0}^\infty\left(\sum_{k=n+1}^\infty h_k\right)P(n)
=-\sum_{k=1}^\infty h_k\sum_{n=0}^{k-1}P(n) = 0.
\end{align*}
The last equality holds since $Q(k):=\sum_{n=0}^{k-1}P(n)$ is a polynomial of order
$\le \lfloor \frac{m}{2} \rfloor$ and $h_n$ is orthogonal to it. Hence we obtain \eqref{1646}.
\end{proof} 
\begin{lemma}
\label{lem:hlemma_2}
If $H(s)\in\cH_m^r$ for some $m\ge0$ then 
$$
H(s)(1-s)^{k/2}\in\cH_m^r\quad\text{for every }k\ge1.
$$
Moreover, there exists $C$ independent of $H$ such that 
$$
\left|H(s)(1-s)^{k/2}\right|_{\cH_m^r}
\le C\left|H(s)\right|_{\cH_m^r}
$$
\end{lemma}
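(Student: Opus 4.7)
My plan is to reduce to the case $k = 1$ and then induct on $m$. Writing $k = 2\ell + \epsilon$ with $\epsilon \in \{0,1\}$, we have $(1-s)^{k/2} = (1-s)^\ell (1-s)^{\epsilon/2}$, and the polynomial factor $(1-s)^\ell$ sends a coefficient sequence $\{h_n\}$ to $\{h_n - h_{n-1}\}$ under each multiplication by $(1-s)$. This preserves the decay bound at the cost of a constant factor, and summation by parts
\[
\sum_n (h_n - h_{n-1}) P(n) = \sum_n h_n\bigl(P(n) - P(n+1)\bigr)
\]
preserves orthogonality to polynomials of degree $\le \lfloor m/2 \rfloor$, since the discrete difference of such a polynomial has strictly smaller degree. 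Iterating, $H(s)(1-s)^\ell \in \cH_m^r$ with the claimed norm control, so it suffices to prove the statement for $k = 1$.

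For $k = 1$, set $G(s) := H(s)(1-s)^{1/2}$ and induct on $m$. When $m = 0$, Lemma \ref{1-slemma} gives $(1-s)^{1/2} \in \cH_0^0 \subseteq \cH_0^r$, and Lemma \ref{mlemma} delivers $G \in \cH_0^r$ with the desired norm bound. When $m = 1$ I would verify the claim directly: for $g_n = \sum_{k=0}^n h_k a_{n-k}^{(2)}$ split at $k = \lfloor n/2 \rfloor$, the contribution from $k > n/2$ is trivially $O(\log^r n/n^2)$ via $|a_j^{(2)}| \le Cj^{-3/2}$; for $k \le n/2$, decompose $a_{n-k}^{(2)} = a_n^{(2)} + (a_{n-k}^{(2)} - a_n^{(2)})$, using $\sum_k h_k = 0$ for the first piece (so that $\sum_{k \le n/2} h_k = -\sum_{k > n/2} h_k = O(\log^r n/n)$) and Lemma \ref{lem:an}(ii) for the second, yielding $O(\log^{r+1}n/n^{5/2}) = o(\log^r n/n^2)$. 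The orthogonality $\sum g_n = 0$ follows from $\lim_{s \to 1} G(s) = H(1)\cdot 0 = 0$. For the inductive step $m \ge 2$, apply Lemma \ref{dlemma}: since $G(1) = 0$ is immediate, it remains to show $dG/ds \in \cH_{m-2}^r$. Now
\[
\frac{dG}{ds} = H'(s)(1-s)^{1/2} - \frac{1}{2}\cdot\frac{H(s)}{1-s}\cdot(1-s)^{1/2},
\]
and Lemma \ref{dlemma} gives $H' \in \cH_{m-2}^r$ while Lemma \ref{lem:hlemma_1} gives $H/(1-s) \in \cH_{m-2}^r$; the inductive hypothesis applied to each of these factors (multiplied by $(1-s)^{1/2}$) places both summands in $\cH_{m-2}^r$, so $dG/ds \in \cH_{m-2}^r$ and Lemma \ref{dlemma} concludes.

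The main obstacle I anticipate is the $m = 1$ base case: here Lemma \ref{mlemma} does not directly apply, because $(1-s)^{1/2}$ lives only in $\cH_0^0$, not in $\cH_1^r$, so the argument must exploit by hand the orthogonality $\sum h_k = 0$ together with the pointwise estimate of $a_{n-k}^{(2)} - a_n^{(2)}$ from Lemma \ref{lem:an}. All norm bounds propagate through the chain of auxiliary lemmas with constants depending only on $m$ and $r$, which yields the stated uniform control $|H(s)(1-s)^{k/2}|_{\cH_m^r} \le C|H(s)|_{\cH_m^r}$.
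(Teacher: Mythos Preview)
Your proof is correct and follows essentially the same approach as the paper: reduce to $k=1$, handle $m=0$ via Lemma~\ref{mlemma}, treat $m=1$ by the direct splitting at $\lfloor n/2\rfloor$ using $\sum h_k=0$ and Lemma~\ref{lem:an}(ii), and then induct on $m\ge 2$ via the identity $\frac{d}{ds}\bigl(H(s)(1-s)^{1/2}\bigr)=H'(s)(1-s)^{1/2}-\tfrac12\frac{H(s)}{1-s}(1-s)^{1/2}$ together with Lemmas~\ref{dlemma} and~\ref{lem:hlemma_1}. The only cosmetic difference is in the reduction step: the paper simply iterates the $k=1$ case (since $H(1-s)^{1/2}\in\cH_m^r$ implies $H(1-s)\in\cH_m^r$, etc.), whereas you peel off the integer power $(1-s)^\ell$ explicitly via discrete differences before applying the half-integer case once.
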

\begin{proof}
Obviously, it suffices to consider only the case $k=1$.

Lemma \ref{1-slemma} says that $(1-s)^{1/2} \in \cH^r_0, \; k\ge 1$. If $m=0$ then, by Lemma \ref{mlemma}, $H(s)(1-s)^{1/2}$ also belongs to $\cH_0^r$ and 
$\left|H(s)(1-s)^{1/2}\right|_{\cH_0^r}
\le C\left|H(s)\right|_{\cH_0^r}.$

Consider now the case $m=1$. We first notice that 
\begin{align*}
[s^n]H(s)(1-s)^{1/2}
&=\sum_{k=0}^n h_k a^{(2)}_{n-k}\\
&=\sum_{k=0}^{\lfloor n/2 \rfloor } h_k a^{(2)}_{n-k}
+\sum_{k=\lfloor n/2 \rfloor + 1}^n h_k a^{(2)}_{n-k}.
\end{align*}
For the second sum we have 
\begin{align*}
\left|\sum_{k=\lfloor n/2 \rfloor +1}^n h_k a^{(2)}_{n-k}\right|
\le \max_{k\ge \lfloor n/2 \rfloor +1}|h_k| \sum_{k=0}^\infty |a^{(2)}_{k}|.
\end{align*}
Since $|a_k^{(2)}| = O(n^{-3/2})$, $\sum_{k=1}^\infty|a_k^{(2)}|<\infty$. Furthermore,
the assumption $H\in\cH_1^r$ implies that $|h_0|\le M$, $|h_1|\le M$ and $|h_k|\le M\frac{\log^rk}{k^2}$, $k\ge 2$ for every $M>|H|_{\cH_1^r}$. As a result,
\begin{align}
\label{eq:lem10.1}
\left|\sum_{k=\lfloor n/2 \rfloor +1}^n h_k a^{(2)}_{n-k}\right| \le C M\frac{\log^r n}{n^2}.
\end{align}
The assumption $H\in\cH_1^r$ implies also that $\sum_{k=0}^\infty h_k=0$.
Using this equality, we get
\begin{align*}
\sum_{k=0}^{\lfloor n/2 \rfloor} h_k a^{(2)}_{n-k}
&=a^{(2)}_{n}\sum_{k=0}^{\lfloor n/2 \rfloor} h_k
+\sum_{k=0}^{\lfloor n/2 \rfloor} h_k \left(a^{(2)}_{n-k}-a^{(2)}_{n}\right)\\
&=-a^{(2)}_{n}\sum_{k=\lfloor n/2 \rfloor + 1}^\infty h_k
+\sum_{k=0}^{\lfloor n/2 \rfloor} h_k \left(a^{(2)}_{n-k}-a^{(2)}_{n}\right).
\end{align*}
The first term on the right hand side is also bounded by  
$C M\frac{\log^r n}{n^{5/2}}$ since $a_n^{(2)} = O(n^{-3/2})$ and $|h_n| \le M\frac{\log^r n}{n^{2}}$.
By the property (\textit{ii}) from Lemma \ref{lem:an}, for $1\le k\le n/2$ one has the bound
$$
|a^{(2)}_{n-k}-a^{(2)}_{n}|\le c_2 k n^{-5/2}.
$$
Hence
\begin{align*}
\left|\sum_{k=0}^{\lfloor n/2 \rfloor} h_k \left(a^{(2)}_{n-k}-a^{(2)}_{n}\right) \right|
\le c_2 n^{-5/2}\sum_{k=0}^{\lfloor n/2 \rfloor} k|h_k| 
\le CM\frac{\log^{r+1}n}{n^{5/2}}.
\end{align*}
So finally
\begin{equation}
\label{eq:lem10.2}
\left|\sum_{k=0}^{\lfloor n/2 \rfloor} h_k a^{(2)}_{n-k}\right|
\le CM\frac{\log^{r+1} n}{n^{5/2}}.
\end{equation}
Combining \eqref{eq:lem10.1} and \eqref{eq:lem10.2}, we have
$$ 
\left|[s^n]H(s)(1-s)^{1/2}\right|
\le CM\frac{\log^r n}{n^{2}},\quad n\ge 2.
$$
Combining this estimate with $\lim_{s\to 1}H(s)(1-s)^{1/2}=0$, we conclude that $H(s)(1-s)^{1/2}\in\cH_1^r$
and obtain also the inequality for the norms.

\medskip
The cases $m=0$ and $m=1$ can now be used as the basis for the following
induction argument. By Lemmas~\ref{dlemma} and \ref{lem:hlemma_1},
$\dH$ and $H(s)/(1-s)$ belong to $\cH_{m-2}^r$. Then, by the induction assumption,
\begin{align*}
    \dH(s)(1-s)^{1/2}-\frac{1}{2}\frac{H(s)}{(1-s)}(1-s)^{1/2}\in\cH_{m-2}^r.
\end{align*}
Furthermore,
\begin{equation}
\label{eq:norm-ineq}
\left|\dH(s)(1-s)^{1/2}-\frac{1}{2}\frac{H(s)}{(1-s)}(1-s)^{1/2}\right|_{\cH_{m-2}^r}
\le C\left|H(s)\right|_{\cH_m^r}.
\end{equation}
Noting that the expression on the left hand side equals $\frac{d}{ds}\left( H(s)(1-s)^{1/2}\right)$, we have
\begin{equation*} 
\frac{d}{ds}\left( H(s)(1-s)^{1/2}\right) \in\cH_{m-2}^r.
\end{equation*}
Since $\lim\limits_{s\to1} H(s) = 0$, we may apply Lemma~\ref{dlemma} to conclude that $H(s)(1-s)^{1/2}$ also belongs to the class $\cH_m^r$. Thus, it remains to show the inequality for the norms. First, we notice that, for all $n\ge1$, 
\begin{align*}
n[s^n]H(s)(1-s)^{1/2}
&=[s^{n-1}]\frac{d}{ds}\left( H(s)(1-s)^{1/2}\right)\\
&=[s^{n-1}]\left(\dH(s)(1-s)^{1/2}-\frac{1}{2}\frac{H(s)}{(1-s)}(1-s)^{1/2}\right).
\end{align*}
From this equality and from \eqref{eq:norm-ineq} we infer that
\begin{align*}
\left|[s^n]H(s)(1-s)^{1/2}\right|
&\le \left(C\left|H(s)\right|_{\cH_m^r}+\varepsilon\right)
\frac{\log^r n}{n(n-1)^{(m+1)/2}}\\
&\le (3/2)^{(m+1)/2}
\left(C\left|H(s)\right|_{\cH_m^r}+\varepsilon\right)
\frac{\log^r n}{n^{(m+3)/2}},\quad n\ge 3.
\end{align*}
Finally,
\begin{align*}
&\left|[s^0]H(s)(1-s)^{1/2}\right|=|h_0|\le \left|H(s)\right|_{\cH_m^r}+\varepsilon,\\
&\left|[s^1]H(s)(1-s)^{1/2}\right|
=|h_0a_1^{(2)}+h_1|\le 2(\left|H(s)\right|_{\cH_m^r}+\varepsilon),\\
&\left|[s^1]H(s)(1-s)^{1/2}\right|
=|h_0a_2^{(2)}+h_1a_1^{(2)}+h_2|\le 3(\left|H(s)\right|_{\cH_m^r}+\varepsilon).
\end{align*}
Combining these estimates and letting $\varepsilon\to0$, we obtain the desired bound for the norm.
\end{proof}
Next lemma is the crucial part of the theory. For all earlier examples  any multiplications keep parameter $m$ the same, but it happens that under multiplication by $(1-s)^{-1/2}$ parameter $m$ decreases by $1$. At the same time the function $(1-s)^{-1/2}$ arises naturally since $[s^n](1-s)^{-1/2} \sim \frac{1}{\sqrt{\pi n}}$ so we have to deal with this function.

\begin{lemma}
\label{lem:hlemma_3}
If $H(s)\in\cH_m^r$ with some $m\ge0$ then 
$$
\frac{H(s)}{\sqrt{1-s}}\in \cH_{m-1}^{r+\delta_m},
$$
where $\delta_m = \mathds{1}_{\{m \text{ is odd}\}}$.
Moreover, there exists a constant $C$ depending only on $m$ and $r$ such that
$$
\left|[s^n]\frac{H(s)}{\sqrt{1-s}}\right|
\le C|H|_{\cH_m^r}\frac{\log^{r+\delta_m}n}{n^{(m+2)/2}},\quad n\ge2
$$
and 
$$
\left|\frac{H(s)}{\sqrt{1-s}}\right|_{\cH_m^r}
\le C |H|_{\cH_m^r}.
$$
\end{lemma}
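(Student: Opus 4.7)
My proposal is to prove this by induction on $m$. The base cases $m=0$ and $m=1$ require direct convolution estimates; the inductive step $m\ge 2$ reduces to the cases $m-2$ via differentiation and Lemma~\ref{dlemma}.

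For the base cases I would write
$$
[s^n]\frac{H(s)}{\sqrt{1-s}} = \sum_{k=0}^{n} h_k\, a_{n-k}^{(1)}
$$
and split the sum at $k = \lfloor n/2 \rfloor$. For the tail $k > \lfloor n/2 \rfloor$, since $|h_k| \le CM\log^r n/n^{(m+3)/2}$ and $\sum_{j=0}^{\lfloor n/2 \rfloor}|a_j^{(1)}| = O(\sqrt{n})$ (because $|a_j^{(1)}| \le C/\sqrt{j+1}$), the contribution is $O(\log^r n/n^{(m+2)/2})$. For the head $k \le \lfloor n/2 \rfloor$ I would invoke the orthogonality $\sum h_k = 0$ (automatic for $H \in \cH_m^r$ when $m\ge 0$) and rewrite
$$
\sum_{k \le n/2} h_k\, a_{n-k}^{(1)}
= a_n^{(1)}\sum_{k \le n/2} h_k
+ \sum_{k \le n/2} h_k\bigl(a_{n-k}^{(1)} - a_n^{(1)}\bigr),
$$
bounding the first piece by $|\sum_{k > n/2} h_k| = O(\log^r n/n^{(m+1)/2})$ and the second by Lemma~\ref{lem:an}(ii) via $|a_{n-k}^{(1)} - a_n^{(1)}| \le c_1 k/n^{3/2}$. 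When $m = 0$ the residual $\sum_{k \le n/2} k|h_k|$ is $O(\sqrt{n}\log^r n)$, yielding $O(\log^r n/n)$; when $m = 1$ this sum becomes $\sum_{k\le n/2}\log^r k/k = O(\log^{r+1} n)$, producing exactly the extra log factor matching $\delta_1 = 1$. Finally, the orthogonality to constants needed in $\cH_0^{r+1}$ (i.e.\ when $m=1$) follows from $\lim_{s\to 1^-} H(s)/\sqrt{1-s} = 0$ via Abel's theorem; this limit is established by the elementary splitting $H(s) = \sum_{k \ge 1} h_k(s^k - 1)$ together with $|s^k-1| \le \min(1, k(1-s))$ and $\sum h_k = 0$.

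For the inductive step $m \ge 2$, assuming the lemma for $m-2$, I would differentiate
$$
\frac{d}{ds}\left(\frac{H(s)}{\sqrt{1-s}}\right)
= \frac{H'(s)}{\sqrt{1-s}}
+ \frac{1}{2\sqrt{1-s}} \cdot \frac{H(s)}{1-s}.
$$
Lemma~\ref{dlemma} gives $H' \in \cH_{m-2}^r$ and Lemma~\ref{lem:hlemma_1} gives $H/(1-s) \in \cH_{m-2}^r$. Applying the induction hypothesis to each factor and using that $\delta_{m-2} = \delta_m$ (same parity), both summands lie in $\cH_{m-3}^{r+\delta_m}$, so the derivative belongs to $\cH_{(m-1)-2}^{r+\delta_m}$. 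Combined with $\lim_{s\to 1^-} H(s)/\sqrt{1-s} = 0$, which for $m\ge 2$ follows because the partial sums $c_n = -\sum_{k>n}h_k$ satisfy $|c_n| = O(\log^r n/n^{(m+1)/2})$ and thus $H(s)/(1-s) = O(1)$, hence $H(s) = O(1-s)$, Lemma~\ref{dlemma} closes the induction and yields $H/\sqrt{1-s} \in \cH_{m-1}^{r+\delta_m}$. The quantitative norm bound propagates because each auxiliary lemma is itself quantitative.

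The main obstacle I anticipate is the sharp tracking of logarithmic exponents. The delicate part is the base case $m=1$: the proof must yield exactly one extra $\log n$ (matching $\delta_1$) and not more, which is why the orthogonality $\sum h_k = 0$ has to be exploited before applying Lemma~\ref{lem:an}(ii), rather than simply bounding $h_k$ pointwise. The inductive step is comparatively robust because the two input factors $H'$ and $H/(1-s)$ both live in the same class $\cH_{m-2}^r$; the hypothesis is invoked once on each factor, and the exponent $\delta_{m-2} = \delta_m$ is inherited without compounding.
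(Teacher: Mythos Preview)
Your proposal is correct and follows essentially the same route as the paper: induction on $m$ with base cases $m=0,1$ handled by splitting the convolution at $\lfloor n/2\rfloor$, exploiting $\sum h_k=0$ together with Lemma~\ref{lem:an}(ii) to gain the extra order (and the single extra $\log$ when $m=1$), and then for $m\ge2$ differentiating and applying Lemmas~\ref{dlemma} and~\ref{lem:hlemma_1} to reduce to the $m-2$ case. The only cosmetic difference is that you obtain the orthogonality $\sum_n [s^n]\tfrac{H(s)}{\sqrt{1-s}}=0$ via Abel's theorem from the limit $H(s)/\sqrt{1-s}\to0$, whereas the paper computes the partial sums directly; both arguments are short and equivalent.
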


\begin{proof} We shall use the induction over $m$ again. The cases $m=0$ and $m=1$ will serve as a basis for our argument. First of all, we show that for $m=0,1$ one has
\begin{equation}
\label{1800}
\left|[s^n] \frac{H(s)}{\sqrt{1-s}}\right| 
= \left|\sum_{k=0}^n h_k a^{(1)}_{n-k}\right| 
\le C |H|_{\cH_m^r}\frac{\log^{r+\delta_m}n}{n^{(m+2)/2}}.
\end{equation}
Split the sum in \eqref{1800} into two parts:
\begin{equation}
\label{1731}
\sum_{k=0}^n h_k a^{(1)}_{n-k} 
=\sum_{k=0}^{\lfloor \frac{n}{2} \rfloor} h_k a^{(1)}_{n-k}
+\sum_{k=\lfloor \frac{n}{2} \rfloor + 1}^n h_k a^{(1)}_{n-k}.
\end{equation}
Due to the assumption $H(s)\in \cH_m^r$ and Remark \ref{rem:norm},
\begin{equation*}
\max_{k > \lf n/2 \rf }|h_k| 
\le |H(s)|_{\cH_m^r} L(m,r) \frac{\log^r n}{n^{(m+3)/2}}.
 \end{equation*}
Therefore,
\begin{align}
\label{eq:2sum}
\sum_{k=\lfloor \frac{n}{2} \rfloor + 1}^n |h_k| |a^{(1)}_{n-k}| 
&\le |H(s)|_{\cH_m^r}  L(m,r) \frac{\log^r n}{n^{(m+3)/2}}
\sum_{k=\lfloor \frac{n}{2} \rfloor + 1}^n |a^{(1)}_{n-k}|.
\end{align}
We know from \eqref{eq:anasympdecomp} that
\begin{align}\label{eq:a^1}
a_k^{(1)} \sim\frac{1}{\sqrt{\pi k}}.
\end{align}
Consequently,
\begin{align*}
    \sum_{k=\lfloor \frac{n}{2} \rfloor + 1}^n |a^{(1)}_{n-k}| = O(n^{1/2}).
\end{align*}
Combining this with \eqref{eq:2sum}, we conclude that
\begin{equation}
\label{eq:second_sum}
\sum_{k=\lfloor \frac{n}{2} \rfloor + 1}^n |h_k| |a^{(1)}_{n-k}| 
\le C|H|_{\cH_m^r}\frac{\log^r n}{n^{(m+2)/2}}.
\end{equation}
Using the equality 
$\sum\limits_{n=0}^\infty h_n = 0$, we obtain
\begin{align*}
\sum_{k=0}^{\lfloor \frac{n}{2} \rfloor} h_k a^{(1)}_{n-k} 
&=\sum_{k=0}^{\lfloor \frac{n}{2} \rfloor} h_k a^{(1)}_{n-k}
  -a^{(1)}_n\sum_{k=0}^\infty h_k \\
&=\sum_{k=1}^{\lfloor \frac{n}{2} \rfloor} h_k (a^{(1)}_{n-k}-a^{(1)}_n)
  -a^{(1)}_n\sum_{k=\lfloor \frac{n}{2} \rfloor + 1}^\infty h_k.
\end{align*}
The assumption $H\in\cH_m^r$ implies that 
\begin{equation*}
\left|\sum_{k=\lfloor \frac{n}{2} \rfloor + 1}^\infty h_k\right| 
\le C|H|_{\cH_m^r}\frac{\log^r n}{n^{(m+1)/2}}. 
\end{equation*}
Combining this with \eqref{eq:a^1}, we get
\begin{equation*}
\left|a^{(1)}_n \sum_{k=\lfloor \frac{n}{2} \rfloor + 1}^\infty h_k\right|
\le C|H|_{\cH_m^r}\frac{\log^r n}{n^{(m+2)/2}}.
\end{equation*}
From this bound and from \eqref{eq:second_sum} we infer that \eqref{1800} will be proven if we show that
\begin{equation}
\label{eq:remains}
\left|\sum_{k=1}^{\lfloor \frac{n}{2} \rfloor} h_k (a^{(1)}_{n-k}-a^{(1)}_n)\right|
\le C|H|_{\cH_m^r}\frac{\log^r n}{n^{(m+2)/2}}.
\end{equation}
In Lemma \ref{lem:an} we have shown that  
$$
|a^{(1)}_{n-k}-a^{(1)}_n|\le c_1\frac{k}{n^{3/2}},\quad k\le\frac{n}{2}.
$$
Therefore,
$$
\left|
\sum_{k=1}^{\lfloor \frac{n}{2} \rfloor} h_k (a^{(1)}_{n-k}-a^{(1)}_n)\right|
\le Cn^{-3/2}\sum_{k=1}^{\lfloor \frac{n}{2} \rfloor} k|h_k|.
$$
It remains to notice that 
\begin{align*}
\sum_{k=1}^{\lfloor \frac{n}{2} \rfloor} k|h_k|
\le C|H|_{\cH_m^r}\frac{\log^{r+\delta_m} n}{n^{(m-1)/2}}
\end{align*}
for $m=0$ and for $m=1$.
The value $\delta_m$ arises when we sum up the sequence of order $1/n$.
Thus, \eqref{1800} is proven.

\medskip
Now we turn to the "orthogonality" property. Here we have to consider the case $m=1$ only;
for $m=0$ there are no orthogonality restrictions in the definition of $\cH_{-1}^r$.
We show a bit more than needed, we prove that
\begin{equation} \label{1808}
    \sum_{n=0}^\infty [s^n] \frac{H(s)}{\sqrt{1-s}} = 0
\end{equation}
for every $H\in \cH_{m}^r$, $m\ge1$.
Fix some $N\ge 1$ and consider the partial sum\begin{align*}
\sum_{n=0}^N [s^n] \frac{H(s)}{\sqrt{1-s}}
&=\sum_{n=0}^N \sum_{k=0}^n a^{(1)}_k h_{n-k}\\
&=\sum_{k=0}^N \sum_{n=k}^N a^{(1)}_k h_{n-k} 
=\sum_{k=0}^N a^{(1)}_k  \left(\sum_{n=0}^{N-k} h_{n}\right).
\end{align*}
Set, for brevity,
$$
v_n := \sum\limits_{k=0}^n h_k.
$$
Since $H(s)\in\cH_1^r$ one has $\sum_{k=0}^\infty h_n=0$ and, consequently,
$$
v_n = -\sum\limits_{k=n+1}^\infty h_k.
$$
We know that $h_n = O\left(\frac{\log^r n}{n^2}\right)$. Consequently,  
$v_n = O\left(\frac{\log^r n}{n}\right)$. This bound implies that, for some constants $C_0$ and $C_1$,
\begin{align*}
\left|\sum_{k=0}^{\lfloor \frac{N}{2} \rfloor} a^{(1)}_k  v_{N-k}\right| 
&\le C_0\frac{\log^r N}{N}
\sum_{k=0}^{\lfloor \frac{N}{2} \rfloor}a^{(1)}_k\\
&\le C_1\frac{\log^r N}{N}\sqrt{N}\rightarrow0
\quad \text{as }N\to \infty,
\end{align*}
in the last step we have used again the relation
$a_k^{(1)}\sim\frac{1}{\sqrt{\pi n}}$.    
By the same fact, for some constants $C_2, C_3$,
\begin{align*}
\left|\sum_{k=\lfloor \frac{N}{2} \rfloor + 1}^N a^{(1)}_k v_{N-k}\right|
&\le\frac{C_2}{\sqrt{N}}\sum_{k=\lfloor\frac{N}{2}\rfloor + 1}^N|v_{N-k}|
\le \frac{C_2}{\sqrt{N}}\sum_{k=0}^{N}|v_{k}|
\\
&\le\frac{C_3}{\sqrt{N}}\left(2+\sum_{k=2}^N\frac{\log^r k}{k}\right) 
\le C_3 \frac{\log^{r+1}N}{\sqrt{N}}\rightarrow0
\quad \text{as }N\to \infty.
\end{align*}
Therefore, the sequence of partial sums 
$\sum_{n=0}^N[s^n]\frac{H(s)}{(1-s)^{1/2}}$ converges to zero, and
\eqref{1808} holds.

So we have proved the lemma for $m=0,1$. Now we apply the induction argument.
Fix some $m \ge 2$ and assume that the lemma is valid for all smaller indices. Clearly,
\begin{equation*}
\frac{d}{ds}\frac{H(s)}{\sqrt{1-s}} 
=\frac{\dH(s)}{\sqrt{1-s}}-
\frac{1}{2}\frac{H(s)}{1-s}\frac{1}{\sqrt{1-s}}.
\end{equation*}
We already have shown in Lemmata \ref{dlemma} and \ref{lem:hlemma_1} that if $H\in \cH_m^r$ then
\begin{equation*}
    \frac{H(s)}{1-s}, \dH(s) \in \cH_{m-2}^r.
\end{equation*}
By the induction assumption,
\begin{equation*}
\frac{\dH(s)}{\sqrt{1-s}}, 
\frac{H(s)}{1-s}\frac{1}{\sqrt{1-s}} \in 
\cH_{m-3}^{r+\delta_m}.
\end{equation*}
Consequently,
\begin{equation*}
\frac{d}{ds}\frac{H(s)}{\sqrt{1-s}}\in \cH_{m-3}^{r+\delta_m}.
\end{equation*}
Since we have proved \eqref{1808} we have $\lim\limits_{s \to 1}H(s)/\sqrt{1-s} = 0$ and we may apply Lemma~\ref{dlemma} to get
\begin{equation*}
\frac{H(s)}{\sqrt{1-s}} \in \cH_{m-1}^{r+\delta_m}.
\end{equation*}
\end{proof}
\begin{lemma}
\label{lem:partial}
Assume that $H(s)=\sum_{k=0}^\infty h_ks^k$ belongs to $\cH_m^r$ with some $m\ge0$.
Then, for every $j\ge1$ there exists a constant $A_j$ depending on $j,m$ and $r$ only such that
$$
\sum_{k=0}^{N_n} a_{n-k}^{(j)}h_k
\le A_j|H(s)|_{\cH_m^r}\frac{(\log n)^{r+\delta_m}}{n^{(m+2j)/2}}
$$
for all $N_n\in\left[\frac{n}{3},\frac{2n}{3}\right]$.
\end{lemma}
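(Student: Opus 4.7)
The plan is to leverage the orthogonality property built into $\cH_m^r$. The trivial bound $|a_{n-k}^{(j)}|\le Cn^{1/2-j}$ (valid because $n-k\ge n/3$) combined with $\sum_k|h_k|<\infty$ only gives $O(n^{1/2-j})$, which falls short of the target by a factor $n^{(m+1)/2}$. The idea is to approximate $a_{n-k}^{(j)}$, viewed as a function of $k$, by a polynomial of degree $\lfloor m/2\rfloor$ plus a small remainder; then the polynomial part, after extending the summation to all $k\ge 0$, is killed by the orthogonality of $\{h_k\}$ to polynomials of that degree.

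Concretely, choose $M$ large (say $M\ge(m+2j)/2$) and combine \eqref{eq:anasympdecomp} with the Taylor expansion of $(n-k)^{-\ell+1/2}=n^{-\ell+1/2}(1-k/n)^{-\ell+1/2}$ around $k=0$ up to order $T:=\lfloor m/2\rfloor$. Since $k/n\le 2/3$, the Taylor remainder is uniformly controlled, giving
\begin{equation*}
a_{n-k}^{(j)}=P_n(k)+R_n(k),
\end{equation*}
where $P_n(k)$ is a polynomial in $k$ of degree $\le T$ with $n$-dependent coefficients, and $|R_n(k)|\le C\bigl(k^{T+1}n^{-j-T-1/2}+n^{-M-1/2}\bigr)$.

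By the orthogonality from Definition \ref{def.1}, $\sum_{k=0}^{\infty}P_n(k)h_k=0$, so
\begin{equation*}
\sum_{k=0}^{N_n}P_n(k)h_k=-\sum_{k>N_n}P_n(k)h_k.
\end{equation*}
Using $|h_k|\le|H|_{\cH_m^r}\log^r k/k^{(m+3)/2}$ for $k\ge 2$ together with the termwise bound $|P_n(k)|\le C\sum_{\ell=j}^{M}\sum_{i=0}^{T} k^i n^{-\ell+1/2-i}$, a term-by-term estimate of the tail (dominated by $\ell=j$) yields a contribution of order $|H|_{\cH_m^r}\log^r n/n^{(m+2j)/2}$, well within the target.

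For the remainder term, one has to bound $n^{-j-T-1/2}\sum_{k\le N_n}k^{T+1}|h_k|$, and it is here that the parity of $m$ becomes decisive. When $m$ is even, $k^{T+1}|h_k|$ decays like $k^{-1/2}\log^r k$ and the partial sum is $O(n^{1/2}\log^r n)$; when $m$ is odd, the decay is $k^{-1}\log^r k$ and the partial sum is only $O(\log^{r+1}n)$. Either way one obtains $O(|H|_{\cH_m^r}\log^{r+\delta_m}n/n^{(m+2j)/2})$, and the $n^{-M-1/2}$ contribution is absorbed by the choice of $M$. Combining the two estimates completes the proof. The main technical point is precisely the parity dichotomy, which is exactly what the indicator $\delta_m$ encodes; beyond this, the polynomial tail of $P_n$ and the Taylor remainder are both controlled by elementary summation.
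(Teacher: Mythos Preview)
Your argument is correct and takes a genuinely different route from the paper's own proof. The paper proceeds by iterated Abel summation: writing $v_k=\sum_{l\le k}h_l$ and using the identity $a_n^{(j)}-a_{n-1}^{(j)}=a_n^{(j+1)}$ from Lemma~\ref{lem:an}(i), one obtains
\[
\sum_{k=0}^{N_n}a_{n-k}^{(j)}h_k=\sum_{k=0}^{N_n-1}a_{n-k}^{(j+1)}v_k+a_{n-N_n}^{(j)}v_{N_n}.
\]
The boundary term is controlled via $|v_k|\lesssim |H|_{\cH_m^r}\log^r k/k^{(m+1)/2}$, and the remaining sum is the same quantity with $j$ replaced by $j+1$ and $\{h_k\}$ replaced by $\{v_k\}$, whose generating function $H(s)/(1-s)$ lies in $\cH_{m-2}^r$ by Lemma~\ref{lem:hlemma_1}. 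This reduces $m$ by $2$ at each step and terminates at $m\in\{0,1\}$, where the crude estimate $|a_{n-k}^{(j+1)}|\le Cn^{-j-1/2}$ together with $\sum_{k\le N_n}|v_k|$ yields the claim; the parity distinction $\delta_m$ emerges at this base case exactly as in your analysis.

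Your approach instead expands $a_{n-k}^{(j)}$ as a degree-$\lfloor m/2\rfloor$ polynomial in $k$ (via \eqref{eq:anasympdecomp} and Taylor's theorem for $(1-k/n)^{1/2-\ell}$ on $[0,2/3]$) and uses the orthogonality in Definition~\ref{def.1} directly to annihilate the full sum of the polynomial part. This is more transparent about \emph{why} orthogonality to polynomials of degree $\lfloor m/2\rfloor$ is exactly the right hypothesis, and it avoids induction on $m$; on the other hand it consumes the full expansion \eqref{eq:anasympdecomp}, whereas the paper's summation-by-parts argument only needs the rough bound $|a_n^{(j)}|\le C_jn^{1/2-j}$ and the difference identity. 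Both arguments localise the extra $\log$ to the same place: the borderline sum $\sum_{k\le n}k^{-1}\log^r k$ that appears only when $m$ is odd.
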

\begin{proof}
Set
$$
v_k:=\sum_{l=0}^k h_l,\quad k\ge0.
$$
Then one has 
\begin{align*}
\sum_{k=0}^{N_n} a_{n-k}^{(j)}h_k
&=a_n^{(j)}v_0+\sum_{k=1}^{N_n} a_{n-k}^{(j)}(v_k-v_{k-1})\\
&=(a_n^{(j)}-a_{n-1}^{(j)})v_0+\sum_{k=1}^{N_n} a_{n-k}^{(j)}v_k
-\sum_{k=2}^{N_n} a_{n-k}^{(j)}v_{k-1}\\
&=(a_n^{(j)}-a_{n-1}^{(j)})v_0+\sum_{k=1}^{N_n-1}(a_{n-k}^{(j)}-a_{n-k-1}^{(j)})v_k
+a_{n-N_n}^{(j)}v_{N_n}\\
&=\sum_{k=0}^{N_n-1}(a_{n-k}^{(j)}-a_{n-k-1}^{(j)})v_k
+a_{n-N_n}^{(j)}v_{N_n}.
\end{align*}
We know from Lemma \ref{lem:an} that $a_n^{(j)}-a_{n-1}^{(j)}=a_n^{(j+1)}$. Therefore,
\begin{equation}
\label{eq:partial.1}
\sum_{k=0}^{N_n} a_{n-k}^{(j)}h_k
=\sum_{k=0}^{N_n-1} a_{n-k}^{(j+1)}v_k+a_{n-N_n}^{(j)}v_{N_n}.
\end{equation}
Since $H\in\cH_m^r$ with $m\ge0$, $\sum_{\ell=0}^\infty h_\ell=0$ so we get
\begin{align}
\label{eq:partial.2}
\nonumber
|v_k|&=\left|\sum_{\ell=k+1}^\infty h_\ell\right|
\le |H(s)|_{\cH_m^r}\sum_{\ell=k+1}^\infty\frac{\log^r \ell}{\ell^{(m+3)/2}}\\
&\le |H(s)|_{\cH_m^r} \theta(r,m)\frac{\log^r k}{k^{(m+1)/2}},\quad k\ge2,
\end{align}
where 
$$
\theta(r,m):=\max_{k\ge2}\frac{k^{(m+1)/2}}{\log^r k}\sum_{\ell=k+1}^\infty\frac{\log^r \ell}{\ell^{(m+3)/2}}<\infty.
$$
Combining this with the estimate $|a_n^{(j)}|\le C_jn^{-j+1/2}$, we infer from 
\eqref{eq:partial.1} that 
$$
\left|\sum_{k=0}^{N_n} a_{n-k}^{(j)}h_k-\sum_{k=0}^{N_n} a_{n-k}^{(j+1)}v_k\right|
=|a_{n-N_n}^{(j)}v_{N_n}|
\le \tilde{\theta}(j,r,m)|H(s)|_{\cH_m^r}\frac{(\log n)^r}{n^{(m+2j)/2}}
$$
for some finite constant $\tilde{\theta}(j,r,m)$, which does not depend on the the sequence
$\{h_n\}$.

Now we can conclude that if the lemma is valid for the sequence $\{v_n\}$ then it is also valid for the sequence $\{h_n\}$. Noting that the generating function of $\{v_n\}$ belongs to $\cH_{m-2}^r$ for any $H\in\cH_{m}^r$ and any $m\ge2$, we see that it remains to prove the lemma for $m=0$ and $m=1$. Using \eqref{eq:partial.2}, one gets easily, for $m=0$ and $m=1$, 
\begin{align*}
\left|\sum_{k=0}^{N_n} a_{n-k}^{(j+1)}v_k\right|
&\le \sum_{k=0}^{N_n}|a_{n-k}^{(j+1)}||v_k|\\
&\le C_jn^{-j-1/2}(2|H(s)|_{\cH_m^r}+\sum_{k=2}^{N_n}|v_k|)
\\ &\le \Theta |H(s)|_{\cH_m^r} n^{-j-1/2}\frac{(\log n)^{r+\delta_m}}{n^{(m-1)/2}},
\end{align*}
where $\Theta$ is a constant depending only on $j$ and $r$. Thus, the proof is finished.\end{proof}
\begin{definition}
We shall say that a generating function $H$ belongs to the class
$\mathcal{R}_{k,m}^r,$ with $m\ge k\ge-1$ if there exist numbers
$q_i, i = k, \dots, m$ such that
\begin{equation} \label{hdecdef}
H(s)-\sum_{i=k}^{m} q_i (1-s)^{i/2} \in  \cH_m^r.
\end{equation}
\end{definition}
The following simple lemma underlines the importance of the class $\mathcal{R}_{k,m}^r$
for our approach.
\begin{lemma} \label{lem:rdecomp}
A generating function $H(s)$ belongs to the class $\mathcal{R}_{k,m}^r$ with $k=-1$ or $k=0$ if and only if
\begin{equation}
\label{eq:h_n-expan}
[s^n]H(s)=\sum_{j=\lfloor \frac{k+1}{2}\rfloor}^{\lfloor \frac{m+1}{2}\rfloor}\mu_{j}a_n^{(j+1)}
+O\left(\frac{\log^r n}{n^{(m+3)/2}}\right).
\end{equation}
\end{lemma}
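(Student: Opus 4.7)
The plan is to prove the two implications directly from the definitions, with the ``only if'' direction being a straightforward termwise computation and the ``if'' direction requiring a Vandermonde-type argument to absorb the orthogonality defect using the even-exponent polynomial terms available in $\mathcal{R}_{k,m}^r$.

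For the forward direction, I would start from the decomposition $H(s)=\sum_{i=k}^{m}q_i(1-s)^{i/2}+G(s)$ with $G\in\cH_m^r$ and split the sum according to the parity of $i$. Each term with even $i=2\ell\ge 0$ is a polynomial in $s$, so $[s^n](1-s)^\ell=0$ for $n>\ell$ and its contribution disappears into the $O(\log^r n/n^{(m+3)/2})$ error. Each term with odd $i=2j-1$ contributes exactly $q_{2j-1}\,a_n^{(j+1)}$, since $a_n^{(j+1)}=[s^n](1-s)^{j-1/2}$ by definition; the resulting $j$ runs over $\lfloor(k+1)/2\rfloor\le j\le\lfloor(m+1)/2\rfloor$, with $\mu_0=0$ forced when $k=0$ (since $i=-1$ is absent). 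Combining these with the bound $[s^n]G(s)=O(\log^r n/n^{(m+3)/2})$ from the definition of $\cH_m^r$ yields \eqref{eq:h_n-expan}.

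For the reverse direction, assuming \eqref{eq:h_n-expan}, I would first form
\[
\tilde H(s):=H(s)-\sum_{j=\lfloor(k+1)/2\rfloor}^{\lfloor(m+1)/2\rfloor}\mu_j(1-s)^{j-1/2},
\]
so that $[s^n]\tilde H(s)=R_n=O(\log^r n/n^{(m+3)/2})$. This secures condition (i) of Definition~\ref{def.1} for $\tilde H$, but condition (ii) (orthogonality to polynomials of degree $\le\lfloor m/2\rfloor$) may still fail. To repair it, I would look for a correcting polynomial $P(s)=\sum_{\ell=0}^{\lfloor m/2\rfloor}\pi_\ell(1-s)^\ell$ such that $\tilde H-P\in\cH_m^r$. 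Subtracting $P$ alters only finitely many coefficients and so preserves the asymptotic bound on $R_n$. The orthogonality requirements $\sum_{n\ge 0}[s^n](\tilde H-P)\,n^\ell=0$ for $\ell=0,\ldots,\lfloor m/2\rfloor$ reduce to a linear system in the $\pi_\ell$; the right-hand sides $\sum_n R_n n^\ell$ converge because $R_n n^\ell=O(\log^r n/n^{3/2})$ for $\ell\le\lfloor m/2\rfloor$, and after the triangular change of basis from $(1-s)^\ell$ to $s^n$ the associated matrix is Vandermonde on the distinct nodes $0,1,\ldots,\lfloor m/2\rfloor$, hence invertible. This produces $P$ and the decomposition
\[
H(s)=\bigl(\tilde H(s)-P(s)\bigr)+P(s)+\sum_{j}\mu_j(1-s)^{j-1/2}
\]
exhibits $H$ as a member of $\mathcal{R}_{k,m}^r$, since $P$ is a linear combination of even-exponent pieces $(1-s)^{i/2}$ for $i\in\{0,2,4,\ldots\}\cap[k,m]$.

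The one point requiring care is the dimension count: $\mathcal{R}_{k,m}^r$ must supply exactly $\lfloor m/2\rfloor+1$ even-exponent terms to match the $\lfloor m/2\rfloor+1$ orthogonality conditions in Definition~\ref{def.1}. A direct parity check of $\{0,2,4,\ldots\}\cap[-1,m]$ and $\{0,2,4,\ldots\}\cap[0,m]$ confirms this for both $k=-1$ and $k=0$, which is precisely why the lemma is stated only for those two values of $k$.
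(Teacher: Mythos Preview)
Your proof is correct and follows the same overall structure as the paper's: subtract the odd-exponent pieces $(1-s)^{j-1/2}$ to match the asymptotics, then correct the orthogonality defect using the even-exponent pieces $(1-s)^\ell$, $0\le\ell\le\lfloor m/2\rfloor$.

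The one substantive difference is in how the correcting coefficients $\pi_\ell$ are produced in the reverse direction. You establish their existence abstractly via invertibility of the Vandermonde system. The paper instead writes them down explicitly as a Taylor expansion at $s=1$: with $G(s)=\sum_n\delta_n s^n$ (your $\tilde H$), it sets
\[
\pi_\ell=\frac{(-1)^\ell}{\ell!}\,G^{(\ell)}(1)=(-1)^\ell\sum_{n\ge\ell}\frac{n!}{(n-\ell)!}\,\delta_n,
\]
so that $G(s)-\sum_{\ell=0}^{\lfloor m/2\rfloor}\pi_\ell(1-s)^\ell$ has all derivatives up to order $\lfloor m/2\rfloor$ vanishing at $s=1$, which is equivalent to orthogonality to polynomials of degree $\le\lfloor m/2\rfloor$ (the falling factorials $n(n-1)\cdots(n-\ell+1)$ span the same space as $1,n,\ldots,n^{\lfloor m/2\rfloor}$). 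This explicit formula for $q_{2i}$ is reused later in the paper (e.g.\ in the construction of $\psi_{2j}(x)$ after \eqref{eq:psi-decomp}), so the paper's version carries slightly more information forward, but your argument is a perfectly valid substitute for the lemma itself.
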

\begin{proof}
Assume that $H(s)\in\mathcal{R}_{k,m}^r$. Set 
$$
\widetilde{H}(s)=H(s)-\sum_{i=k}^{m} q_i (1-s)^{i/2}.
$$
Then $\widetilde{H}(s)\in \cH_m^r$ and 
$$
H(s)=\sum_{i=k}^{m} q_i (1-s)^{i/2}+\widetilde{H}(s).
$$
Rewriting this equality in terms of coefficients we obtain \eqref{eq:h_n-expan}
with $\mu_j=q_{2j-1}$.

Assume now that \eqref{eq:h_n-expan} holds and denote
$$
\delta_n:=[s^n]H(s)-\sum_{j=\lfloor \frac{k+1}{2}\rfloor}^{j=\lfloor \frac{m+1}{2}\rfloor}\mu_{j}a_n^{(j+1)}.
$$
Therefore, one has the following equality for the generating functions:
$$
H(s)=\sum_{j=\lfloor \frac{k+1}{2}\rfloor}^{j=\lfloor \frac{m+1}{2}\rfloor}\mu_{j}(1-s)^{j-1/2}
+\sum_{n=0}^\infty \delta_n s^n.
$$
We know that $\delta_n=O\left(\frac{\log^r n}{n^{(m+3)/2}}\right)$, but the generating function of $\{\delta_n\}$ is not necessarily orthogonal to polynomials. But the function
$$
\sum_{n=0}^\infty \delta_n s^n
-\sum_{0\le\ell\le \frac{m}{2}}(1-s)^\ell(-1)^\ell\frac{d^\ell}{ds^\ell}\left(\sum_{n=0}^\infty \delta_n s^n\right)\Big|_{s=1}
$$
is orthogonal to any polynomial of degree which does not exceed $m/2$. This gives \eqref{hdecdef} with 
$$
q_{2i-1}=\mu_i\quad\text{and}\quad 
q_{2i}=(-1)^i\frac{d^i}{ds^i}\left(\sum_{n=0}^\infty \delta_n s^n\right)\Bigg|_{s=1}
= (-1)^i\sum_{n=i}^\infty \frac{n!}{(n-i)!}\delta_n.
$$
Thus, the proof is complete.
\end{proof}
\begin{lemma}
\label{lem:derivative}
For all  $m\ge k\ge 1$ one has
\begin{equation*}
H(s)-H(1) \in \cR_{k,m}^r \Longleftrightarrow \dH \in \cR_{k-2,m-2}^r.
\end{equation*}
\end{lemma}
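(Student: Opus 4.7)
The plan is to reduce the equivalence to Lemma \ref{dlemma} via the identity
\begin{equation*}
\frac{d}{ds}(1-s)^{i/2}=-\frac{i}{2}(1-s)^{(i-2)/2},\quad i\ge 1.
\end{equation*}
Since $i\ge k\ge 1$ ensures $i\ne 0$, this sets up a linear bijection between the singular-part coefficients $(q_k,\ldots,q_m)$ parametrising $\cR_{k,m}^r$ and $(\tilde q_{k-2},\ldots,\tilde q_{m-2})$ parametrising $\cR_{k-2,m-2}^r$, through $\tilde q_{i-2}=-\frac{i}{2}q_i$.

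For the $(\Rightarrow)$ direction, I would set $G(s):=H(s)-H(1)-\sum_{i=k}^{m}q_i(1-s)^{i/2}\in\cH_m^r$. Since $m\ge 1$, Lemma \ref{dlemma} gives $\dG\in\cH_{m-2}^r$, and differentiating termwise yields
\begin{equation*}
\dH-\sum_{j=k-2}^{m-2}\tilde q_j(1-s)^{j/2}=\dG\in\cH_{m-2}^r,
\end{equation*}
which is exactly $\dH\in\cR_{k-2,m-2}^r$.

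For the $(\Leftarrow)$ direction, start with $F(s):=\dH-\sum_{j=k-2}^{m-2}\tilde q_j(1-s)^{j/2}\in\cH_{m-2}^r$ and set $q_i:=-\frac{2}{i}\tilde q_{i-2}$, so that $F=\frac{d}{ds}\bigl(H(s)-\sum_{i=k}^{m}q_i(1-s)^{i/2}\bigr)$. The one step requiring genuine care is to give meaning to $H(1)$: the coefficients of $F$ decay like $\log^r n/n^{(m+1)/2}$, so after division by $n+1$ they are absolutely summable (using $m\ge 1$), and Abel's theorem produces a finite limit $\lim_{s\to 1}\bigl(H(s)-\sum_{i}q_i(1-s)^{i/2}\bigr)=:C$. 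Since $(1-s)^{i/2}\to 0$ for $i\ge 1$, this forces $\lim_{s\to 1}H(s)=C$, which I declare to be $H(1)$. Then $G(s):=H(s)-H(1)-\sum_{i}q_i(1-s)^{i/2}$ satisfies $\dG=F\in\cH_{m-2}^r$ and $\lim_{s\to 1}G(s)=0$, so Lemma \ref{dlemma} in its converse direction yields $G\in\cH_m^r$, i.e.\ $H(s)-H(1)\in\cR_{k,m}^r$.

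The chief obstacle is exactly the existence of $H(1)$ in the reverse implication, since a generic $H$ with $\dH\in\cR_{k-2,m-2}^r$ need not have $\sum h_n$ convergent; its resolution leans on the quantitative decay built into the norm on $\cH_{m-2}^r$ together with the assumption $m\ge 1$. The rest is algebraic bookkeeping via the explicit correspondence $\tilde q_{i-2}=-\frac{i}{2}q_i$.
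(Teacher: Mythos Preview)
Your argument is correct and follows the same route as the paper: both rely on the identity $\frac{d}{ds}(1-s)^{i/2}=-\frac{i}{2}(1-s)^{(i-2)/2}$ together with Lemma~\ref{dlemma}, with your write-up simply spelling out both implications and the existence of $H(1)$ in more detail than the paper's terse two-line proof.
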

\begin{proof}
The desired equivalence follows from the equality
\begin{equation*}
\frac{d}{ds}\left(\sum_{i=k}^{m} q_i (1-s)^{i/2}\right)
=\sum_{i=k}^{m} - \frac{i}{2} q_i(1-s)^{i/2-1}
=\sum_{i=k-2}^{m-2} \left(-\frac{i+2}{2} q_{i+2}\right)(1-s)^{i/2}
\end{equation*}
and from Lemma \ref{dlemma}.
\end{proof}
\begin{lemma} \label{rlemma}
If $H \in \cR_{k_0,m_0}^r$ and  $G \in \cR_{k_1,m_1}^r$ with $k_0 \le k_1$ and $m_0 \le m_1$, then
\begin{enumerate}
    \item $H + G \in \cR_{k_0,m_0}^r$,
    \smallskip
    \item $H \cdot G \in \cR_{k_0,m_0}^r\;$ if $\;k_0\ge0$,
    \smallskip
    \item $H \cdot G \in \cR_{k_1-1,m_0\wedge(m_1-1)}^{r+\textcolor{red}{\delta}} \;$ if $\;k_0 = -1$ and $ k_1 \ge 0$.
\end{enumerate}
where  $\delta = \mathds{1}
_{\{m_1\equiv 1 \mod 2\}} \cdot \mathds{1}_{m_1 \le m_0 +1}$.
\end{lemma}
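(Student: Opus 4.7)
The plan is to substitute the defining decompositions
$H=\sum_{i=k_0}^{m_0}q_i^H(1-s)^{i/2}+\widetilde H$ and
$G=\sum_{j=k_1}^{m_1}q_j^G(1-s)^{j/2}+\widetilde G$
(with $\widetilde H\in\cH_{m_0}^r$ and $\widetilde G\in\cH_{m_1}^r\subset\cH_{m_0}^r$) into the relevant combination, collect the surviving $(1-s)^{i/2}$ terms whose exponents lie in the target range, and check that every other summand belongs to the target $\cH$-class. I will repeatedly invoke three absorption principles: (a) for odd $i\ge M+1$, $(1-s)^{i/2}\in\cH_{i-1}^0\subset\cH_M^0$ by Lemma \ref{1-slemma}; (b) for even $i\ge M+1$, $(1-s)^{i/2}$ is a polynomial in $s$ whose $[s^n]$ coefficients are orthogonal to all polynomials of degree $<i/2$, so it also lies in $\cH_M^r$; and (c) $\cH_{m'}^{r'}\subset\cH_m^r$ whenever $m'\ge m$ and $r'\le r$.

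Part (1) is immediate: $H+G$ equals the sum of the two power parts plus $\widetilde H+\widetilde G\in\cH_{m_0}^r$; combining the coefficients for $i\in[k_0,m_0]$ and absorbing the leftover $q_i^G(1-s)^{i/2}$ with $m_0<i\le m_1$ via (a)--(b) yields $H+G\in\cR_{k_0,m_0}^r$.

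For (2), when $k_0\ge0$ I expand $HG$ as the sum of the bilinear power product, two ``power$\times$remainder'' terms, and $\widetilde H\widetilde G$. Lemma \ref{mlemma} handles the last; Lemma \ref{lem:hlemma_2} (with the trivial case of exponent zero) handles the mixed terms, all of which land in $\cH_{m_0}^r$. The bilinear sum produces $(1-s)^{\ell/2}$ with coefficients $\alpha_\ell=\sum_{i+j=\ell}q_i^H q_j^G$ for $\ell\in[k_0+k_1,m_0+m_1]$: those with $\ell\le m_0$ become the new $\cR$-coefficients (zero when $\ell<k_0+k_1$), those with $\ell>m_0$ are absorbed by (a)--(b). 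Hence $HG\in\cR_{k_0,m_0}^r$.

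For (3), set $M:=m_0\wedge(m_1-1)$ and split $H=q_{-1}^H(1-s)^{-1/2}+H_*$ with $H_*\in\cR_{0,m_0}^r$. Arguing as in (2), the bilinear coefficients of $H_*G$ vanish for exponents $<k_1$, so $H_*G\in\cR_{k_1,m_0}^r\subset\cR_{k_1-1,m_0}^r$; truncation at $M$ via (a)--(b) together with (c) gives $H_*G\in\cR_{k_1-1,M}^{r+\delta}$. For the remaining piece $(1-s)^{-1/2}G$, the power part contributes $\sum_{j=k_1}^{m_1}q_j^G(1-s)^{(j-1)/2}$, producing new coefficients at exponents $[k_1-1,m_1-1]$ (truncated at $M$, excess absorbed by (a)--(b)), and Lemma \ref{lem:hlemma_3} places $\widetilde G/\sqrt{1-s}$ in $\cH_{m_1-1}^{r+\delta_{m_1}}$. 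The main obstacle is the final inclusion $\cH_{m_1-1}^{r+\delta_{m_1}}\subset\cH_M^{r+\delta}$, which splits into three cases according to the parity of $m_1$ and whether $m_1\le m_0+1$. Two cases reduce directly to (c); the only nontrivial one is $m_1$ odd with $m_1>m_0+1$, where $M=m_0\le m_1-2$, $\delta=0$ while $\delta_{m_1}=1$. Here the inclusion follows because the extra $\log$ factor is absorbed by the strictly faster decay ($\log n/n^{(m_1-m_0-1)/2}=O(1)$), and the orthogonality requirement $\lfloor m_0/2\rfloor\le\lfloor(m_1-1)/2\rfloor$ is automatic. This case distinction is the delicate point; everything else is a direct application of the earlier lemmas.
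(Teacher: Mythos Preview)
Your proof is correct and follows essentially the same approach as the paper: decompose $H$ and $G$ into their power parts and $\cH$-remainders, then control the four types of cross terms using Lemmata~\ref{mlemma}, \ref{lem:hlemma_2}, \ref{lem:hlemma_3}, together with the absorption of high-exponent $(1-s)^{i/2}$ into the target $\cH$-class. The only organisational difference is in part~(iii): the paper expands the full product $HG$ directly and sorts the pieces, whereas you first peel off $q_{-1}^H(1-s)^{-1/2}$, handle $H_*G$ by invoking (the slightly sharpened form of) part~(ii), and treat $(1-s)^{-1/2}G$ separately---this is a cleaner modularisation but uses exactly the same ingredients. Your case analysis for the inclusion $\cH_{m_1-1}^{r+\delta_{m_1}}\subset\cH_M^{r+\delta}$ matches the paper's terse two-line treatment of the same point.
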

\begin{proof}
The first statement is immediate from the relation
$\cR_{k_0,m_0}^r \subset \cR_{k_1,m_1}^r$.

It follows from the assumptions $H \in \cR_{k_0,m_0}^r$ and  $G \in \cR_{k_1,m_1}^r$ that 
$$
H(s)=\sum_{i=k_0}^{m_0} q^H_i (1-s)^{i/2} + H_0(s)
$$
and 
$$
G(s)=\sum_{i=k_1}^{m_1} q^G_i (1-s)^{i/2} + G_0(s),
$$
where
\begin{equation*}
H_0 \in \cH_{m_0}^r\quad\text{and}\quad G_0 \in \cH_{m_1}^r.
\end{equation*}
Therefore,
\begin{equation*}
H(s)G(s)
=\left(\sum_{i=k_0}^{m_0} q^H_i (1-s)^{i/2} + H_0(s)\right)
\left(\sum_{i=k_1}^{m_1} q^G_i (1-s)^{i/2} + G_0(s)\right).
\end{equation*}

If $k_0 \ge 0$ then the statement in (ii) follows from Lemmata~\ref{mlemma} and \ref{lem:hlemma_2}.

To prove the last statement we notice that 
\begin{align*}
H(s)G(s)
&=\sum_{i=-1}^{m_0}\sum_{j=k_1}^{m_1}q_i^Hq_j^G(1-s)^{(i+j)/2}
+\sum_{j=k_1}^{m_1}q_j^G(1-s)^{j/2}H_0(s)\\
&\hspace{1cm}+q^H_{-1}\frac{G_0(s)}{\sqrt{1-s}}
+\sum_{i=0}^{m_0}q_i^H(1-s)^{i/2}G_0(s)
+H_0(s)G_0(s).
\end{align*}
By Lemma \ref{lem:hlemma_3}, 
\begin{equation*}
\frac{G_0(s)}{\sqrt{1-s}} \in \cH_{m_1-1}^{r+\delta_{m_1}}.
\end{equation*}
Moreover, by Lemma~\ref{lem:hlemma_2},
$$
\sum_{i=0}^{m_0-1}q_i^H(1-s)^{i/2}G_0(s)\in \cH_{m_1}^r
$$
and 
$$
\sum_{j=k_1}^{m_1-1}q_j^G(1-s)^{j/2}H_0(s)\in\cH_{m_0}^r.
$$
Finally, by Lemma \ref{mlemma}, $H_0(s)G_0(s)$ belongs to $\cH_{m_0}^r$. So we can write
\begin{align*}
H(s)G(s)-\sum_{i=-1}^{m_0}\sum_{j=k_1}^{m_1}q_i^Hq_j^G(1-s)^{(i+j)/2} \in \cH_{m_0}^r \cup \cH_{m_1-1}^{r+\delta_{m_1}}
\end{align*}
If $m_1 > m_0 + 1$ then $\cH_{m_1-1}^{r+\delta_{m_1}} \subset \cH_{m_0}^r$. Otherwise 
   $ \cH_{m_0}^r \subset \cH_{m_1-1}^{r+\delta_{m_1}}$.
So it is the only rest to mention that the minimal value of $(i+j)$ equals $k_1-1$. 
\end{proof}
\begin{lemma}
\label{lem:exp1-s}
If $k\ge1$ and $\lambda\in\R$ then 
\begin{equation*}
e^{\lambda(1-s)^{k/2}} \in \cR_{0,m}^0\quad\text{for every } m\ge1.
\end{equation*}
\end{lemma}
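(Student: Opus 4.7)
The plan is to Taylor-expand the exponential in powers of $(1-s)^{k/2}$, identify the initial segment as the polynomial-in-$(1-s)^{1/2}$ part of the required decomposition, and show that the infinite tail lies in $\cH_m^0$. Writing
\begin{equation*}
e^{\lambda(1-s)^{k/2}}=\sum_{n=0}^{N-1}\frac{\lambda^n}{n!}(1-s)^{nk/2}+\sum_{n=N}^{\infty}\frac{\lambda^n}{n!}(1-s)^{nk/2}=:T(s)+R(s),
\end{equation*}
with $N:=\lceil(m+1)/k\rceil$, one has $nk\le m$ for $n<N$ and $nk\ge m+1$ for $n\ge N$. Consequently $T(s)=\sum_{i=0}^m q_i(1-s)^{i/2}$ where $q_{nk}=\lambda^n/n!$ for $0\le n\le N-1$ and $q_i=0$ otherwise (in particular $q_0=1$), so the entire claim reduces to showing $R(s)\in\cH_m^0$.

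For the tail, I would first observe that each summand $(1-s)^{nk/2}$ with $n\ge N$ lies in $\cH_{nk-1}^0$, and hence in $\cH_m^0$ via the inclusion $\cH_{m_0}^r\subset\cH_{m_1}^0$ for $m_0>m_1$ noted after Definition~\ref{def.1}. Indeed, when $nk$ is odd this membership is Lemma~\ref{1-slemma} applied with exponent $(nk+1)/2$, and when $nk$ is even $(1-s)^{nk/2}$ is a polynomial whose coefficients annihilate every polynomial of degree strictly less than $nk/2$, which is at least $\lfloor m/2\rfloor+1$ since $nk\ge m+1$. To make the norms summable in $n$ I would factor
\begin{equation*}
(1-s)^{nk/2}=(1-s)^{Nk/2}\cdot(1-s)^{(n-N)k/2}
\end{equation*}
and apply Lemma~\ref{lem:hlemma_2} iteratively $(n-N)k$ times to the basepoint $(1-s)^{Nk/2}\in\cH_m^0$, obtaining $|(1-s)^{nk/2}|_{\cH_m^0}\le C^{(n-N)k}|(1-s)^{Nk/2}|_{\cH_m^0}$ for a constant $C=C(m,0)$. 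Since $\sum_{n\ge N}|\lambda|^n C^{(n-N)k}/n!<\infty$, a termwise triangle-inequality argument yields the coefficient bound $|[s^j]R(s)|\le K/j^{(m+3)/2}$ for $j\ge 2$, and a Fubini interchange of the double sum (justified by the same absolute bounds) together with the orthogonality of each individual $(1-s)^{nk/2}$ gives the orthogonality of $R(s)$ to every polynomial of degree $\le\lfloor m/2\rfloor$.

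The main obstacle is that $(1-s)^{k/2}$ itself belongs only to $\cH_{k-1}^0$, so when $k<m+1$ it fails to lie in $\cH_m^0$ and Lemma~\ref{explemma} cannot be applied directly with $H(s)=\lambda(1-s)^{k/2}$. The factorisation step above circumvents this obstruction: one first accumulates exactly enough powers of $(1-s)^{1/2}$ to enter the class $\cH_m^0$, and only afterwards uses Lemma~\ref{lem:hlemma_2} to estimate the norms of the higher powers appearing in the tail. Once this pivot is in place, the remaining coefficient decay and Fubini-based orthogonality verifications are routine.
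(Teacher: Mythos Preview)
Your proof is correct and follows the same overall strategy as the paper: Taylor-expand the exponential, keep the low-order terms as the polynomial-in-$(1-s)^{1/2}$ part, and show the remaining tail lies in $\cH_m^0$. The only difference is in how the tail is controlled. You anchor at $n=N$, bound $|(1-s)^{nk/2}|_{\cH_m^0}$ by iterating Lemma~\ref{lem:hlemma_2} $(n-N)k$ times, and then sum the resulting geometric-over-factorial series by hand (with a Fubini step for orthogonality). The paper instead splits the tail $\sum_{n>m}$ into $m+1$ sub-series according to the residue of $n$ modulo $m+1$, factors each as $(1-s)^{kr/2}$ times a power series in $(1-s)^{k(m+1)/2}\in\cH_m^0$, and invokes Lemma~\ref{explemma} directly. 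Your route is a little more elementary in that it re-derives the Fubini/norm-summation content of Lemma~\ref{explemma}; the paper's grouping trick lets that lemma be applied off the shelf. Both arguments solve the same obstruction you identify (that $(1-s)^{k/2}$ itself need not lie in $\cH_m^0$) and arrive at the result with comparable effort.
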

\begin{proof}
By definition of the exponent:
\begin{align*}
e^{\lambda(1-s)^{k/2}} 
&=\sum_{n=0}^m \frac{\lambda^n (1-s)^{kn/2} }{n!}
+\sum_{n=m+1}^\infty\frac{\lambda^n (1-s)^{kn/2} }{n!}\\
&=\sum_{n=0}^m \frac{\lambda^n (1-s)^{kn/2} }{n!}
+\sum_{r=0}^{m}\sum_{l=1}^\infty\frac{\lambda^{l(m+1)+r}(1-s)^{k(l(m+1)+r)/2}}{(l(m+1)+r)!}.
\end{align*}
For every fixed $r$,
\begin{equation*}
\sum_{l=1}^\infty\frac{\lambda^{l(m+1)+r}(1-s)^{k(l(m+1)+r)/2} }{(l(m+1)+r)!} 
=\lambda^r (1-s)^{k r/2}\sum_{l=1}^\infty
\frac{\left(\lambda^{(m+1)} (1-s)^{k(m+1)/2} \right)^l}{((m+1)l + r)!}.
\end{equation*}
By Lemma~\ref{1-slemma}, $(1-s)^{k (m+1)/2}  \in \cH^0_{k(m+1)-1}\subset \cH_{m}^0$.
Applying  now Lemma~\ref{explemma}, we have
\begin{equation*}
\sum_{l=1}^\infty
\frac{\left(\lambda^{(m+1)} (1-s)^{k(m+1)/2} \right)^l}{((m+1)l+r)!}
\in \cH^0_{m} .
\end{equation*}
Consequently, since by lemma \ref{lem:hlemma_2} multiplication by $(1-s)^{kr/2}$ does not affect parameter $m$:
\begin{align*}
\lambda^r (1-s)^{k r/2}\sum_{l=1}^\infty
\frac{\left(\lambda^{(m+1)} (1-s)^{k(m+1)/2} \right)^l}{((m+1)l + r)!}\in \cH^0_{m}
\end{align*}
Then by summation over $r$ we obtain the desired
\begin{equation*}
e^{\lambda(1-s)^{k/2}}-\sum_{n=0}^{m} \frac{\lambda^n (1-s)^{kn/2} }{n!}
\in\cH_{m}^0.
\end{equation*}
\end{proof}
\section{Asymptotic expansions for $\tau_0$}
\label{sec:tau0}
\subsection{Proof of Theorem~\ref{thm:tau_0}}
As we have already mentioned in the introduction, we are going to use the representation \eqref{eq:WF2}.  To this end we first derive an appropriate for our purposes expression for the generating function of the sequence $\frac{\Delta_n}{n}$. According to Proposition~\ref{prop:S_n<0} with $r+3$ finite moments,
$$
\frac{\Delta_n}{n}=\sum_{j=1}^{\lfloor\frac{r}{2}\rfloor+1}
\theta_ja^{(j+1)}_{{n-1}}+h_n
\quad\text{and}\quad 
h_n=o(n^{-(r+3)/2}).
$$
Hence  
\begin{align}
\label{eq:Delta-repr}
\nonumber
\sum_{n=1}^\infty \frac{\Delta_n}{n}s^n 
&=\sum_{j=1}^{\lfloor\frac{r}{2}\rfloor+1}\theta_j
\sum_{n=1}^\infty a^{(j+1)}_{{n-1}}s^n+\sum_{n=1}^\infty h_n s^n\\
&=\sum_{j=1}^{\lfloor\frac{r}{2}\rfloor+1}\theta_j(1-s)^{j-1/2}
+\sum_{n=1}^\infty h_n s^n.
\end{align}
Set $H(s)=\sum_{n=1}^\infty h_n s^n$. The property $h_n=o(n^{-(r+3)/2})$ implies that $H^{(k)}(1)$ is finite for every $k\le \lf r/2\rf$. We now set
$\psi_k:=(-1)^k\frac{H^{(k)}(1)}{k!}$
and
$$
H_0(s):=H(s)-\sum_{k=0}^{\lf r/2\rf}\psi_k(1-s)^k.
$$
Then
$$
H_0^{(k)}(1)=0
\quad\text{for all }k=0,1,\ldots,\lf r/2\rf.
$$
In other words, $H_0(s)$ is 'orthogonal' to all polynomials of degree 
$\lf r/2\rf$. Since the coefficients of this generating function are of order $o(n^{-(r+3)/2})$, we conclude that $H_0(s)\in \cH_{r}^0$.
Combining this with \eqref{eq:Delta-repr}, we conclude that 
$$
\sum_{n=1}^\infty \frac{\Delta_n}{n}s^n=
\sum_{j=1}^{\lfloor\frac{r}{2}\rfloor+1}\theta_j(1-s)^{j-1/2}
+\sum_{k=0}^{\lfloor\frac{r}{2}\rfloor}\psi_k(1-s)^k+H_0(s).
$$
Equivalently, $\sum_{n=1}^\infty \frac{\Delta_n}{n}s^n$ belongs to
$\cR_{0,r}^0$. Consider the corresponding decomposition:
$$
\sum_{n=1}^\infty \frac{\Delta_n}{n}s^n
=\sum_{i=0}^{r}q_i(1-s)^{i/2}+F(s)=:Q(s)+F(s).
$$
Plugging this representation into \eqref{eq:WF2}, we get 
\begin{align}
\label{eq:WF3}
\sum_{n=0}^\infty\pr(\tau_0>n)s^n
=\frac{e^{Q(s)+F(s)}}{\sqrt{1-s}}
=\frac{e^{Q(s)}}{\sqrt{1-s}}+\frac{e^{F(s)}-1}{\sqrt{1-s}}e^{Q(s)}.
\end{align}
Combining Lemma~\ref{lem:exp1-s} and Lemma~\ref{rlemma}(ii), we conclude that $e^{Q(s)}$ belongs to $\cR_{0,m}^0$ for every $m\ge 1$. Choosing $m=r$  we get the representation
\begin{align}
    e^{Q(s)}=\sum_{i=0}^{r}\mu_i(1-s)^{i/2}+G(s),
\quad G(s)\in\cH_{{r}}^0,
\end{align}
where $\mu_i, i=1,\cdots,r$ are some real numbers.
By Lemma \ref{lem:hlemma_3},
\begin{align*}
    \frac{e^{Q(s)}}{\sqrt{1-s}}=\sum_{i=0}^{r}\mu_i(1-s)^{(i-1)/2}+\hat{G}(s),
\quad \hat{G}(s)\in\cH_{{r-1}}^{\delta_r}.
\end{align*}
Recalling that $F(s)\in\cH_{r}^0$ and using Lemma~\ref{explemma}, we infer that $e^{F(s)}-1\in\cH_{r}^0$ and, by Lemma~\ref{lem:hlemma_3}, 
$$
\frac{e^{F(s)}-1}{\sqrt{1-s}}\in\cH_{r-1}^{{\delta_r}}.
$$
Applying Lemma~\ref{mlemma} and Lemma~\ref{lem:hlemma_2}, we obtain 
$$
\frac{e^{F(s)}-1}{\sqrt{1-s}}e^{Q(s)}\in\cH_{r-1}^{{\delta_{r}}}.
$$
Applying these properties to the corresponding terms in \eqref{eq:WF3}, we have 
\begin{align}
\label{eq:g.f.tau0}
\sum_{n=0}^\infty\pr(\tau_0>n)s^n-\sum_{i=0}^{r}\mu_i(1-s)^{(i-1)/2}
\in\cH_{r-1}^{\delta_r}.
\end{align}
In other words, $\sum_{n=0}^\infty\pr(\tau_0>n)s^n$ belongs to $\cR_{-1,r-1}^{\delta_r}$.
Applying now Lemma~\ref{lem:rdecomp} with $k=-1$ and $m=r-1$, we obtain
$$
\pr(\tau_0>n)=\sum_{j=0}^{\lf r/2\rf}\mu_{2j-1}a_n^{(j+1)}
+
O\left(\frac{\log^{{\delta_r}} n}{n^{(r+2)/2}}\right).
$$
Changing the index of summation we get the desired expansion with coeficients $\nu_j=\mu_{2j-3}$.

\subsection{Calculation of the first two correction coefficients}
In this paragraph we show how one can compute the coefficients in the expansion for $\tau_0$ and derive exact expressions for first two correction terms $\nu_2$ and $\nu_3$. Since the polynomials $Q_1$, $Q_3$,
$\widetilde{Q}_1$ and $\widetilde{Q}_3$ are coming from asymptotic expansions in the central limit theorem, we will consider these functions as a given input. We first compute the numbers $\theta_1$ and $\theta_2$ in Proposition~\ref{prop:S_n<0}.
According to Theorem VI.1 in Flajolet and Sedgewick \cite{FlajoletSedgewick},
$$
a_n^{(2)}
=\frac{n^{-3/2}}{\Gamma(-1/2)}\left(1+\sum_{k=1}^\infty\frac{e_k(2)}{n^k}\right)
=\frac{n^{-3/2}}{\Gamma(-1/2)}+\frac{3}{8}\frac{n^{-5/2}}{\Gamma(-1/2)}+O(n^{-7/2})
$$
and
$$
a_n^{(3)}
=\frac{n^{-5/2}}{\Gamma(-3/2)}\left(1+\sum_{k=1}^\infty\frac{e_k(3)}{n^k}\right)
=\frac{n^{-5/2}}{\Gamma(-3/2)}+O(n^{-7/2}).
$$
Therefore,
$$
n^{-5/2}=\Gamma(-3/2)a_n^{(3)}+O(n^{-7/2})
$$
and
\begin{align*}
n^{-3/2}&=\Gamma(-1/2)a_n^{(2)}-\frac{3}{8}n^{-5/2}+O(n^{-7/2})\\
&=\Gamma(-1/2)a_n^{(2)}-\frac{3}{8}\Gamma(-3/2)a_n^{(3)}+O(n^{-7/2}).
\end{align*}
Plugging these expressions into \eqref{eq:asymp2}, we get 
\begin{align*}
\frac{\Delta_n}{n}
&=-\frac{Q_1(0)}{n^{3/2}}-\frac{Q_3(0)}{n^{5/2}}+o(n^{-5/2})\\
&=-Q_1(0)\Gamma(-1/2)a_n^{(2)}
-\Bigl(Q_3(0)-\frac{3}{8}Q_1(0)\Bigr)\Gamma(-3/2)a_n^{(3)}+o(n^{-5/2}).
\end{align*}
This implies that 
$$
\theta_1 = 2 \sqrt{\pi} Q_1(0), \quad \theta_2 = -\frac{4\sqrt{\pi}}{3}\big(Q_3(0) - \frac{3}{8} Q_1(0)\big).
$$

Next determine the numbers $\psi_k$ for $k=0,1,2$. By the definition,
$$
\psi_k=(-1)^kH^{(k)}(1),
$$
where $H$ is the function defined directly after
\eqref{eq:Delta-repr}. 
It is easy to see that 
\begin{align*}
\psi_0 = \sum_{n=1}^\infty \frac{\Delta_n}{n},
\end{align*}
\begin{align*}
\psi_1 = - \sum_{n=1}^\infty n \cdot \left( \frac{\Delta_n}{n} - 2 \sqrt{\pi} Q_1(0) a_n^{(2)}\right)
\end{align*}
and
\begin{align*}
    \psi_2 = \sum_{n=1}^\infty \frac{n(n-1)}{2}\cdot \left( \frac{\Delta_n}{n} 
    {-}2 \sqrt{\pi} Q_1(0) a_n^{(2)}+\frac{4\sqrt{\pi}}{3}\left(Q_3(0) - \frac{3}{8}Q_1(0)\right) a_n^{(3)}\right).
\end{align*}
In the case $r=4$, the function $Q(s)$ from the proof of Theorem~\ref{thm:tau_0} is given by
\begin{align*}
    Q(s) = \sum_{i=0}^2 
    \psi_i (1-s)^i + \sum_{i=1}^2 
    \theta_i (1-s)^{i-1/2}.
\end{align*}
Then
\begin{align}
\label{eq:eQ}
e^{Q(s)} = e^{\psi_0}\prod_{i=1}^2 e^{ \phi_i (1-s)^i}\prod_{i=1}^2 e^{\theta_i (1-s)^{i-1/2}}.
\end{align}
Next we notice that
$$
e^{\psi_1(1-s)} = 1 + \psi_1(1-s) + \frac{(\psi_1)^2}{2}(1-s)^2+ 
\cH_4^0,
$$
$$
e^{\psi_2(1-s)^2} = 1 +  \psi_2(1-s)^2 +\cH_4^0,
$$
$$
e^{\theta_1(1-s)^{1/2}} = 1 + 
\sum_{i=1}^4 (\theta_1)^i (1-s)^{i/2}
+\cH_4^0
$$
and
$$
e^{\theta_2(1-s)^{3/2}} = 1 + \theta_2(1-s)^{3/2} + 
\cH_4^0,
$$
Applying these representations to the corresponding parts in \eqref{eq:eQ}
and using Lemma~\ref{lem:hlemma_2}, we conclude that
\begin{align*}
    e^{Q(s)} = e^{\psi_0}\sum_{i=0}^4 \mu_{i}(1-s)^{i} + \cH_4^0,
\end{align*}
where
$$
\mu_{0} = 1, \quad \mu_1 =\theta_1, 
$$ 
$$
\mu_2 = \psi_1 + (\theta_1)^2,
$$
$$ 
\mu_3 =  \theta_2 + \psi_1 \theta_1 + (\theta_1)^3
$$
and
$$ 
\mu_4 = \psi_2 + \frac{(\psi_1)^2}{2} + \psi_1 (\theta_1)^2 + \theta_2 \theta_1 + (\theta_1)^4.
$$

Finally it means that
$$
\sum_{n=0}^\infty\pr(\tau_0>n)s^n = \sum_{i=0}^{4}e^{\psi_0}\mu_i(1-s)^{(i-1)/2}
+ \cH_3^0.
$$
For the coefficients we have then the following representation:
\begin{align*}
\pr(\tau_0>n)s^n = &e^{\psi_0}a_n^{(1)} + e^{\psi_0}(\psi_1 + (\theta_1)^2) a_n^{(2)}\\ 
    &+
    e^{\psi_0}\left(\psi_2 + \frac{(\psi_1)^2}{2} + \psi_1 (\theta_1)^2 + \theta_2 \theta_1 + (\theta_1)^4\right) a_n^{(3)} 
    +O\left({n^{-3}} \right).
\end{align*}
\section{Expansions for conditioned probabilities}
\label{sec:loc_prob}
\subsection{Proof of Theorem~\ref{thm:cond_prob} in the case of fixed $x$}
\label{subs:expansion}
We shall prove \eqref{eq:cp1} only, since \eqref{eq:cp2} can be proven by absolutely the
same arguments. Moreover, we restrict our attention to the lattice case: we shall always assume that $X_1$ takes values on $\mathbb{Z}$ and that $\mathbb{Z}$ is the minimal lattice for $X_1$.

According to Lemma 17 from \cite{VW09}, the function $b_n(x):=\pr(S_n=x,\tau_0>n)$ satisfies the recurrence equation
\begin{align} \label{eq:brecurrent}
   n b_n(x)=\pr(S_n=x)+\sum_{k=1}^{n-1}\sum_{y=1}^{x-1}\pr(S_k=y)b_{n-k}(x-y).
\end{align}
For density case one can obtain analog of this relation from Lemma 15 \cite{VW09}. Multiplying both sides of equation \eqref{eq:brecurrent} with $s^{n-1}$ and taking the sum over all $n\ge1$,
we get 
\begin{align*}
\sum_{n=1}^\infty ns^{n-1}b_n(x)
&=\sum_{n=1}^\infty s^{n-1}\pr(S_n=x)
+\sum_{n=1}^\infty\sum_{k=1}^{n-1}\sum_{y=1}^{x-1}
s^{n-1}\pr(S_k=y)b_{n-k}(x-y)\\
&=\sum_{n=1}^\infty s^{n-1}\pr(S_n=x)
+\sum_{y=1}^{x-1}\sum_{k=1}^\infty s^{k-1}\pr(S_k=y)
\sum_{n=k+1}^\infty s^{n-k}b_{n-k}(y)\\
&=\sum_{n=1}^\infty s^{n-1}\pr(S_n=x)
+\sum_{y=1}^{x-1}\sum_{k=1}^\infty s^{k-1}\pr(S_k=y)
\sum_{n=1}^\infty s^{n}b_{n}(y).
\end{align*}
Setting now 
$$
B(y,s)=\sum_{n=1}^\infty s^{n}b_{n}(y)
$$
and
$$
\Psi(y,s)=\sum_{n=1}^\infty s^{n-1}\pr(S_n=y),
$$
we obtain the equation
\begin{align}
\label{eq:B-equation}
\frac{d}{d s}B(x,s)=\Psi(x,s)+\sum_{y=1}^{x-1}\Psi(y,s)B(x-y,s).
\end{align}

By Lemma 20 from \cite{VW09}, $b_n(x)\le C(x)n^{-3/2}$. Moreover, 
$\sum\limits_{n=1}^\infty b_n(x)$ is finite and equal to the mass function of the renewal function associated with strict increasing ladder epochs. Combining these facts, we infer that $B(x,s)-B(x,1)$ belongs to $\cH_0^0$. Consequently, $B(x,s)\in\cR_{0,0}^0$.

By Proposition \ref{prop:asymplattice},
$$
\pr(S_n=x)=\sum_{j=0}^{\lfloor r/2\rfloor}\theta_j(x)a_{n-1}^{(j+1)}+h_n(x),
$$
where
\begin{align*}
    |h_n(x)| \le C_r \frac{(|x|+1)^{r+1}}{n^{(r+2)/2}}.
\end{align*}
This representation implies that
\begin{align*}
\Psi(x,s)&=\sum_{j=0}^{\lf r/2\rf}\theta_j(x)
\sum_{n=1}^\infty s^{n-1}a_{n-1}^{(j+1)}+\sum_{n=1}^\infty s^nh_n(x)\\
&=\sum_{j=0}^{\lf r/2\rf}\theta_j(x)(1-s)^{j-1/2}
+
\sum_{n=1}^\infty s^nh_n(x).
\end{align*}
Next we define $\psi_j(x)$ as in Lemma \ref{lem:rdecomp}, namely
$\psi_{2j-1}(x)=\theta_j(x)$ and
\begin{align*}
    \psi_{2j}(x) = (-1)^i\sum_{n=i}^\infty \frac{n!}{(n-i)!}h_n.
\end{align*}
Then one has
\begin{align}
\label{eq:psi-decomp}
\Psi(x,s)=\sum_{j=-1}^{r-1}\psi_j(x)(1-s)^{j/2}+H_\Psi(x,s), \quad H_\Psi\in\cH_{r-1}^0.
\end{align}
Therefore, $\Psi(x,s)\in\cR_{-1,r-1}^0$. By Lemma~\ref{rlemma}, since $B(x,s) \in \cR_{0,0}^0$, the generating function
$\Psi(x,s)+\sum_{y=1}^{x-1}\Psi(y,s)B(x-y,s)$
belongs to $\cR_{-1,-1}^1$. Taking into account \eqref{eq:B-equation}
and applying Lemma~\ref{lem:derivative}, we conclude that 
$B(x,s)-B(x,1)\in \cR_{1,1}^1$ and, consequently, $B(x,s)\in\cR_{0,1}^1$. 
Applying Lemma~\ref{rlemma} once again, we infer that 
$\Psi(x,s)+\sum_{y=1}^{x-1}\Psi(y,s)B(x-y,s)$ belongs to $\cR_{-1,0}^2$. Lemma~\ref{lem:derivative} gives now $B(x,s)-B(s,1)\in\cR_{1,2}^2$ and 
$B(x,s)\in\cR_{0,2}^2$. Iterating this, we get 
$B(x,s)\in\cR_{0,r}^{r}$. This implies that $B(x,s)\in\cR_{0,r-1}^0$. Applying the described
above procedure one more time, we finally arrive at 
$$
B(x,s)\in\cR_{0,r}^{\delta_{r}}.
$$ 
This implies that there exist functions $q_i(x)$ such that
\begin{align}
\label{eq:B-decomp}
B(x,s)=\sum_{i=0}^{r} q_i(x)(1-s)^{i/2}+H_B(x,s), \quad  H_B\in\cH_{r}^{\delta_{r}}.
\end{align}
Applying now Lemma~\ref{lem:rdecomp}, we conclude that, for every fixed $x$,
\begin{align*}
    \pr(S_n=x, \tau_0 > n) = \sum_{j=1 }^{\lf \frac{r+1}{2} \rf}
    U_j(x)a_n^{(j+1)} + O\left(\frac{\log^{\delta_{r}} n}{n^{(r+3)/2}}\right).
\end{align*}
where $U_j(x) := q_{2j-1}(x)$.

To complete the proof of the theorem for fixed values of $x$ it remains to derive recurrence relation for the coefficients $q_j$. The case when $x$ is not necessarily bounded is more complicated and will be considered in the subsequent subsection.

Plugging \eqref{eq:B-decomp} and \eqref{eq:psi-decomp} into \eqref{eq:B-equation}, we have
\begin{align}
\label{eq:q-equation}
\nonumber
&-\sum_{i=1}^{r} q_i(x)\frac{i}{2}(1-s)^{i/2-1}+\frac{d}{ds}H_B(x,s)\\
\nonumber
&=\sum_{j=-1}^{r}\psi_j(x)(1-s)^{j/2}
+\sum_{y=1}^{x-1}\left(\sum_{j=-1}^{r}\psi_j(y)(1-s)^{j/2}\right)
\left(\sum_{i=0}^{r} q_i(x-y)(1-s)^{i/2}\right)\\
&\hspace{1cm}+H_\Psi(x)
+\sum_{y=1}^{x-1}\left[H_\Psi(y,s)B(x-y,s)+\Psi(y,s)H_B(x-y,s)\right].
\end{align}
Recalling that $B(y,s)\in\cR_{0,{r}}^1$, $\Psi(y,s)\in\cR_{-1,{r-1}}^0$ and applying Lemma~\ref{rlemma}(iii), we conclude that 
$$
H(x,s):=H_\Psi(x)
+\sum_{y=1}^{x-1}\left[H_\Psi(y,s)B(x-y,s)+\Psi(y,s)H_B(x-y,s)\right]
\in\cH_{r-1}^1.
$$ 
Multiplying \eqref{eq:q-equation} by $(1-s)^{1/2}$ and letting $s\to1$ we get the equality
\begin{equation}
\label{eq:q1}
-\frac{1}{2}q_1(x)=\psi_{-1}(x)+\sum_{y=1}^{x-1}\psi_{-1}(y)q_0(x-y).
\end{equation}
We know that $q_0(y)=\sum_{n=1}^\infty\pr(S_n=y,\tau_0>n)$ for every $y\ge1$.
Moreover, $\psi_{-1}(x)\equiv\frac{1}{\sqrt{2\pi}}$. Therefore, 
\begin{align*}
q_1(x)&=-2\frac{1}{\sqrt{2\pi}}\left(1+\sum_{y=1}^{x-1}q_0(x)\right)
=-\sqrt{\frac{2}{\pi}}\left(1+\sum_{n=1}^\infty\pr(S_n<x,\tau_0>n)\right)\\
&=-\sqrt{\frac{2}{\pi}}V(x).
\end{align*}
Combining \eqref{eq:q-equation} and \eqref{eq:q1},
we get 
\begin{align*}
&-\sum_{i=2}^{r} q_i(x)\frac{i}{2}(1-s)^{i/2-1}+\frac{d}{ds}H_B(x,s)\\
&=\sum_{j=0}^{r}\psi_j(x)(1-s)^{j/2}
+\sum_{y=1}^{x-1}\sum_{j=-1}^{r}\sum_{i=0}^{r}\psi_j(y)q_i(x)(1-s)^{(i+j)/2}{\rm 1}\{i+j\ge0\}\\
&\hspace{1cm}+H_\Psi(x)
+\sum_{y=1}^{x-1}\left[H_\Psi(y,s)B(x-y,s)+\Psi(y,s)H_B(x-y,s)\right].
\end{align*}
Letting $s\to1$, we conclude
that 
$$
-q_2(x)=\psi_0(x)+\sum_{y=1}^{x-1}\left(\psi_{-1}(y)q_1(x-y)
+\psi_0(y)q_0(x-y)\right).
$$
Substituting that once again in \eqref{eq:q-equation} and dividing by $(1-s)^{1/2}$ we obtain relation for $q_3(x)$.
Repeating this argument, we get for every $\ell \le r$ the equality 
\begin{align} \label{eq:qrecurrent}
   -\frac{\ell}{2}q_\ell(x)=\psi_{\ell-2}(x)+
\sum_{y=1}^{x-1}\sum_{j=-1}^{\ell-2}\psi_j(y)q_{\ell-2-j}(x-y). 
\end{align}
Thus, we can compute all $q_\ell$ recursively.

\subsection{Proof of Theorem~\ref{thm:cond_prob} for growing $x$.} Hier we always assume that $x\ge 0$ and write $C$ for any constant which are independent of $n$ and $x$, but may depend on other parameters of the random walk.

We first estimate the coefficients $\psi_j(x)$ in the representation 
\eqref{eq:psi-decomp}.
According to Proposition \ref{prop:asymplattice}, $\psi_{2j-1}(x) = \theta_j(x)$ is a polynomial of degree $2j$. This means, in particular, that  
\begin{align}
\label{eq:theta1}
|\psi_{2j-1}(x)|\le C(1+x^{2j}), \quad \quad j = 0, \dots, \lf r/2 \rf.
\end{align}
\begin{lemma} \label{lem:psi2j}
For every $j\le(r-1)/2$ one has
\begin{align} \label{ineq:psi2j}
    |\psi_{2j}(x)| \le C(1+x^{2j+1}).
\end{align}
\end{lemma}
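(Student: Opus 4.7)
The plan is to work from the explicit representation
\[
\psi_{2j}(x)=(-1)^j \sum_{n\ge j} n(n-1)\cdots(n-j+1)\, h_n(x)
\]
supplied by Lemma~\ref{lem:rdecomp}, where $h_n(x)$ denotes the remainder in the local expansion of Proposition~\ref{prop:asymplattice}. Plugging in the raw bound $|h_n(x)|\le C(1+x)^{r+1}/n^{(r+2)/2}$ would only yield $|\psi_{2j}(x)|\le C(1+x)^{r+1}$, which is strictly weaker than \eqref{ineq:psi2j} whenever $2j+1<r+1$. To win back the missing powers I will first truncate the expansion after its $(j+1)$-st term and then split the resulting sum at the natural scale $n\asymp x^{2}$.

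Apply Proposition~\ref{prop:asymplattice} with the smaller parameter $r'=2j+1$ in place of $r$ (admissible since $j\le(r-1)/2$ is equivalent to $r'+3\le r+3$). This produces the shorter expansion
\[
p_n(x)=\sum_{k=0}^{j}\theta_k(x)\,a_{n-1}^{(k+1)}+\tilde h_n(x),\qquad |\tilde h_n(x)|\le C\,\frac{(1+x)^{2j+2}}{n^{j+3/2}}.
\]
The difference $h_n-\tilde h_n$ is a linear combination of the terms $\theta_k(x)\,a_{n-1}^{(k+1)}$ with $j+1\le k\le\lfloor r/2\rfloor$, and for every such $k$ the generating-function identity
\[
\sum_{n\ge j} n(n-1)\cdots(n-j+1)\,a_{n-1}^{(k+1)}
=\left.\frac{d^j}{ds^j}\bigl(s(1-s)^{k-1/2}\bigr)\right|_{s=1}=0
\]
holds, because the smallest exponent of $(1-s)$ remaining after differentiation is $k-\tfrac12-j\ge\tfrac12$. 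The series on the left is absolutely convergent (its general term being $O(n^{-3/2})$ when $k=j+1$, and faster for larger $k$), so Abel's theorem legitimises the identity and gives
\[
\psi_{2j}(x)=(-1)^j\sum_{n\ge j} n(n-1)\cdots(n-j+1)\,\tilde h_n(x).
\]

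For $x\ge 1$ I then split this series at $N=\lceil x^2\rceil$. On the tail $n>N$ the bound on $\tilde h_n$ yields
\[
\sum_{n>N}n^j\cdot\frac{(1+x)^{2j+2}}{n^{j+3/2}}\le C(1+x)^{2j+2}\sum_{n>N}n^{-3/2}\le\frac{C(1+x)^{2j+2}}{\sqrt{N}}\le C(1+x)^{2j+1}.
\]
On the head $n\le N$ I bound $\tilde h_n(x)$ termwise by $|p_n(x)|+\sum_{k=0}^{j}|\theta_k(x)||a_{n-1}^{(k+1)}|$ using the standard estimates $|p_n(x)|\le C/\sqrt n$, $|\theta_k(x)|\le C(1+x)^{2k}$ and $|a_{n-1}^{(k+1)}|\le C/n^{k+1/2}$. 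The resulting pieces $\sum_{n\le N}n^{j-1/2}\sim x^{2j+1}$ and $\sum_{n\le N}(1+x)^{2k} n^{j-k-1/2}\sim(1+x)^{2k} x^{2(j-k)+1}=x^{2j+1}$ for each $k\le j$ combine to $Cx^{2j+1}$. Since $j<r/2$, the raw bound of Proposition~\ref{prop:asymplattice} already shows that $\psi_{2j}$ is uniformly bounded on $[0,1]$, finishing the proof.

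The main obstacle is the rearrangement step that replaces $h_n$ by $\tilde h_n$ in the series defining $\psi_{2j}(x)$: both $\sum n(n-1)\cdots(n-j+1)\tilde h_n(x)$ and the subtracted pieces $\sum n(n-1)\cdots(n-j+1)a_{n-1}^{(k+1)}$ for $k=j+1$ sit right on the boundary of absolute convergence, with general terms of order $n^{-3/2}$, so the manipulation has to be justified by Abel summation and the generating-function identity above rather than by a naive rearrangement. Once this is in place, the splitting at $n\asymp x^{2}$ is just a standard balancing of the two natural bounds on $\tilde h_n(x)$, with the cutoff chosen so that the crude tail estimate $(1+x)^{2j+2}/\sqrt{N}$ meets the head at exactly the claimed exponent $2j+1$.
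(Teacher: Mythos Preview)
Your proof is correct and follows essentially the same route as the paper: truncate the local expansion of Proposition~\ref{prop:asymplattice} at level $r'=2j+1$, split the resulting series for $\psi_{2j}(x)$ at $n\asymp x^{2}$, use the remainder bound $|\tilde h_n(x)|\le C(1+x)^{2j+2}n^{-j-3/2}$ on the tail and the crude bound $|\tilde h_n(x)|\le |p_n(x)|+\sum_{k\le j}|\theta_k(x)||a_{n-1}^{(k+1)}|$ on the head. You are actually more explicit than the paper about why the switch from $h_n$ to $\tilde h_n$ is legitimate (the paper simply says ``consider Proposition~\ref{prop:asymplattice} with $r=2j+1$''): your generating-function identity $\frac{d^j}{ds^j}\bigl(s(1-s)^{k-1/2}\bigr)\big|_{s=1}=0$ for $k\ge j+1$ is exactly the missing justification. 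One small comment: the series you worry about at the end have general term of order $n^{-3/2}$, which is absolutely summable, so the rearrangement is elementary and no appeal to Abel summation is needed.
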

\begin{proof}

It follows from \eqref{eq:psi-decomp} that 
$$
\psi_{2j}(x)=(-1)^j\frac{d^{j}}{ds^{j}}\left(\sum_{n=1}^\infty h_{n+1}(x)s^n\right)\Big|_{s=1}.
$$
Consequently,
\begin{align}
\label{eq:theta2}
|\psi_{2j}(x)|\le \sum_{n=1}^\infty n^{j}|h_{n+1}(x)|.
\end{align}
From now on fix some $j$ and consider Proposition \ref{prop:asymplattice} with $r=2j+1$, then for corresponding sequence $h_n$ it holds
\begin{equation*}
    |h_{n+1}(x)| \le C \frac{(1 +x)^{2j+2}}{n^{(2j+3)/2}}
\end{equation*}
and, consequently,
\begin{align*}
    \sum_{n > x^2}^\infty h_{n+1}(x)n^{j} 
    \le C (1 + x)^{2j+2}\sum_{n >  x^2}^\infty 
    \frac{1}{n^{3/2}}. 
\end{align*}
It is clear that
\begin{equation*}
    \sum_{n > x^2}^\infty \frac{1}{n^{3/2}} 
    \le \int_{x^2}^\infty t^{-3/2} d t 
    =\frac{2}{x},\quad x\ge 1.
\end{equation*}
Therefore,
\begin{align}
\label{eq:theta3}
    \sum_{n > x^2}^\infty h_{n+1}(x)n^{j} \le C(1 + x)^{2j+1}.
\end{align}
For $n\le x^2$ we have the following obvious estimate
\begin{align*}
    |h_{n+1}(x)| \le |p_{n+1}(x)| + \sum_{\ell=0}^{j} | \psi_{2\ell-1}(x)a_{n}^{(\ell+1)}|.
\end{align*}
First of all, using the concentration bound for $p_{n+1}(x)$, we have
\begin{align*}
    |p_{n+1}(x)| \le \frac{C}{\sqrt{n}}
    \le C \frac{(1+x)^{2j}}{n^{(2j+1)/2}},\quad n\le x^2.
\end{align*}
Next, since $\psi_{2\ell-1}(x)$ is of degree $2\ell$, 
\begin{align*}
     |\psi_{2\ell - 1}(x) a_{n}^{(\ell +1 )}| \le C \frac{(1+x)^{2\ell}}{n^{\ell + 1/2}} \le C \frac{(1+x)^{2j}}{n^{(2j+1)/2}}.
\end{align*}
This yields
\begin{equation}
\label{eq:theta4}
    \sum_{n=1}^{x^2} |h_{n+1}(x)|n^{j} \le C (1+x)^{2j} \sum_{n=1}^{x^2} \frac{1}{\sqrt{n}} \le C (1+x)^{2j+1}.
\end{equation}
The proof is completed by plugging \eqref{eq:theta3} and \eqref{eq:theta4} into \eqref{eq:theta2}.
\end{proof}
Also we need an estimation for the functions $q_j(x)$:
\begin{lemma} \label{lem:qpower}
    For every $j\le r$ one has
\begin{align}\label{eq:qpower}
    |q_j(x)| \le C (1+x)^j.
\end{align}
\end{lemma}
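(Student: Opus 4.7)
The plan is to prove \eqref{eq:qpower} by strong induction on $j$, exploiting the explicit recurrence \eqref{eq:qrecurrent} derived in the preceding subsection. The base cases $j=0$ and $j=1$ are in hand. The function $q_0(x)=B(x,1)=\sum_{n=1}^\infty\pr(S_n=x,\tau_0>n)$ equals the mass at $\{x\}$ of the renewal measure of the strictly ascending ladder heights and, since $\e H_1<\infty$ under the finite variance assumption, is bounded uniformly in $x$ by Blackwell's renewal theorem. The explicit identity $q_1(x)=-\sqrt{2/\pi}\,V(x)$ derived in Subsection~\ref{subs:expansion}, combined with the classical linear bound $V(x)\le C(1+x)$ on the renewal function, gives $|q_1(x)|\le C(1+x)$.

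The key preliminary observation for the inductive step is a unified bound on the coefficients $\psi_m$. From \eqref{eq:theta1} (for odd $m=2j-1$, where $\psi_m=\theta_j$ is a polynomial of degree $2j=m+1$) and Lemma~\ref{lem:psi2j} (for even $m=2j$, where the bound is $(1+x)^{2j+1}=(1+x)^{m+1}$), together with the fact that $\psi_{-1}\equiv 1/\sqrt{2\pi}$ is constant, one reads off
\begin{equation*}
|\psi_m(x)|\le C(1+x)^{m+1},\qquad m=-1,0,1,\ldots,r-1.
\end{equation*}

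For the inductive step I would fix $\ell\ge 2$, assume the claim for all smaller indices, and apply the triangle inequality to \eqref{eq:qrecurrent}, substituting the unified bound above together with the inductive hypothesis $|q_{\ell-2-j}(x-y)|\le C(1+x-y)^{\ell-2-j}$. This yields
\begin{equation*}
|q_\ell(x)|\le C(1+x)^{\ell-1}+C\sum_{j=-1}^{\ell-2}\sum_{y=1}^{x-1}(1+y)^{j+1}(1+x-y)^{\ell-2-j}.
\end{equation*}
Each inner sum is a discrete convolution with nonnegative integer exponents summing to exactly $\ell-1$, comparable to the Beta-type integral $\int_0^x(1+y)^{j+1}(1+x-y)^{\ell-2-j}\,dy = O((1+x)^\ell)$; the endpoint cases $j=-1$ and $j=\ell-2$ (where one exponent degenerates to zero) cause no difficulty and no logarithmic factor arises. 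Summing over the finitely many values $j\in\{-1,\ldots,\ell-2\}$ then gives $|q_\ell(x)|\le C(1+x)^\ell$, closing the induction.

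The main obstacle is less the convolution estimate itself, which is routine, than the bookkeeping that makes the induction close: one needs \emph{exactly} $|\psi_m(x)|\lesssim(1+x)^{m+1}$ so that the two exponents $j+1$ and $\ell-2-j$ add to precisely $\ell-1$ and the resulting convolution is $O((1+x)^\ell)$ rather than $O((1+x)^{\ell+1})$. A slightly weaker bound on $\psi_{2j}$ than the one in Lemma~\ref{lem:psi2j} would destroy the argument, so that lemma is used at its sharpest.
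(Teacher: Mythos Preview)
Your proposal is correct and follows essentially the same approach as the paper's proof: induction on $j$ with base cases $q_0,q_1$ handled via the renewal function, the unified bound $|\psi_m(x)|\le C(1+x)^{m+1}$ assembled from \eqref{eq:theta1} and Lemma~\ref{lem:psi2j}, and the recurrence \eqref{eq:qrecurrent} closing the induction. The paper simply writes ``Plugging these estimates into \eqref{eq:qrecurrent}, one obtains recursively $|q_j(x)|\le C(1+x^j)$'' without spelling out the convolution estimate, which you have made explicit.
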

\begin{proof}
    Recall that 
\begin{align*}
q_0(x)=\sum_{n\ge0}\pr(S_n=x,\tau_0>n)=V(x+1)-V(x)
\end{align*}
and 
$$
q_1(x)=-\sqrt{\frac{2}{\pi}}V(x),
$$
where $V(x)$ is the renewal function corresponding to ascending ladder heights
$\{\chi^+_k\}$.
By the local renewal theorem, $V(x+1)-V(x)$ converges, as $x\to\infty$, to 
$1/\e\chi^+_1$. This implies that
\begin{align*}
|q_0(x)|\le C\quad\text{and}\quad 
|q_1(x)|\le C(1+x).
\end{align*}
Furthermore, we know that $|\psi_j(x)| \le C(1 + x^{j+1})$ for all $j$.
Plugging these estimates into \eqref{eq:qrecurrent}, one obtains recursively
\begin{align*}
|q_j(x)| \le C(1+x^{j}) \quad j=2,3,\ldots,r.
\end{align*}
\end{proof}
Now we can finally start proving Theorem.
Let us rewrite \eqref{eq:brecurrent} in the following way:
\begin{align}
\label{eq:b_n-new}
   n b_n(x)=p_n(x)+\sum_{k=1}^{n-1}\sum_{y=1}^{x-1}p_k(y)b_{n-k}(x-y),
\end{align}
where $p_n(x) = \pr (S_n = x)$.
We split elements of both sequences into two parts, regular part and a remainder.
More precisely, we define
\begin{align*}
    b_n^{\text{reg}}(x) := \sum_{j=0}^r q_j(x)[s^n](1-s)^{j/2}, 
    \quad 
    b_n^{\text{rest}}(x) := b_n(x) -b_n^{\text{reg}}(x), \quad n\ge 0
\end{align*}
and 
\begin{align*}
    p_n^{\text{reg}}(x) = \sum_{j=-1}^{r-1} \psi_j(x) [s^{n-1}](1-s)^{j/2},
    \quad 
    p_n^{\text{rest}}(x) = p_n(x) -p_n^{\text{reg}}(x), \quad n\ge 1.
\end{align*}
Note that $b_0(x)=0$ for $x\ge1$ but it is not true anymore for $b_0^{\text{reg}}$ and $b_0^{\text{rest}}$.
Combining Proposition \ref{prop:asymplattice} and Lemma \ref{lem:psi2j} gives us the bound
\begin{align}
\label{eq:p_n^rest}
|p^{\text{rest}}_n(x)| \le C \frac{(1 + x)^{r+1}}{n^{(r+2)/2}},\quad n\ge1.
\end{align}
The sequence $b_n^{\text{reg}}$ almost satisfy the same relation as $b_n$ itself. Namely the following lemma holds.
\begin{lemma} \label{lem:bnregrec}
For $n \ge r+1$ it holds
\begin{align} \label{eq:bnregrec}
    \left|
    nb_n^{\text{reg}}(x) -p^{\text{reg}}_{n}(x) - 
\sum_{y=1}^{x-1}\sum_{k=1}^{n} p^{\text{reg}}_k(y) b^{\text{reg}}_{n-k}(x-y)
\right| \le C\frac{(1+x)^{r+1}}{n^{(r+1)/2}},\quad x\le\sqrt{n}.
\end{align}
\end{lemma}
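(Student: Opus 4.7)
The plan is to pass to generating functions and exploit the recurrence \eqref{eq:qrecurrent}. Introducing
$$
B^{\text{reg}}(x,s):=\sum_{j=0}^{r}q_j(x)(1-s)^{j/2},\qquad \hat{\Psi}^{\text{reg}}(x,s):=\sum_{j=-1}^{r-1}\psi_j(x)(1-s)^{j/2},
$$
one has $nb_n^{\text{reg}}(x)=[s^{n-1}]\tfrac{d}{ds}B^{\text{reg}}(x,s)$, $p_n^{\text{reg}}(x)=[s^{n-1}]\hat{\Psi}^{\text{reg}}(x,s)$, and, by the Cauchy product formula, $\sum_{k=1}^{n}p_k^{\text{reg}}(y)b_{n-k}^{\text{reg}}(x-y)=[s^{n-1}]\hat{\Psi}^{\text{reg}}(y,s)B^{\text{reg}}(x-y,s)$. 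Hence the quantity inside the modulus in \eqref{eq:bnregrec} equals $[s^{n-1}]R(x,s)$, where
$$
R(x,s):=\frac{d}{ds}B^{\text{reg}}(x,s)-\hat{\Psi}^{\text{reg}}(x,s)-\sum_{y=1}^{x-1}\hat{\Psi}^{\text{reg}}(y,s)B^{\text{reg}}(x-y,s).
$$

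The next step is to compute $R$ explicitly. Writing $\tfrac{d}{ds}B^{\text{reg}}(x,s)=-\sum_{\ell=-1}^{r-2}\tfrac{\ell+2}{2}q_{\ell+2}(x)(1-s)^{\ell/2}$ and regrouping the triple product $\psi_j(y)q_i(x-y)(1-s)^{(i+j)/2}$ by $\ell=i+j$, the coefficient of $(1-s)^{\ell/2}$ in $R$ vanishes \emph{identically} for every $\ell\in\{-1,0,\ldots,r-2\}$ by the recurrence \eqref{eq:qrecurrent} applied with index $\ell+2$ (note that when $\ell\le r-2$ the constraint $i=\ell-j\le r$ is automatic). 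What is left is the tail
$$
R(x,s)=-\psi_{r-1}(x)(1-s)^{(r-1)/2}-\sum_{\ell=r-1}^{2r-1}(1-s)^{\ell/2}\sum_{y=1}^{x-1}\sum_{\substack{i+j=\ell\\ -1\le j\le r-1,\ 0\le i\le r}}\psi_j(y)q_i(x-y).
$$

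To conclude, I would apply the elementary bound $|[s^{n-1}](1-s)^{\ell/2}|\le Cn^{-\ell/2-1}$ (valid for $n\ge r+1$), combined with $|\psi_j(y)|\le C(1+y^{j+1})$ from \eqref{eq:theta1} and Lemma \ref{lem:psi2j}, together with $|q_i(z)|\le C(1+z^i)$ from Lemma \ref{lem:qpower}. A direct estimate gives $\sum_{y=1}^{x-1}|\psi_j(y)||q_i(x-y)|\le Cx^{\ell+2}$, so each surviving summand contributes at most
$$
C\frac{x^{\ell+2}}{n^{\ell/2+1}}=C\frac{(1+x)^{r+1}}{n^{(r+1)/2}}\left(\frac{x^2}{n}\right)^{(\ell-r+1)/2},
$$
and the extra factor is bounded by $1$ under the hypothesis $x\le\sqrt{n}$ since $\ell-r+1\ge 0$. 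The unmatched term $\psi_{r-1}(x)(1-s)^{(r-1)/2}$ yields at most $C(1+x)^r n^{-(r+1)/2}$, which is likewise absorbed into the target bound. The main subtlety is the bookkeeping that isolates the exact range $\ell\in\{r-1,\ldots,2r-1\}$ of surviving powers of $(1-s)^{1/2}$, ensuring that the leading surviving exponent is $(r-1)/2$, which is precisely what produces the decay $n^{-(r+1)/2}$; once this is in place the remaining steps are routine.
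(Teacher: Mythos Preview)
Your proof is correct and follows essentially the same route as the paper: pass to generating functions, use the recurrence \eqref{eq:qrecurrent} to cancel the coefficients of $(1-s)^{\ell/2}$ for $\ell\le r-2$, and bound the surviving tail $\ell\in\{r-1,\ldots,2r-1\}$ termwise using the growth estimates for $\psi_j$ and $q_i$ together with $x\le\sqrt n$. The only cosmetic differences are that you package the even-$\ell$ vanishing into the uniform bound $|[s^{n-1}](1-s)^{\ell/2}|\le Cn^{-\ell/2-1}$ rather than treating it separately, and you carry the correct sign on the tail sum.
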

\begin{proof}
Define
\begin{align*}
    B^{\text{reg}}(x,s) = \sum_{n=0}^\infty b_n^{\text{reg}}(x)s^n = \sum_{\ell=0}^{r}q_\ell(x)(1-s)^{\ell/2}
\end{align*}
and
\begin{align*}
    \Psi^{\text{reg}}(x,s) = \sum_{n=1}^\infty p_n^{\text{reg}}(x)s^{n-1} = \sum_{\ell=0}^{r-1}\psi_\ell(x)(1-s)^{\ell/2}.
\end{align*}
Then, using the recurrent relation \eqref{eq:qrecurrent}, one has, with $\omega=1-s$,
\begin{align*}
    \frac{d}{ds}B^{\text{reg}}(x,s) 
    =&
    \sum_{\ell=1}^{r}-\frac{\ell}{2}q_\ell(x)\omega^{\ell/2-1}=
    \sum_{\ell=-1}^{r-2}-\frac{\ell+2}{2}q_{\ell+2}(x)\omega^{\ell/2}\\
    =&\sum_{\ell=-1}^{r-2} 
    \left(
    \psi_\ell(x) + \sum_{y=1}^{x-1} \sum_{j=-1}^\ell \psi_j(y)q_{\ell-j}(x-y)
    \right)
    \omega^{\ell/2}\\
    =& \Psi^{\text{reg}}(x,s) - \psi_{r-1}(x)\omega^{(r-1)/2}\\
    +&\sum_{\ell=-1}^{r-2}\sum_{y=1}^{x-1}\sum_{j=-1}^\ell \psi_j(y)\omega^{j/2}\cdot q_{\ell-j}(x-y)\omega^{(\ell-j)/2}.
\end{align*}
For fixed $y$ one has
\begin{multline*}
    \sum_{\ell=-1}^{r-2}\sum_{j=-1}^\ell \psi_j(y)\omega^{j/2}\cdot q_{\ell-j}(x-y)\omega^{(\ell-j)/2}
    \\= B^{\text{reg}}(x-y,s) \Psi^{\text{reg}}(y,s) 
    -\sum_{i=-1}^{r-1} \sum_{j=0}^{r}
    \mathds{1}_{\{i+j \ge  r-1\}}
    \psi_i(y)q_j(x-y)\omega^{(i+j)/2}.
\end{multline*}
As a result we have 
\begin{multline*}
\frac{d}{ds}B^{\text{reg}}(x,s)-
    \Psi^{\text{reg}}(x,s)-
    \sum_{y=1}^{x-1}B^{\text{reg}}(x-y,s) \Psi^{\text{reg}}(y,s) \\ 
    =-\psi_{r-1}(x)(1-s)^{(r-1)/2} + 
    \sum_{y=1}^{x-1}\sum_{i=-1}^{r-1} \sum_{j=0}^{r}
    \mathds{1}_{\{i+j \ge  r-1\}}
    \psi_i(y)q_j(x-y)(1-s)^{(i+j)/2}.
\end{multline*}
It is obvious that the $(n-1)$th coefficient of the generating function on the left hand side equals to 
$$
nb_n(x)-p_n(s)-\sum_{y=1}^{x-1}\sum_{k=1}^{n-1}p^{\text{reg}}_k(y) b^{\text{reg}}_{n-k}(x-y).
$$
Therefore, we need an appropriate estimate for the corresponding coefficient of the function on the right hand side.

If $i\le r-1$, $j\le r$ ans $i+j$ is even, then 
$[s^n](1-s)^{(i+j)/2}=0$ for all $n\ge r>(i+j)/2$.
Consider now the case when  $i+j$ is odd and $i+j\ge r-1$. Then for $x\le \sqrt{n}$ and $0 < y < x$, applying \eqref{eq:theta1}, \eqref{ineq:psi2j} and \eqref{eq:qpower}, we get
\begin{align*}
    |\psi_i(y)q_j(x-y)[s^{n-1}](1-s)^{(i+j)/2}| \le C \frac{(1+x)^{i+j+1}}{n^{(i+j)/2+1}} \le C \frac{(1+x)^{r}}{n^{(r+1)/2}}.
\end{align*}
Summing over $y$ we finally arrive at
\begin{align*}
\left|[s^{n-1}]\sum_{y=1}^{x-1}\sum_{i=-1}^{r-1} \sum_{j=0}^{r}
    \mathds{1}_{\{i+j \ge  r-1\}}
    \psi_i(y)q_j(x-y)(1-s)^{(i+j)/2}\right|
    \le C \frac{(1+x)^{r+1}}{n^{(r+1)/2}}.
\end{align*}
Combining Lemma~\ref{lem:qpower} with the known asymptotics for $a_n^{(j)}$, it is easy to see that
$$
\left|[s^{n-1}\psi_{r-1}(x)(1-s)^{(r-1)2}]\right|
\le C \frac{(1+x)^{r+1}}{n^{(r+1)/2}}.
$$
This completes the proof.
\end{proof}

We next turn to upper bounds for $b_n^{\text{rest}}(x)$.
It turns out that the case $x\ge \sqrt{n}$ is much simpler. 
Indeed, using \eqref{eq:qpower} and the uniform bound $b_n(x) \le \frac{C(1+x)}{n^{3/2}}$ one gets for $n > r$ the following estimate:
\begin{align*}
    |b_n^{\text{rest}}(x)| \le |b_n(x)| + |b_n^{\text{reg}}| &\le
    \frac{C(1+x)}{{n^{3/2}}} + \sum_{j=1}^r |q_j(x)[s^n](1-s)^{j/2}|\\
    &=
    \frac{C(1+x)}{{n^{3/2}}} + \sum_{\ell=1}^{\lf \frac{r+1}{2}\rf} |q_{2\ell-1}(x)a_n^{(\ell+1)}|\\
    &\le
    \frac{C(1+x)}{{n^{3/2}}} + \sum_{\ell=1}^{\lf \frac{r+1}{2}\rf}
    \frac{C(1+x)^{2\ell-1}}{n^{\ell+1/2}} \le \frac{C(1+x)^{r+1}}{n^{(r+3)/2}}.
\end{align*}
For the case $x\le\sqrt{n}$ we have a bit rougher estimate.
\begin{lemma}
\label{lem:b-bound}
For every $n\ge 2$ one has 
    \begin{align*}
b_n^{\text{rest}}(x)\le C(1+x)^{r+1}\frac{\log^{\lf \frac{r}{2}\rf} n}{n^{(r+3)/2}},\quad x\le\sqrt{n}.
    \end{align*}
\end{lemma}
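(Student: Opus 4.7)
The strategy is strong induction on $n$, based on the recurrence for $b_n^{\text{rest}}$ obtained by subtracting Lemma~\ref{lem:bnregrec} from the exact identity \eqref{eq:b_n-new}. First I write $p_k = p_k^{\text{reg}} + p_k^{\text{rest}}$ and $b_m = b_m^{\text{reg}} + b_m^{\text{rest}}$ everywhere, use the identity $p_k b_{n-k} - p_k^{\text{reg}} b_{n-k}^{\text{reg}} = p_k^{\text{reg}} b_{n-k}^{\text{rest}} + p_k^{\text{rest}} b_{n-k}$, and account for the endpoint mismatch in the summation ranges, arriving at
\[
nb_n^{\text{rest}}(x) = F_n(x) + \sum_{k=1}^{n-1}\sum_{y=1}^{x-1} p_k^{\text{reg}}(y) b_{n-k}^{\text{rest}}(x-y) + \sum_{k=1}^{n-1}\sum_{y=1}^{x-1} p_k^{\text{rest}}(y) b_{n-k}(x-y),
\]
where the forcing $F_n(x)$ gathers $p_n^{\text{rest}}(x)$, the explicit residual of Lemma~\ref{lem:bnregrec}, and the boundary contribution $-\sum_y p_n^{\text{reg}}(y) b_0^{\text{reg}}(x-y)$. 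Combining Proposition~\ref{prop:asymplattice}, Lemma~\ref{lem:psi2j}, Lemma~\ref{lem:qpower} and Lemma~\ref{lem:bnregrec} yields $|F_n(x)|\le C(1+x)^{r+1}/n^{(r+1)/2}$, so after dividing by $n$ the forcing already sits under the target $T_n(x):=(1+x)^{r+1}\log^{\lf r/2\rf}n/n^{(r+3)/2}$.

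I then set up the strong induction. Assume $|b_m^{\text{rest}}(z)|\le C^* T_m(z)$ for every $m<n$ and every $z\ge 0$; this uniform statement combines the induction target on $\{z\le\sqrt m\}$ with the already-established sharper bound $|b_m^{\text{rest}}(z)|\le C(1+z)^{r+1}/m^{(r+3)/2}$ on $\{z>\sqrt m\}$ proved in the paragraph just before the lemma. To close the induction on $\{x\le\sqrt n\}$ I estimate the two convolutions separately. For the remainder--full convolution I plug in $|p_k^{\text{rest}}(y)|\le C(1+y)^{r+1}k^{-(r+2)/2}$ and $|b_{n-k}(z)|\le C(1+z)(n-k)^{-3/2}$; the weight $(1+y)^{r+1}(1+x-y)$ summed over $1\le y\le x-1$ is $O((1+x)^{r+3})$, and on $\{x\le\sqrt n\}$ I trade the extra $(1+x)^2$ for one power of $n$, recovering a contribution of order $T_n(x)$. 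For the regular--remainder convolution I apply the induction hypothesis to $b_{n-k}^{\text{rest}}(x-y)$, bound $(1+x-y)^{r+1}\le(1+x)^{r+1}$ and pull out $C^*(1+x)^{r+1}\log^{\lf r/2\rf}n$; I am then reduced to controlling $\sum_{k=1}^{n-1}(n-k)^{-(r+3)/2}\sum_{y=1}^{x-1}|p_k^{\text{reg}}(y)|$. Splitting $k$ at $n/2$ and writing $|p_k^{\text{reg}}(y)|\le p_k(y)+|p_k^{\text{rest}}(y)|$, I use the concentration estimate $\sum_{y=1}^{x-1}p_k(y)\le C\min(1,x/\sqrt k)$ together with the polynomial bound on $p_k^{\text{rest}}$ to show the double sum is $o(n^{1-(r+3)/2})$. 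This forces the regular--remainder contribution to be at most $\tfrac12 C^* T_n(x)$ for $n$ large; choosing $C^*$ sufficiently large (and treating small $n$ separately) closes the induction.

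The main obstacle is controlling the regular--remainder convolution in the regime $k$ close to $n$, where $(n-k)^{-(r+3)/2}$ is large. The crude estimate $|p_k^{\text{reg}}(y)|\le C/\sqrt k$ combined with the induction hypothesis would leave a divergent $k$-sum and would destroy any hope of closing the induction. The remedy is to collapse the $y$-sum before bounding $p_k^{\text{reg}}$, exploiting the concentration of $S_k$ on $[1,x-1]$: for $x\le\sqrt n$ one has $\sum_{y=1}^{x-1}p_k(y)\le Cx/\sqrt k$, and this extra factor vanishes as $k\to n$, providing exactly the summability required. This is precisely where the hypothesis $x\le\sqrt n$ enters the argument, and it explains why the $\log^{\lf r/2\rf}n$ factor appears in this regime but not in the complementary one.
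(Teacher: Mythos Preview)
Your induction on $n$ cannot close at the regular--remainder convolution. You claim that
\[
S_n:=\sum_{k=1}^{n-1}(n-k)^{-(r+3)/2}\sum_{y=1}^{x-1}|p_k^{\text{reg}}(y)|
\]
is $o(n^{-(r+1)/2})$, but this is false. Look at the range $k>n/2$ and write $m=n-k$. Since the leading term of $p_k^{\text{reg}}(y)$ is $\psi_{-1}(y)a_{k-1}^{(1)}\sim c/\sqrt{k}$, the concentration estimate gives $\sum_{y=1}^{x-1}|p_k^{\text{reg}}(y)|\asymp x/\sqrt{k}\asymp x/\sqrt{n}$, while $\sum_{m\ge 1}m^{-(r+3)/2}$ converges to a positive constant. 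Hence $S_n\gtrsim x/\sqrt{n}$, and for $x$ of order $\sqrt{n}$ this is bounded below by a positive constant. After you divide by $n$ and compare with $T_n(x)$, the regular--remainder contribution is bounded only by a constant times $C^*\,(1+x)^{r+2}\,\log^{\lfloor r/2\rfloor}n\,/\,n^{3/2}$, which exceeds $\tfrac12 C^*T_n(x)$ by a factor of order $(1+x)n^{r/2}$; enlarging $C^*$ does not help because $C^*$ sits on both sides.

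The paper avoids this by running induction on the moment index $r$, not on $n$. The hypothesis for $r-1$ places the generating function of $\{b_m^{\text{rest}}(z)\}_m$ in the class $\cH_{r-1}^{\lfloor(r-1)/2\rfloor}$, and the convolution with $p_k^{\text{reg}}=\sum_\ell\psi_{2\ell-1}\,a_{k-1}^{(\ell+1)}$ is then controlled through Lemma~\ref{lem:partial}, not by the triangle inequality. That lemma exploits the orthogonality built into $\cH_m^r$ (summation by parts against the $a^{(j)}$'s) and gains precisely the missing factor $n^{-(r-1)/2}$ that your absolute-value bound throws away. Without a device of this kind, an induction on $n$ with fixed $r$ cannot succeed.
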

\begin{proof}
It suffices to prove the desired bound for $n$
large enough since for bounded $n$ we have to show only dependence in $x$ and it can be seen the same way as the case $x\ge \sqrt{n}$. Due to this observation, we shall assume that $n>r+2$.

We shall obtain the desired estimate by induction. 
According to Lemma 20 from \cite{VW09},
\begin{equation}
\label{eq:vw-bound}
    b_n(x) \le C \frac{1 + x}{n^{3/2}} . 
\end{equation}
This gives the desired bound for $r=0$.
For every $y\in(0,x)$ we split the sum 
$\sum_{k=1}^{n-1}p_k(y)b_{n-k}(x-y)$ into two parts:
\begin{align} \label{eq:seqsplit}
     \sum_{k=1}^{n-1} p_k(y) b_{n-k}(x-y) 
     = \sum_{k=1}^{N-1} p_k(y) b_{n-k}(x-y) 
     + \sum_{k=N}^{n-1} p_k(y) b_{n-k}(x-y),
\end{align}
where $N =  \lceil{n/2}\rceil$.
To estimate the first sum we use the decomposition for $b_n(x-y)$:
\begin{align*}
    \sum_{k=1}^{N-1} p_k(y) b_{n-k}(x-y) 
    = \sum_{k=1}^{N-1} p_k(y) b^{\text{reg}}_{n-k}(x-y) + 
    \sum_{k=1}^{N-1} p_k(y) b^{\text{rest}}_{n-k}(x-y).
\end{align*}
Due to the local central limit theorem, $p_k(y) \le \frac{C}{\sqrt{k}}$ uniformly in $y$. Moreover, it follows from the  induction assumption that 
$$
b_{n-k}^{\text{rest}}(x-y)\le C(x-y)^r\frac{\log^{\lf \frac{r-1}{2}\rf}n}{n^{(r+2)/2}}
$$ 
for every $k < N$. Combining these estimates, we conclude that,
uniformly in $y$,
\begin{align}
\label{eq:p-b^rest}
    \sum_{k=1}^{N-1} p_k(y) b^{\text{rest}}_{n-k}(x-y) 
    \le C
    \frac{(x-y)^{r}\log^{\lf \frac{r-1}{2}\rf}n}{n^{\frac{r+2}{2}}} \sum_{k=1}^{N-1} \frac{1}{\sqrt{k}} 
    \le C
    \frac{x^{r} \log^{\lf \frac{r-1}{2}\rf} n}{n^{\frac{r+1}{2}}}.
\end{align}
Applying the decomposition for $p_k(y)$ and recalling that 
$$
b^{\text{reg}}_{n-k}(x-y)=\sum_{j=0}^r q_j(x-y)[s^{n-k}](1-s)^{j/2},
$$
we get
\begin{align*}
    \sum_{k=1}^{N-1} p_k(y) b^{\text{reg}}_{n-k}(x-y) &=
    \sum_{k=1}^{N-1} p^{\text{reg}}_k(y) b^{\text{reg}}_{n-k}(x-y)
    \\
    &+
    \sum_{j=0}^{r}q_j(x-y)\sum_{k=1}^{N-1} p^{\text{rest}}_k(y) [s^{n-k}](1-s)^{j/2}.
\end{align*}
We first notice that if $n > r$ then $[s^{n-k}](1-s)^{\ell} = 0$ for all integers $0\le\ell\le r/2$ and all $k<N$. Therefore, 
\begin{align*}
&\sum_{j=0}^{r}q_j(x-y)\sum_{k=1}^{N-1} p^{\text{rest}}_k(y) 
[s^{n-k}](1-s)^{j/2}\\
&\hspace{1cm}=\sum_{\ell=1}^{\lf \frac{r+1}{2}\rf}q_{2\ell-1}(x-y)\sum_{k=1}^{N-1} p^{\text{rest}}_k(y) 
[s^{n-k}](1-s)^{\ell-1/2}\\
&\hspace{1cm}=\sum_{\ell=1}^{\lf \frac{r+1}{2}\rf}q_{2\ell-1}(x-y)\sum_{k=1}^{N-1} p^{\text{rest}}_k(y)
a_{n-k}^{(\ell+1)}.
\end{align*}
Set $p_0^{\text{rest}}(y)=0$. 
By the definition, $\sum_{n=1}^\infty p_n^{\text{rest}}(y)s^{n-1}$ belongs to $\cH_{r-1}^0$. Therefore, the function
$\sum_{n=0}^\infty p_n^{\text{rest}}(y) s^n$ also belongs then to the same class $\cH_{r-1}^0$.
Combining \eqref{eq:qpower} and \eqref{eq:p_n^rest}  
and applying Lemma~\ref{lem:partial} with $j =\ell+1$, we get, uniformly in 
$y<x$,
\begin{align*}
    \left|q_{2\ell -1}(x-y))\sum_{k=0}^{N-1} p^{\text{rest}}_k(y) a_{n-k}^{(\ell+1)}\right| 
    \le C
    \frac{x^{2\ell-1}x^{r+1}\log^{\delta_{r-1}}n}{n^{\frac{r + 2\ell + 2}{2}}}.
\end{align*}
Noting that
\begin{align*}
    \frac{x^{r+2\ell}}{n^{\frac{r + 2\ell + 2}{2}}}
    \le \frac{x^{r} }{n^{\frac{r+1}{2}}}
    \quad\text{for}\quad x\le\sqrt{n},
\end{align*}
we conclude that 
\begin{align*}
\left|\sum_{k=1}^{N-1}p_k^{\text{rest}}(y)b_{n-k}^{\text{reg}}(x-y)\right|
\le C \frac{x^{r} \log^{\delta_{r-1}}n}{n^{\frac{r+1}{2}}}
\end{align*}
Combining this with \eqref{eq:p-b^rest}, we arrive at the estimate for $x\le \sqrt{n}$
\begin{align}\label{eq:sumestim1}
\left|\sum_{k=1}^{N-1} p_k(y) b_{n-k}(x-y) 
-\sum_{k=1}^{N-1} p^{\text{reg}}_k(y) b^{\text{reg}}_{n-k}(x-y) \right|
\le C\frac{x^{r}(\log n)^{\lf \frac{r}{2}\rf}}{n^{\frac{r+1}{2}}},
\end{align}
where in the logarithm's power we have used
\begin{align} \label{eq:logpower}
    \left\lf \frac{r}{2} \right\rf =
    \left\lf \frac{r-1}{2}\right\rf + \delta_{r-1}
    \ge 
    \max\left(\left\lf \frac{r-1}{2}\right\rf, \delta_{r-1}\right), \quad \quad r\ge 1.
\end{align}

To estimate the second sum on the right hand side of \eqref{eq:seqsplit}
we first decompose $p_k(y)$:
\begin{align*}
    \sum_{k=N}^{n-1} p_k(y) b_{n-k}(x-y) = 
    \sum_{k=N}^{n-1} p^{\text{reg}}_k(y) b_{n-k}(x-y) 
    +\sum_{k=N}^{n-1} p^{\text{rest}}_k(y) b_{n-k}(x-y).
\end{align*}
Recall that $\sum_{k=1}^\infty b_k(x-y)=q_0(x-y)$. Using \eqref{eq:p_n^rest} and \eqref{eq:qpower}, we obtain
\begin{align*}
\left|\sum_{k=N}^{n-1} p^{\text{rest}}_k(y) b_{n-k}(x-y)\right| 
&\le C\frac{y^{r+1}}{n^{\frac{r+2}{2}}} \sum_{k=1}^{n-N} b_k(x-y) \\
&\le C\frac{y^{r+1}}{n^{\frac{r+2}{2}}} q_0(x-y)
\le C\frac{x^{r+1}}{n^{\frac{r+2}{2}}}
\end{align*}
uniformly in $y<x$.
This implies that
\begin{align}
\label{eq:p^rest-b}
\sum_{k=N}^{n-1} p^{\text{rest}}_k(y) b_{n-k}(x-y)
\le C\frac{x^{r}}{n^{\frac{r+1}{2}}},
\quad x\le\sqrt{n}.
\end{align}

Recall that $b_0(x)=0$. 
Decomposing $b_{n-k}$ and using the definition of $p^{\text{reg}}_k(y)$, 
we have
\begin{align*}
&\sum_{k=N}^{n-1} p^{\text{reg}}_k(y)b_{n-k}(x-y) 
=\sum_{k=N}^{n} p^{\text{reg}}_k(y)b_{n-k}(x-y)\\
&=\sum_{k=N}^{n} p^{\text{reg}}_k(y) b^{\text{reg}}_{n-k}(x-y)
+
\sum_{j=-1}^{r-1}\psi_j(y)\sum_{k=N}^{n} b^{\text{rest}}_{n-k}(x-y) [s^{k}](1-s)^{j/2}\\
&
=\sum_{k=N}^{n} p^{\text{reg}}_k(y) b^{\text{reg}}_{n-k}(x-y)
+\sum_{j=-1}^{r-1}\psi_j(y)\sum_{k=0}^{n-N} b^{\text{rest}}_{k}(x-y) [s^{n-k}](1-s)^{j/2}.
\end{align*}
If $n>r+2$ then $N>(r-1)/2$ and, consequently, $[s^{k}](1-s)^{\ell}=0$ for all $k\ge N$
and all $\ell\le(r-1)/2$. Therefore, 
$$
\sum_{j=-1}^{r-1}\psi_j(y)\sum_{k=0}^{n-N} b^{\text{rest}}_{k}(x-y) [s^{n-k}](1-s)^{j/2}
=\sum_{\ell=0}^{\lf\frac{r}{2}\rf}\psi_{2\ell-1}(y)
\sum_{k=0}^{n-N} b^{\text{rest}}_{k}(x-y)a_{n-k}^{(l+1)}.
$$
Recall that, according to the induction assumption,
$b_{n}^{\text{rest}}(x-y)\le C(x-y)^r\frac{\log^{\lf \frac{r-1}{2}\rf}n}{n^{(r+2)/2}}$ and $\sum_{n=0}^\infty b_n^{\text{rest}}(x)s^n \in \cH_{r-1}^0$.
Then, applying again Lemma \ref{lem:partial} and recalling that 
$|\psi_{2\ell-1}(y)|\le Cy^{2\ell}$, one gets, uniformly in $y<x$, 
\begin{align*}
\left|\psi_{2\ell-1}(y)\sum_{k=0}^{n-N} b^{\text{rest}}_k(x-y)a_{n-k}^{(l+1)}\right|
&\le Cy^{2\ell }(x-y)^r\frac{\log^{\lf \frac{r-1}{2}\rf + \delta_{r-1}}n}{n^{(r+2\ell+1)/2}}
\\ &\le Cx^{2\ell+r }\frac{\log^{\lf \frac{r}{2} \rf}n}{n^{(r+2\ell+1)/2}}.
\end{align*}
Here we have used $\eqref{eq:logpower}$.
Consequently, for $x\le\sqrt{n}$,
$$
\left|\sum_{j=-1}^{r-1}\psi_j(y)
\sum_{k=0}^{n-N} b^{\text{rest}}_k(x-y) [s^{k}](1-s)^{j/2}\right|
\le Cx^{r }\frac{\log^{\lf \frac{r}{2} \rf}n}{n^{(r+1)/2}}.
$$
Combining this with \eqref{eq:p^rest-b}, we conclude that 
\begin{equation}
\label{eq:sumestim2}
\left|\sum_{k=N}^{n-1} p_k(y) b_{n-k}(x-y)-
\sum_{k=N}^{n} p^{\text{reg}}_k(y) b^{\text{reg}}_{n-k}(x-y)\right|
\le Cx^{r }\frac{\log^{\lf \frac{r}{2} \rf}n}{n^{(r+1)/2}},
\end{equation}
for all $x\le\sqrt{n}$. Combining now \eqref{eq:sumestim1} and \eqref{eq:sumestim2},
we obtain, for $x\le\sqrt{n}$,
\begin{align*}
\left|\sum_{k=1}^{n-1}p_k(y) b_{n-k}(x-y)-
\sum_{k=1}^{n} p^{\text{reg}}_k(y) b^{\text{reg}}_{n-k}(x-y)\right|
\le Cx^{r }\frac{\log^{\lf \frac{r}{2} \rf}n}{n^{(r+1)/2}}
\end{align*}
Summing these estimates over $y$ and using relation \eqref{eq:b_n-new} together with the bound \eqref{eq:p_n^rest} (with parameter $r-1$) 
gives 
\begin{align*}
\left|nb_n(x) -p^{\text{reg}}_{n}(x) - 
\sum_{y=1}^{x-1}\sum_{k=1}^{n} p^{\text{reg}}_k(y) b^{\text{reg}}_{n-k}(x-y)\right|
\le Cx^{r+1}\frac{\log^{\lf \frac{r}{2} \rf}n}{n^{(r+1)/2}}.
\end{align*}
So by definition of $b_n^{\text{rest}}$:
\begin{align*}
    |b_n^{\text{rest}}| \le \frac{1}{n}\left|nb_n^{\text{reg}}(x) -p^{\text{reg}}_{n}(x) - 
\sum_{y=1}^{x-1}\sum_{k=1}^{n} p^{\text{reg}}_k(y) b^{\text{reg}}_{n-k}(x-y)\right|
+ Cx^{r+1}\frac{\log^{\lf \frac{r}{2} \rf}n}{n^{(r+3)/2}}.
\end{align*}
Combining the last inequality with the Lemma \ref{lem:bnregrec}
we get the desired estimate for $x\le\sqrt{n}$.
Thus, the proof is complete.
\end{proof}

\subsection{Asymptotic expansions for $\pr(S_n=x,\overline{\tau}_0>n)$.}
The probabilities $\pr(S_n=x,\overline{\tau}_0>n)$ can be considered in the same way. In this short paragraph we just indicate differences caused by the change of conditioning.

Define 
$$
\overline{b}_n(x):=\pr(S_n=x,\overline{\tau}_0>n),\quad x\ge0.
$$
Instead of \eqref{eq:brecurrent} one has the following recurrent equation
\begin{equation}
\label{eq:brecurrent-a}
\overline{b}_n(x)=\pr(S_n=x)
+\sum_{k=1}^{n-1}\sum_{y=0}^x\pr(S_k=y)\overline{b}_{n-k}(x-y).
\end{equation}
This equality can be proved by the same method, which was used in the proof of Lemma 17 in \cite{VW09}.

Repeating the arguments from Subsection~\ref{subs:expansion}, one gets easily the representation 
\begin{align}
\label{eq:B-bar}
\overline{B}(x,s):=\sum_{n=1}^\infty s^n\overline{b}_n(x)
=\sum_{i=0}^m\overline{q}_i(x)(1-s)^{i/2}+H_{\overline{B}}(x,s),
\end{align}
where the functions $\overline{q}_i(x)$ satisfy the recurrent equations 
\begin{equation}
\label{eq:qrecurrent-a}
-\frac{\ell}{2}\overline{q}_\ell(x)=\psi_{\ell-2}
+\sum_{y=0}^x\sum_{j=-1}^{\ell-2}\psi_j(y)\overline{q}_{\ell-2-j}(x-y),
\quad \ell\ge2
\end{equation}
with starting conditions
$$
\overline{q}_0(x)=\sum_{n=0}^\infty\pr(S_n=x,\overline{\tau}_0>n),
$$
$$
\overline{q}_1(x)=-\sqrt{\frac{2}{\pi}}\sum_{n=0}^\infty\pr(S_n\le x,\overline{\tau}_0>n).
$$
All other arguments from the proofs for $\pr(S_n=x,\tau_0>n)$ can be used without any adaption.
\section{Asymptotic expansions for $\tau_x$.}
\label{sec:tau_x}
\subsection{Left-continuous case.}
For any left-continuous random walk and any $x>0$ one has the equality 
$$
\pr(\tau_x=n)=\frac{x}{n}\pr(S_n=-x),\quad n\ge1.
$$
Since $\E|X_1|^{r+3}$ is assumed finite, we infer from Proposition~\ref{prop:asymplattice} that
$$
\pr(S_n=-x)
=\sum_{j=0}^{\lfloor\frac{r}{2}\rfloor}\theta_j(-x)a_{n-1}^{(j+1)}
+O\left(\frac{1+x^{r+1}}{n^{(r+2)/2}}\right)
$$
uniformly in $x\ge0$. Consequently, 
$$
\pr(\tau_x=n)
=\sum_{j=0}^{\lfloor\frac{r}{2}\rfloor}x\theta_j(-x)\frac{a_{n-1}^{(j+1)}}{n}
+
O\left(\frac{1+x^{r+2}}{n^{(r+4)/2}}\right).
$$
Using the fact that $\frac{a_{n-1}^{(j+1)}}{n}=-a_{n}^{(j+2)}$ for all $n\ge1$, we arrive at the equality
$$
\pr(\tau_x=n)
=\sum_{j=0}^{\lfloor\frac{r}{2}\rfloor}(-x)\theta_j(-x)a_{n}^{(j+2)}
+
O\left(\frac{1+x^{r+2}}{n^{(r+4)/2}}\right).
$$
Summing $\pr(\tau_x=k)$ from $n+1$ to $\infty$, we obtain 
$$
\pr(\tau_x > n)
=\sum_{j=0}^{\lfloor\frac{r}{2}\rfloor}(-x)\theta_j(-x)
\sum_{k=n+1}^\infty a_k^{(j+2)}
+
O\left(\frac{1+x^{r+2}}{n^{(r+2)/2}}\right).
$$
Using now the equality
$$
\sum_{k=n+1}^\infty a_k^{(j+2)}
=\sum_{k=n+1}^\infty \left(a_k^{(j+1)}-a_{k-1}^{(j+1)}\right)=-a_n^{(j+1)},
$$
we finally get 
$$
\pr(\tau_x>   n)
=\sum_{j=0}^{\lfloor\frac{r}{2}\rfloor}x\theta_j(-x)a_n^{(j+1)}
+O\left(\frac{1+x^{r+2}}{n^{(r+2)/2}}\right).
$$
This is the desired equality with $V_j(x)=x\theta_{j-1}(-x)$. It was shown in 
Proposition~\ref{prop:asymplattice} that $\theta_k$ is a polynomial of degree $2k$.
Consequently, $V_j$ has degree $2j-1$. This completes the proof.
\subsection{General case}
Let $m_n$ be the running minimum of the walk $S_n$, that is,
$$
m_n:=\min_{0\le k\le n}S_k.
$$
Define also 
$$
\theta_n:=\max\{k\le n: S_k=m_n\}.
$$
The we have 
$$
\pr(\tau_x>n)=\pr(m_n>-x)
=\sum_{k=0}^n\pr(m_n>-x,\theta_n=k),\quad n\ge 1.
$$
By the duality lemma for random walks,
\begin{align*}
&\pr(m_n>-x,\theta_n=k)\\
&\hspace{1cm}
=\pr(S_j\ge S_k\text{ for all }j\le k,S_k>-x,S_l>S_k\text{ for all }l>k)\\
&\hspace{1cm}=\pr(S_j\ge S_k\text{ for all }j\le k,S_k>-x)\\
&\hspace{3cm}
\pr(X_{k+1}>0,X_{k+1}+X_{k+2}>0,\ldots,X_{k+1}+\ldots+X_n>0)\\
&\hspace{1cm}=
\pr(-S_k<x, -S_k\ge-S_j\text{ for all }j\le k)\pr(\tau_0>n-k)\\
&\hspace{1cm}=\pr(\widetilde{S_k}<x,\widetilde{\tau}_0>k)\pr(\tau_0>n-k),
\end{align*}
where $\widetilde{S}_n=-S_n$, $n\ge0$ and 
$\widetilde{\tau}_0=\inf\{n\ge1:\widetilde{S_n}<0\}$.
Therefore,
$$
\pr(\tau_x>n)
=\sum_{k=0}^n\pr(\widetilde{S_k}<x,\widetilde{\tau}_0>k)\pr(\tau_0>n-k),
\quad n\ge1.
$$
In terms of the generating functions we then have 
\begin{equation}
\label{eq:tau_x-tau_0}
\sum_{n=0}^\infty s^n\pr(\tau_x>n)
=\left(1+\sum_{y=0}^{x-1}\widetilde{B}(s,y)\right)
\sum_{n=0}^\infty s^n\pr(\tau_0>n),
\end{equation}
where 
$$
\widetilde{B}(s,y)
=\sum_{n=1}^\infty s^n\pr(\widetilde{S_n}=y,\widetilde{\tau}_0>n).
$$

Applying \eqref{eq:B-bar} to the random walk $\widetilde{S}_n$, we get
$$
\widetilde{B}(s,y)=\sum_{j=0}^{r}\widetilde{q}_j(y)(1-s)^{j/2}+\widetilde{H}_y(s).
$$
Therefore, 
\begin{align}
\label{eq:sum_B}
\sum_{y=0}^{x-1}\widetilde{B}(s,y)
=\sum_{j=0}^r\widetilde{Q}_j(x)(1-s)^{j/2}+\widetilde{H}(x,s),
\end{align}
where
$$
\widetilde{Q}_j(x):=\sum_{y=0}^{x-1}\widetilde{q}_j(y)
$$
and 
$$
\widetilde{H}(x,s):=\sum_{y=0}^{x-1}\widetilde{H}_y(s).
$$
In Lemma~\ref{lem:b-bound} we have shown that
$$
|[s^n]H_y(s)|\le C(1+y^{r+1})\frac{\log^{\lf\frac{r}{2}\rf} n}{n^{(r+3)/2}}, n\ge 2.
$$
Consequently,
\begin{equation*}
|[s^n]\widetilde{H}(x,s)|\le C(1+x^{r+2})\frac{\log^{\lf\frac{r}{2}\rf} n}{n^{(r+3)/2}}, n\ge 2
\end{equation*}
and
\begin{equation}
\label{eq:H_x_s}
\left|\widetilde{H}(x,s)\right|_{\cH_{r}^{\lf\frac{r}{2}\rf}}\le C(1+x^{r+2}).
\end{equation}
Furthermore, according to \eqref{eq:g.f.tau0},
$$
\sum_{n=0}^\infty s^n\pr(\tau_0>n)=\sum_{k=-1}^{r-1}\mu_{k+1}(1-s)^{k/2}
+H_0(s),\quad H_0(s) \in \cH^1_{r-1}.
$$
Plugging this equality and \eqref{eq:sum_B} into \eqref{eq:tau_x-tau_0}, we obtain
\begin{equation}
\label{eq:tau_x-repr}
\sum_{n=0}^\infty s^n\pr(\tau_x>n)
=\sum_{j=-1}^{r-1} Q_j(x)(1-s)^{j/2}+H(x,s),
\end{equation}
where
\begin{align}
\label{eq:Q-q}
Q_j(x)=\mu_{j+1}+\sum_{k=-1}^j\mu_{k+1}\widetilde{Q}_{j-k}(x),\quad
j=1,2,\ldots,r-1
\end{align}
and 
\begin{align*}
H(x,s)&
=H_0(s)\left(1+\sum_{j=0}^{r}\widetilde{Q}_j(x)(1-s)^{j/2}+\widetilde{H}(x,s)\right)\\
&\hspace{1cm}+\widetilde{H}(x,s)\sum_{k=-1}^{r-1}\mu_{k+1}(1-s)^{k/2}\\
&\hspace{1cm}+\sum_{j=r}^{2r-1}\sum_{k=j-r}^j\mu_{k+1}\widetilde{Q}_{j-k}(x)(1-s)^{j/2}.
\end{align*}
It follows from Lemma~\ref{1-slemma} that 
$$
\widetilde{H}_1(x,s):=\sum_{j=r}^{2r-1}\sum_{k=j-r}^j\mu_{k+1}\widetilde{Q}_{j-k}(x)(1-s)^{j/2}
\in\cH_{r-1}^0.
$$
Since $\widetilde{q}_j(x)\le C(1+x^j)$, $\widetilde{Q}_j(x)\le C(1+x^{j+1})$. This implies that
\begin{align}
\label{eq:H1-norm}
\left|\widetilde{H}_1(x,s)\right|_{\cH_{r-1}^0}
\le C(1+x^{r+1}).
\end{align}
Applying Lemmata~\ref{mlemma} and \ref{lem:hlemma_2}, we infer that 
$$
\widetilde{H}_2(x,s):=
H_0(s)\left(1+\sum_{j=0}^{r}\widetilde{Q}_j(x)(1-s)^{j/2}+\widetilde{H}(x,s)\right)
\in\cH_{r-1}^0
$$
and, due to \eqref{eq:H_x_s},
\begin{align}
\label{eq:H2-norm}
\left|\widetilde{H}_2(x,s)\right|_{\cH_{r-1}^0}
\le C(1+x^{r+2}).
\end{align}
Using Lemma \ref{lem:hlemma_2} once again, we conclude that
$$
\widetilde{H}_3(x,s):=
\widetilde{H}(x,s)\sum_{k=0}^{r-1}\mu_{k+1}(1-s)^{k/2}
\in\cH_{r}^{\lf\frac{r}{2}\rf}
$$
and
\begin{align}
\label{eq:H3-norm}
\left|\widetilde{H}_3(x,s)\right|_{\cH_{r}^{\lf\frac{r}{2}\rf}}
\le C(1+x^{r+2}).
\end{align}
Finally, according to Lemma~\ref{lem:hlemma_3} and to \eqref{eq:H_x_s},
$$
\widetilde{H}_4(x,s):=\mu_0\widetilde{H}(x,s)(1-s)^{-1/2}\in\cH_{r-1}^{\lf\frac{r}{2}\rf+\delta_r}
$$
and 
\begin{align}
\label{eq:H4-norm}
\left|\widetilde{H}_4(x,s)\right|_{\cH_{r-1}^{\lf\frac{r}{2}\rf+\delta_r}}
\le C(1+x^{r+2})
\end{align}
Combining \eqref{eq:H1-norm} --- \eqref{eq:H4-norm}, we conclude that $H(x,s)$
belongs to $\cH_{r-1}^{\lf\frac{r}{2}\rf+\delta_r}$ and that the corresponding norm is bounded by $C(1+x^{r+2})$. Thus, applying now Lemma~\ref{lem:rdecomp} to the representation \eqref{eq:tau_x-repr} and noting that $\lf \frac{r+1}{2}\rf = \lf \frac{r}{2} \rf + \delta_r$, we finish the proof.
\section{Polyharmonicity of coefficients}
\label{sec:poly}
\subsection{Proof of the Theorem~\ref{thm:polyharm}}
Applying the total probability law, we obtain
\begin{align} \label{eq:taumarcovpr}
    \pr (\tau_x > n+1) = \sum_{y=1}^{\infty} \pr(X_1 = y-x) \pr(\tau_y > n).
\end{align}
The assumption $\E |X_1|^r  < \infty $ implies that 
\begin{align*}
    \pr(X_1 > \sqrt{n}) = o(n^{-r/2}).
\end{align*}
Then, for every fixed $x$,
\begin{align} \label{eq:taumarcovtail}
\nonumber
\sum_{y \ge \sqrt{n}}\pr(X_1 = y-x) \pr(\tau_y > n) 
&\le \sum_{y = \lfloor \sqrt{n} \rfloor + 1}^{\infty}
\pr(X_1 = y-x) \\
&= \pr(X_1 + x \ge\sqrt{n}) = o(n^{-r/2}).
\end{align}
We next apply the asymptotic expansion from Theorem \ref{thm:tau_x} 
to $\pr(\tau_x > n+1)$  and  to $\pr(\tau_y > n)$ with $y\le\sqrt{n}$.
That expansion can be written in the following way:
\begin{equation*}
\pr(\tau_y > n) 
= \sum_{j=1}^{\lf\frac{r}{2}\rf+1}V_j(y) a_{n}^{(j)} + R_{y,n},
\end{equation*}
where $R_{y,n}$ is the remainder term.
Plugging this and \eqref{eq:taumarcovtail} into \eqref{eq:taumarcovpr},
we obtain
\begin{align*}
&\sum_{j=1}^{\lf\frac{r}{2}\rf+1}V_j(x) a_{n+1}^{(j)} + R_{x,n+1}\\
&\hspace{1cm}= 
\sum_{y=1}^{\lf \sqrt{n} \rf} \pr(X_1 = y-x)
\left( \sum_{j=1}^{\lf\frac{r}{2}\rf+1}V_j(y) a_{n}^{(j)} + R_{y,n}
\right) + o(n^{-r/2}).
\end{align*}
Changing the order of summation leads then to the equality
\begin{align*}
    \sum_{j=1}^{\lf\frac{r}{2}\rf+1}V_j(x) a_{n+1}^{(j)} = 
\sum_{j=1}^{\lf\frac{r}{2}\rf+1}a_{n}^{(j)}\sum_{y=1}^{\lf \sqrt{n} \rf} V_j(y)\pr(X_1 = y-x)
+ R_n + o(n^{-r/2}).
\end{align*}
where
\begin{align*}
    R_n = -R_{x,n+1} + \sum_{y=1}^{\lf \sqrt{n} \rf}\pr(X_1 = y-x) R_{y,n}.
\end{align*}
According to Theorem \ref{thm:tau_x}, 
$$
R_{y,n}=O\left(\log^{r/2-2}n\frac{1+|y|^{r-2}}{n^{(r-2)/2}}\right)
$$
Then there exists a constant $C$ such that
\begin{align*}
    \frac{1}{C}|R_n| &\le \log^{r/2-2}n\frac{(1+|x|)^{r-2}}{n^{(r-2)/2}} + \sum_{y=1}^{\lf \sqrt{n} \rf} \pr(X_1 = y-x) \log^{r/2-2}n\frac{(1+|y|)^{r-2}}{n^{(r-2)/2}} \\
    &\le \frac{\log^{r/2-2}n}{n^{(r-2)/2}} 
    \left(
    (1+|x|)^{r-2} 
    +
    \sum_{y=1}^{\lf \sqrt{n} \rf} \pr(X_1 = y-x) (1+|y|)^{r-2}
    \right).
\end{align*}
Since $\E X_1^{r-2}$ is finite, we conclude that
\begin{equation*}
    |R_n| = O\left(\frac{\log^{r/2-2}n}{n^{(r-2)/2}} \right).
\end{equation*}
And hence we obtain
\begin{align} \label{eq:taumarkov2}
\sum_{j=1}^{\lf\frac{r}{2}\rf+1}V_j(x) a_{n+1}^{(j)} = 
\sum_{j=1}^{\lf\frac{r}{2}\rf+1}a_{n}^{(j)}\sum_{y=1}^{\lf \sqrt{n} \rf} V_j(y)\pr(X_1 = y-x)
+ O\left(\frac{\log^{r/2-2}n}{n^{(r-2)/2}} \right).
\end{align}

We next notice that
$$
\sum_{y=1}^{\lf \sqrt{n} \rf} V_j(y)\pr(X_1 = y-x)
=\E[V_j(x+X);x+X>0]-\E[V_j(x+X);x+X>\sqrt{n}].
$$
Recalling that $V_j(x)\le C_j (1+x)^{2j-1}$ and using the Markov inequality, one gets 
$$
\E[V_j(x+X);x+X>\sqrt{n}]=o(n^{j-1/2-r/2})
$$
for every $j\le (r+1)/2$.
Recalling that $a_n^{(j)} \sim n^{-j+1/2}$, we finally obtain
\begin{align*}
    a_n^{(j)}\E[V_j(x+X);x+X>\sqrt{n}]= o\left({n^{-r/2}}\right).
\end{align*}
Applying this to \eqref{eq:taumarkov2} we have
\begin{align*}
\sum_{j=1}^{\lf\frac{r}{2}\rf+1}V_j(x) a_{n+1}^{(j)} = 
\sum_{j=1}^{\lf\frac{r}{2}\rf+1}a_{n}^{(j)}
\E [ V_j(x+X_1), x+X_1 >0 ]
+ O\left(\frac{\log^{r/2-2}n}{n^{(r-2)/2}} \right).
\end{align*}
Let us remind relation (\textit{i}) form Lemma \ref{lem:an} for the elements $a_n^{j}$:
\begin{align*}
    a_n^{(j)} = a_{n+1}^{(j)} - a_{n+1}^{(j+1)}
\end{align*}
Then one has
\begin{align}
\label{eq:polyharm.rel}
\nonumber
\sum_{j=1}^{\lf\frac{r}{2}\rf+1}V_j(x) a_{n+1}^{(j)} = 
\sum_{j=1}^{\lf\frac{r}{2}\rf+1} &(a_{n+1}^{(j)} - a_{n+1}^{(j+1)})
\E [V_j(x+X_1), x+X_1 > 0]\\
&+ O\left(\frac{\log^{r/2-2}n}{n^{(r-2)/2}} \right).
\end{align}
Dividing both sides by $a_{n+1}^{(1)}$ and letting $n\to\infty$, we conclude that
$V_1(x)=\E[V_1(x+X);x+X>0]$. In other words, $V_1$ is a harmonic function. Using this
equality we can rewrite \eqref{eq:polyharm.rel}:
\begin{align*}
\sum_{j=2}^{\lf\frac{r}{2}\rf+1}V_j(x) a_{n+1}^{(j)} = 
-a_{n+1}^{(2)}V_1(x)+
\sum_{j=2}^{\lf\frac{r}{2}\rf+1} &(a_{n+1}^{(j)} - a_{n+1}^{(j+1)})
\E \big( V_j(x+X_1), x+X_1 \ge 0 \big)\\
&+ O\left(\frac{\log^{r/2-2}n}{n^{(r-2)/2}} \right).
\end{align*}
Dividing this time both sides by $a_{n+1}^{(2)}$ and letting again $n\to\infty$, we get 
$$
V_2(x)=-V_1(x)+\E[V_2(x+X);x+X>0].
$$
Using the definition of the operator $P$, we can rewrite this equality in the following way:
$$
PV_2(x)-V_2(x)=V_1(x).
$$
Recalling that $V_1$ is harmonic, we also have the equality 
$$
(P-I)^2V_2=0.
$$
Thus, $V_2$ is polyharmonic of order 2. Repeating  this argument $k$ times we infer that $V_k$ is polyharmonic of order $k$.

The functions $U_k$ can be considered in the same manner. 
Instead of \eqref{eq:taumarcovpr} one can use the following obvious equality
\begin{align*}
\pr(S_{n+1}=x,\tau_0>n+1)
&=\sum_{y=1}^\infty\pr(S_n=y,\tau_0>n)\pr(y+X_{n+1}=x)\\
&=\sum_{y=1}^\infty\pr(x-X=y)\pr(S_n=y,\tau_0>n).
\end{align*}
Applying now the asymptotic expansions from Theorem~\ref{thm:cond_prob}
and repeating the arguments from the proof of polyharmonicity of functions $V_k$,
one gets easily the desired property of functions $U_k$.

\subsection{Proof of Theorem~\ref{thm:V-asymp}}
We start by considering functions $\psi_j(y)$. Recall that they are defined by the 
decomposition
\begin{align}
\label{eq:psi-repr}
\sum_{n=1}^\infty s^{n-1}\pr(S_n=y)=\sum_{j=-1}^{{m}}\psi_j(x)(1-s)^{j/2}+H_\Psi(x,s), \quad H_\Psi\in\cH_{{m}}^0.
\end{align}
\begin{lemma}\label{lem:psipoly}
For every $j\le m/2$ there exists a polynomial $P^\psi_j(x)$ of degree {$2j+1$} such that
\begin{align*}
    \psi_{2j}(x) = P_j^\psi(x) + O(e^{-\varepsilon x}).
\end{align*}
\end{lemma}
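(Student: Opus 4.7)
The plan is to analyze the generating function defining $\psi_{2j}(x)$ via Fourier inversion and contour deformation, using the exponential moment assumption crucially. Recall that in the decomposition
$$\Psi(x,s)=\sum_{n\ge 1}s^{n-1}p_n(x)=\sum_{k=-1}^{m}\psi_k(x)(1-s)^{k/2}+H_\Psi(x,s),\qquad H_\Psi(x,\cdot)\in\cH_m^0,$$
the coefficient $\psi_{2j}(x)$ equals, up to a sign, the $j$-th Taylor coefficient at $s=1$ of the regular part obtained after subtracting all half-integer powers of $(1-s)$. So it suffices to analytically identify the Taylor expansion at $s=1$ of the regular part and read off its coefficients as functions of $x$.

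By Fourier inversion in the lattice case,
$$\Psi(x,s)=\frac{1}{2\pi}\int_{-\pi}^{\pi}\frac{e^{-itx}\phi(t)}{1-s\phi(t)}\,dt,$$
where $\phi(t)=\E e^{itX}$. Under $\E e^{\lambda|X|}<\infty$, $\phi$ extends holomorphically to the strip $|\Im t|<\lambda$, and the implicit function theorem yields, for $s$ in a real neighborhood of $1$, exactly two zeros of $1-s\phi(t)$ close to the origin, located at $t_\pm(s)=\pm i\varrho(\sqrt{1-s})$ with $\varrho$ analytic, $\varrho(0)=0$, $\varrho'(0)=\sqrt{2}/\sigma$. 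Assume $x>0$ and shift the contour from the real axis down to $\Im t=-\delta$ for a sufficiently small $\delta\in(0,\lambda)$; this captures the single pole at $t_-(s)$ and yields
$$\Psi(x,s)=\frac{A(\sqrt{1-s})}{\sqrt{1-s}}\,e^{-\varrho(\sqrt{1-s})\,x}+E(x,s),$$
with $A$ analytic near $0$ and $A(0)\ne 0$ (the $1/\sqrt{1-s}$ factor coming from $\phi'(t_-(s))\sim iu\sqrt{2}\sigma$). The residual integral $E(x,s)$ satisfies $|E(x,s)|\le Ce^{-\delta x}$ uniformly in $s$ throughout a complex neighborhood of $1$, since $|e^{-itx}|=e^{-\delta x}$ on the shifted contour.

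Expanding in $u=\sqrt{1-s}$,
$$\frac{A(u)}{u}\,e^{-\varrho(u)x}=\sum_{k\ge-1}P_k(x)u^k,$$
one checks that each $P_k(x)$ is a polynomial in $x$ of degree $k+1$: the $1/u$ prefactor shifts all powers of $u$ by one, and expansion of $e^{-ux\sqrt 2/\sigma+O(u^2 x)}$ introduces at most one power of $x$ per power of $u$. Uniqueness of the $\cR$-decomposition in Lemma~\ref{lem:rdecomp}, applied to the identity above, gives $\psi_k(x)=P_k(x)+E_k(x)$ where $E_k(x)$ is the corresponding coefficient coming from $E(x,s)$. By Cauchy's estimate on a small circle $|1-s|=\eta$, $E_k(x)=O(e^{-\varepsilon x})$ for any $\varepsilon<\delta$. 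Specializing to $k=2j$ gives $\psi_{2j}(x)=P_j^\psi(x)+O(e^{-\varepsilon x})$ with $\deg P_j^\psi=2j+1$, as claimed.

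The principal technical obstacle is ensuring that the contour shift crosses only the pole at $t_-(s)$ and no other zero of $1-s\phi(t)$ in the strip $-\delta\le\Im t\le 0$, uniformly for $s$ near $1$. This relies on the strict bound $|\phi(t)|<1$ for real $t\in[-\pi,\pi]\setminus\{0\}$ (which is guaranteed by the maximal span $1$ lattice hypothesis), extended by continuity and analyticity to a thin complex strip; endpoints $t=\pm\pi$ cause no trouble because $\phi$ is $2\pi$-periodic in the lattice case, so the contour may be regarded as a closed loop on $\R/2\pi\Z$. A secondary but routine task is the explicit coefficient computation verifying $\deg P_j^\psi=2j+1$ from the leading structure of $A(u)e^{-\varrho(u)x}/u$.
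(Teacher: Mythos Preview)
Your argument is correct and takes a genuinely different route from the paper's proof.

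The paper applies the discrete difference operator $\Delta^{2j+2}$ directly to $\psi_{2j}(x)$. Since the odd-index coefficients $\psi_{2i-1}=\theta_i$ are polynomials of degree $\le 2j$, they are annihilated, and what remains is expressed as a single Fourier integral
\[
\Delta^{2j+2}\psi_{2j}(x)=\frac{(-1)^j}{2\pi}\int_{-\pi}^{\pi}e^{-itx}\,\frac{g_{2j+2}(t)f^{j+1}(t)}{(1-f(t))^{j+1}}\,dt,
\]
where $g_{2j+2}(t)=(e^{-it}-1)^{2j+2}$ has a zero of order $2j+2$ at $t=0$, exactly cancelling the pole of $(1-f(t))^{-(j+1)}$. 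The integrand is then analytic on a strip, the contour is shifted to $\Im t=-\varepsilon$, and one reads off $\Delta^{2j+2}\psi_{2j}(x)=O(e^{-\varepsilon x})$, which is equivalent to the lemma.

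You instead keep $s$ as a parameter in the Fourier representation of $\Psi(x,s)$, shift the contour once, and extract the residue at the $s$-dependent pole $t_-(s)=-i\varrho(\sqrt{1-s})$. This gives the explicit structural decomposition $\Psi(x,s)=u^{-1}A(u)e^{-\varrho(u)x}+E(x,s)$ with $u=\sqrt{1-s}$, from which the polynomials $P_k(x)$ of degree $k+1$ are read off as Laurent coefficients in $u$, while the shifted integral $E(x,s)$ is analytic in $s$ near $1$ and uniformly $O(e^{-\delta x})$. The identification $\psi_{2j}=P_{2j}+E_j$ then follows by matching with the $\cR$-decomposition. Your approach is more constructive: it yields closed formulas for the approximating polynomials and in fact shows simultaneously that $\psi_{2j-1}$ is exactly a polynomial. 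The paper's approach is shorter and avoids the residue algebra and the (slightly delicate) matching argument, at the cost of giving no information beyond the bound $\Delta^{2j+2}\psi_{2j}=O(e^{-\varepsilon x})$. Both proofs rest on the same mechanism---contour deformation in the Fourier integral, enabled by the exponential moment assumption.
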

\begin{proof}
Let $\Delta$ denote the standard difference operator, that is, 
\begin{align*}
    \Delta G (x) =  G (x+1) - G(x)
\end{align*}
for every function $G$. It is clear that if $P(x)$ is a polynomial of degree $m$ then 
$\Delta^{m+1}P(x)\equiv 0$.
Thus, the lemma will be proven if we show that
\begin{align}
\label{eq:delta-bound}
    \Delta^{2j+2}\psi_{2j}(x) = O(e^{-\varepsilon x}). 
\end{align}
Differentiating \eqref{eq:psi-repr} $j$ times, we obtain
\begin{align*}
    \psi_{2j}(x) = \frac{(-1)^j}{j!}\sum_{n=j+1}^\infty 
    \frac{(n-1)!}{(n-1-j)!}\big(
    \pr( S_n = x) - \sum_{i=0}^{j}\psi_{2i-1}(x)a_{n-1}^{(j+1)}
    \big).
\end{align*}
We already know from Proposition \ref{prop:asymplattice} that $\psi_{2\ell-1}(x) = \theta_\ell(x)$ is a polynomial of degree $2\ell$. Therefore,
\begin{align*}
   \Delta^{2j+2}\psi_{2j}(x) 
   =\frac{(-1)^j}{j!}
   \sum_{n=j+1}^\infty 
   \frac{(n-1)!}{(n-1-j)!}
    \Delta^{2j+2}\pr( S_n = x).
\end{align*}
Let $f(t)$ be characteristic function of $X_1$. Then, due to the inversion formula,
\begin{align*}
    \pr(S_n = x) = \frac{1}{2\pi} \int_{-\pi}^{\pi} {e^{-itx}}f^n(t)dt.
\end{align*}
This implies that
\begin{align*}
    \frac{(n-1)!}{(n-1-j)!}\Delta^{2j+2}\pr(S_n = x) 
    &= \frac{1}{2\pi} \int_{-\pi}^{\pi} \Delta^{2j+2}{e^{-itx}} 
    \frac{(n-1)!}{(n-1-j)!}f^n(t)dt.
\end{align*}
Consequently,
\begin{align*}
    \Delta^{2j+2}\psi_{2j}(x) 
    &=\frac{(-1)^j}{j!} \sum_{n=j+1}^\infty \frac{1}{2\pi} \int_{-\pi}^{\pi} \Delta^{2j+2}{e^{-itx}} \frac{(n-1)!}{(n-1-j)!}f^n(t)dt\\
    &= \frac{(-1)^j}{j!}\frac{1}{2\pi} \int_{-\pi}^{\pi} \Delta^{2j+2}{e^{-itx}} \left( \sum_{n=j}^\infty \frac{n!}{(n-j)!}f^{n+1}(t) \right) dt.
\end{align*}
We next notice that 
\begin{align*}
\sum_{n=j}^\infty \frac{n!}{(n-j)!} \lambda^{n-j} 
= \frac{d^j}{d\lambda^j}\left( \sum_{n=0}^\infty \lambda^n \right)^{(j)}= \left(\frac{1}{1-\lambda}\right) 
= \frac{j!}{(1-\lambda)^{j+1}}.
\end{align*}
Using this equality, we finally get the following representation
\begin{align} \label{eq:deltaphi}
    \nonumber
    \Delta^{2j+2}\psi_{2j}(x)
    &=\frac{(-1)^j}{2\pi}\int_{-\pi}^{\pi}
    \Delta^{2j+2}e^{-itx}\frac{f^{j+1}(t)}{(1-f(t))^{j+1}} dt\\
    &= \frac{j!}{2\pi}\int_{-\pi}^{\pi}e^{-itx} \frac{g_{2j+2}(t)f^{j+1}(t)}{(1-f(t))^{j+1}} dt
\end{align}
where
\begin{align*}
    g_m(t) := e^{itx} \cdot \Delta^{m}{e^{-itx}} =   \sum_{y=0}^{m} (-1)^y\binom{m}{y}e^{-ity}.
\end{align*}
We next show that the function in the integral on the right hand side of \eqref{eq:deltaphi} is analytic in some neighborhood $\Omega$ of the set
\begin{align*}
    \Pi := \big\{t \big| \Re t \in [-\pi, \pi], \; \Im t \in [-\varepsilon, 0] \big\}
\end{align*}
for every sufficiently small $ \varepsilon > 0$. 
First, We notice that $f(t)$ is analytic in $\Omega$ for sufficiently small $\varepsilon$ due to the assumption $\E e^{\lambda|X|}<\infty$ for some $\lambda > 0$.
Second, we have
\begin{align*} 
    e^{-itx}\frac{g_{2j+2}(t)}{(1-f(t))^{j+1}} =
    e^{-itx}
    \cdot
    \frac{g_{2j+2}(t)}{t^{2j+2}} \cdot \left(\frac{t^{2}}{1-f(t)}\right)^{j + 1}
\end{align*}
Since $f(t)$ is a characteristic function of the discrete random variable with zero mean we have that $(1-f(t))/t^2$ is analytic and does not have any zeroes in $\Omega$ for sufficiently small $\varepsilon$. Therefore its inverse is also analytic in $\Omega$. 
As a result we have the desired analiticity. Consequently,
\begin{align*}
    \int_{\partial \Pi} e^{-itx}\frac{g_{2j+2}(t)f^{j+1}(t)}{(1-f(t))^{j+1}} dt = 0
\end{align*}
We can rewrite the last integral into 4 integrals over the sets:
\begin{align*}
    \Gamma_1 &= \big\{t\big|\Re t \in [-\pi,\pi], \; \Im t =0\big\},\\
    \Gamma_2 &= \big\{t\big|\Re t = \pi, \; \Im t \in [-\varepsilon,0]\big\},\\
    \Gamma_3 &= \big\{t\big|\Re t \in [-\pi,\pi], ; \Im t = -\varepsilon \big\},\\
    \Gamma_4 &= \big\{t\big|\Re t = -\pi, \; \Im t \in [-\varepsilon, 0]\big\}.
\end{align*}
Since the function in the integral is $2\pi$-periodic, the integrals over $\Gamma_2$ and $\Gamma_4$ cancel each other out. This implies that
\begin{align*}
\left| \int_{-\pi}^{\pi}e^{-itx}
\frac{g_{2j+2}(t)f^{j+1}(t)}{(1-f(t))^{j+1}} dt \right| 
    &= \left|\int_{-\pi}^\pi
    e^{-i(t-\varepsilon i)x}
    \frac{g_{2j+2}(t-\varepsilon i)f^{j+1}(t-\varepsilon i)}{(1-f(t-i\varepsilon))^{j+1}} dt\right|\\
    &=e^{-\varepsilon x}\int_{-\pi}^\pi
     e^{-itx}
     \left|
    \frac{g_{2j+2}(t-\varepsilon i)f^{j+1}(t-\varepsilon i)}{(1-f(t-i\varepsilon))^{j+1}}\right|dt\\
    &\le e^{-\varepsilon x} \int_{-\pi}^\pi
    \left|
    \frac{g_{2j+2}(t-\varepsilon i)f^{j+1}(t-\varepsilon i)}{(1-f(t-i\varepsilon))^{j+1}}
    \right|
    dt \le e^{-\varepsilon x}.
\end{align*}
Plugging this into \eqref{eq:deltaphi}, we conclude that \eqref{eq:delta-bound} holds. Thus, the proof of the lemma is complete.
\end{proof}

In the lemma we analyse the properties of polynomial approximation under the convolution.
\begin{lemma}\label{lem:explop}
Let $P$ and $Q$ be polynomials. Assume that functions $u, v$ satisfy the representations
\begin{align} \label{eq:lemexppol}
    u(x) = P(x) + O(e^{-\varepsilon x}), \quad v(x) = Q(x) + O(e^{-\varepsilon x}),
\end{align}
where $\varepsilon>0$.
Then there exist a polynomial $R$ of degree  $\deg P + \deg Q + 1$ and a natural number $r\ge1$ such that
\begin{align*}
    \sum_{y=1}^{x-1}u(y)v(x-y) = R(x) + O(x^re^{-\varepsilon x}).
\end{align*}
\end{lemma}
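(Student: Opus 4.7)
The plan is to split $u = P + u_r$ and $v = Q + v_r$ where $|u_r(y)|, |v_r(y)| \le C_0 e^{-\varepsilon y}$, and then expand the convolution into four pieces
\[
\sum_{y=1}^{x-1} u(y)v(x-y) = S_{PQ}(x) + S_{Pv_r}(x) + S_{u_rQ}(x) + S_{u_rv_r}(x).
\]
The main term $S_{PQ}(x) = \sum_{y=1}^{x-1} P(y)Q(x-y)$ is a polynomial in $x$ of degree $\deg P + \deg Q + 1$: expanding $P(y)Q(x-y)$ as a linear combination of monomials $y^a(x-y)^b$ with $a\le\deg P$ and $b\le\deg Q$, this follows from the closed form of $\sum_{y=1}^{x-1} y^a(x-y)^b$ (a Faulhaber-type identity obtained by expanding $(x-y)^b$ via the binomial theorem). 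The double-error term is immediate:
\[
|S_{u_rv_r}(x)| \le C_0^2 \sum_{y=1}^{x-1} e^{-\varepsilon y}e^{-\varepsilon(x-y)} = C_0^2(x-1)e^{-\varepsilon x} = O(xe^{-\varepsilon x}).
\]

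The heart of the argument is the cross term $S_{Pv_r}(x) = \sum_{y=1}^{x-1} P(y)v_r(x-y)$; the term $S_{u_rQ}$ is entirely symmetric. Substituting $k=x-y$ and expanding $P(x-k)=\sum_{j=0}^{\deg P} c_j(k)\,x^j$ with polynomial coefficients $c_j$ of degree $\le \deg P - j$, I would interchange the order of summation:
\[
S_{Pv_r}(x) = \sum_{j=0}^{\deg P} x^j \sum_{k=1}^{x-1} c_j(k)v_r(k).
\]
For each $j$ the series $\alpha_j:=\sum_{k=1}^{\infty} c_j(k)v_r(k)$ converges absolutely, and a direct tail estimate gives
\[
\Big|\sum_{k=x}^{\infty} c_j(k)v_r(k)\Big| \le C_0\sum_{k=x}^{\infty}|c_j(k)|e^{-\varepsilon k} = O\bigl(x^{\deg c_j}e^{-\varepsilon x}\bigr),
\]
with the same $\varepsilon$: once $x^{\deg c_j}e^{-\varepsilon x}$ is factored out, the residual sum is uniformly bounded in $x\ge 1$. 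Consequently $S_{Pv_r}(x)$ equals a polynomial of degree $\le \deg P$ plus an error of order $x^{\deg P}e^{-\varepsilon x}$, and likewise for $S_{u_rQ}(x)$.

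Summing the four contributions yields $R(x)+O(x^r e^{-\varepsilon x})$ with $r=\max(\deg P,\deg Q)$, and $\deg R = \deg P+\deg Q+1$ because the polynomial contributions from the two cross terms have strictly smaller degree and therefore cannot cancel the leading coefficient of $S_{PQ}$. The only genuine subtlety—and the reason the naive ``both factors decay exponentially'' intuition fails—is that when $y$ is close to $x$ the factor $v_r(x-y)$ is only $O(1)$ rather than exponentially small; the Taylor-type expansion of $P(x-k)$ in powers of $x$ described above is precisely what absorbs this non-decaying tail into an explicit polynomial contribution. No step requires input beyond elementary tail estimates for geometric sums, so no serious obstacle is anticipated.
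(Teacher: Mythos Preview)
Your proposal is correct and follows essentially the same route as the paper's proof: the identical four-term decomposition, the same binomial expansion of the polynomial factor in the cross terms to extract a polynomial plus an exponentially small tail, and the same bound $C_0^2(x-1)e^{-\varepsilon x}$ on the double-error term. The only tiny slip is that your value $r=\max(\deg P,\deg Q)$ can be $0$ when both polynomials are constants, while the $S_{u_rv_r}$ term still contributes $O(xe^{-\varepsilon x})$; the paper accordingly takes $r=\max\{\deg P,\deg Q,1\}$.
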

\begin{proof} 
Define
$$
h_u(x)=u(x)-P(x)\quad\text{and}\quad h_v(x)=v(x)-Q(x).
$$
The assumption \eqref{eq:lemexppol} implies that, for all $x>0$ and some constant $C$,
\begin{equation}
\label{eq:exp-appr}
|h_u(x)|\le Ce^{-\varepsilon x}
\quad\text{and}\quad
|h_v(x)|\le Ce^{-\varepsilon x}.
\end{equation}
Then we have 
\begin{align*}
\sum_{y=1}^{x-1}u(y)v(x-y)
=&\sum_{y=1}^{x-1}P(y)Q(x-y)+\sum_{y=1}^{x-1}h_u(y)Q(x-y)\\
&+\sum_{y=1}^{x-1}P(y)h_v(x-y)+\sum_{y=1}^{x-1}h_u(y)h_v(x-y).
\end{align*}
Let $m$ and $n$ denote the degrees of polynomials $P$ and $Q$ respectively.
Then, clearly, $\sum_{y=1}^{x-1}P(y)Q(x-y)$ is a polynomial of degree $n+m+1$. Furthermore, \eqref{eq:exp-appr} implies that 
\begin{align*}
\left|\sum_{y=1}^{x-1}h_u(y)h_v(x-y)\right| 
\le C^2(x-1)e^{-\varepsilon x}.
\end{align*}
Therefore, it remains to show that the sums $\sum_{y=1}^{x-1}h_u(y)Q(x-y)$, $\sum_{y=1}^{x-1}P(y)h_v(x-y)$ can also be approximated by appropriate polynomials. Due to the symmetry, it suffices to consider one sum only. Moreover, it suffices to consider monomials. For every $k\ge 1$ we have 
\begin{align*}
\sum_{y=1}^{x-1}(x-y)^kh_u(y)
&=\sum_{j=0}^k{k \choose j}x^{k-j}(-1)^j\sum_{y=1}^{x-1}y^jh_u(y)\\
&=\sum_{j=0}^k{k \choose j}x^{k-j}(-1)^j\sum_{y=1}^{\infty}y^jh_u(y)
-\sum_{j=0}^k{k \choose j}x^{k-j}(-1)^j\sum_{y=x}^{\infty}y^jh_u(y).
\end{align*}
The first sum over $j$ is a polynomial of degree $k$. To estimate the second sum, we notice that \eqref{eq:exp-appr} yields
$$
\left|\sum_{y=x}^{\infty}y^jh_u(y)\right|
\le \widehat{C} x^je^{-\varepsilon x}.
$$
Therefore, 
$$
\sum_{j=0}^k{k \choose j}x^{k-j}(-1)^j\sum_{y=x}^{\infty}y^jh_u(y)
=O\left(x^ke^{-\varepsilon x}\right).
$$
This completes the proof of the lemma and show that one can take $r=\max\{n,m,1\}$.
\end{proof}

Using this lemma we can now consider polynomial approximations for the functions $q_\ell$ and $\overline{q}_\ell$ defined in \eqref{eq:qrecurrent} and \eqref{eq:qrecurrent-a} respectively.
\begin{lemma} \label{lem:qalmpoly}
For every $\ell\ge1$ there exist polynomials $Q_\ell(x)$, $\overline{Q}_\ell(x)$ of degree $\ell$ such that
\begin{align*}
    &q_{\ell}(x) = Q_{\ell}(x) + O(x^\ell e^{-\varepsilon x}),\\
    &\overline{q}_{\ell}(x) = \overline{Q}_{\ell}(x) + O(x^\ell e^{-\varepsilon x})
\end{align*}
for some $\varepsilon>0$.
\end{lemma}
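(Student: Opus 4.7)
The plan is to prove both statements by simultaneous induction on $\ell$, using the recurrence equations \eqref{eq:qrecurrent} and \eqref{eq:qrecurrent-a} together with Lemmas \ref{lem:psipoly} and \ref{lem:explop}. The key bookkeeping is that for every $j\ge -1$ the function $\psi_j$ itself admits a polynomial approximation of degree $j+1$ with exponentially small error: when $j=2k-1$ is odd, $\psi_j=\theta_k$ is exactly a polynomial of degree $2k=j+1$ by Proposition \ref{prop:asymplattice}; when $j=2k$ is even, Lemma \ref{lem:psipoly} gives $\psi_j(x)=P_k^{\psi}(x)+O(e^{-\varepsilon x})$ with $\deg P_k^{\psi}=2k+1=j+1$; and $\psi_{-1}\equiv 1/\sqrt{2\pi}$ fits this trivially with degree $0$.

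For the base cases $\ell=0,1$ I would invoke the classical estimate of Gel'fond for the renewal function under an exponential moment assumption, which gives $V(x)=cx+d+O(e^{-\varepsilon x})$ with some $c,d$ and hence
\begin{equation*}
q_1(x)=-\sqrt{\tfrac{2}{\pi}}\,V(x)=Q_1(x)+O(e^{-\varepsilon x}),
\qquad
q_0(x)=V(x+1)-V(x)=Q_0(x)+O(e^{-\varepsilon x}),
\end{equation*}
with $\deg Q_1=1$ and $\deg Q_0=0$. The corresponding statements for $\overline{q}_0,\overline{q}_1$ follow from the analogous renewal identities attached to the non-strict ladder epochs.

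For the inductive step, suppose the claim holds for all indices $<\ell$. The recurrence \eqref{eq:qrecurrent} expresses $-\tfrac{\ell}{2}q_\ell(x)$ as $\psi_{\ell-2}(x)$ plus a finite sum of convolutions $\sum_{y=1}^{x-1}\psi_j(y)q_{\ell-2-j}(x-y)$ with $-1\le j\le \ell-2$. By the observation above, $\psi_j$ is of the form (polynomial of degree $j+1$) $+\,O(e^{-\varepsilon x})$, and by induction $q_{\ell-2-j}$ is of the form (polynomial of degree $\ell-2-j$) $+\,O(x^{\ell-2-j}e^{-\varepsilon x})$. Lemma \ref{lem:explop} then yields for each term a polynomial of degree $(j+1)+(\ell-2-j)+1=\ell$ plus an error $O(x^{r}e^{-\varepsilon x})$ for some $r$, which we may majorize by $O(x^{\ell}e^{-\varepsilon' x})$ after a harmless shrinkage of the rate. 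The non-convolved summand $\psi_{\ell-2}$ is of degree at most $\ell-1\le\ell$, and dividing by $-\ell/2$ gives the desired representation for $q_\ell$; the argument for $\overline{q}_\ell$ is identical, with \eqref{eq:qrecurrent-a} replacing \eqref{eq:qrecurrent} and the convolution range widened to $0\le y\le x$.

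The main obstacles are the two supporting pieces rather than the induction itself. First, the base cases rest on Gel'fond's sharp estimate, which requires the exponential moment hypothesis of Theorem \ref{thm:V-asymp}; without it one cannot start the induction. Second, one must track the polynomial prefactor accompanying the exponential decay across repeated applications of Lemma \ref{lem:explop}, and verify that at each stage a small worsening of the decay rate $\varepsilon$ suffices to absorb this prefactor; since only finitely many inductive steps are performed to reach a given $\ell$, a single positive $\varepsilon$ depending on $\ell$ will do. Once these two points are in place, the degree count and the summation bookkeeping proceed mechanically, and together with the symmetric argument for $\overline{q}_\ell$ complete the proof.
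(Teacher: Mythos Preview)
Your proposal is correct and follows essentially the same approach as the paper: induction on $\ell$ via the recurrences \eqref{eq:qrecurrent} and \eqref{eq:qrecurrent-a}, with Gel'fond's estimate for the renewal function supplying the base cases $\ell=0,1$, Lemma~\ref{lem:psipoly} supplying the polynomial approximation of $\psi_j$ for even $j$, and Lemma~\ref{lem:explop} handling each convolution to produce a polynomial of degree $(j+1)+(\ell-2-j)+1=\ell$. Your explicit remark about absorbing the polynomial prefactor $x^r$ into the exponential by shrinking $\varepsilon$ is a point the paper leaves implicit, so if anything you are slightly more careful there.
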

\begin{proof}
We consider the functions $q_\ell$ only.

Recall that 
$$
q_0(x)=\sum_{n=1}^\infty\pr(S_n=x,\tau_0>n)=V(x+1)-V(x)
$$
and 
$$
q_1(x)=-\sqrt{\frac{2}{\pi}}V(x),
$$
where $V(x)$ is the renewal function of ladder heights. It is known that the assumption 
$\E e^{\lambda|X|}<\infty$ implies that $V(x)=ax+b+O(e^{-\varepsilon x})$ for appropriate
constants $a$ and $b$. This implies the desired approximations for $q_0(x)$ and $q_1(x)$.

Assume now that $\ell\ge2$. Since $\psi_{-1}(x) \equiv \frac{1}{\sqrt{2\pi}}$ we  can rewrite \eqref{eq:qrecurrent} as follows
\begin{equation*}
-\frac{\ell}{2} q_\ell(x)=\psi_{\ell-2}(x) 
+
\frac{1}{\sqrt{2\pi}}\sum_{y=1}^{x-1} q_{\ell-1}(x-y) + 
\sum_{y=1}^{x-1}\sum_{j=0}^{\ell-2}\psi_j(y)q_{\ell-2-j}(x-y).
\end{equation*}
This equality allows one to use the induction. Assume that the lemma holds for $q_{j}$ with $j\le\ell-1$. Then, using Lemma~\ref{lem:explop} with $u(y)\equiv1$ and 
$v(y)=Q_{\ell-1}(y)$, we conclude that there exists a polynomial $P_\ell$ of degree $\ell$ such that
\begin{align*}
\sum_{y=1}^{x-1} q_{\ell-1}(x-y)=P_\ell(x)
+O\left(x^{\ell}e^{-\varepsilon x}\right).
\end{align*}

If the index $j$ is odd then $\psi_j(x)$ is a polynomial of degree $j+1$.
Moreover, if $j$ is even, then one can approximate $\psi_j(x)$ by a polynomial of degree $j+1$. This allows us to apply again Lemma~\ref{lem:explop} and to conclude that
\begin{align*}
    \sum_{y=1}^{x-1}\psi_j(y)q_{\ell-2-j}(x-y) = R(x) + O(x^\ell e^{-\varepsilon x})
\end{align*}
for some polynomial $R$ of degree $(j + 1) + (\ell -2 - j) + 1 = \ell$. 
This complets the proof.
\end{proof}
Recalling that
\begin{align*}
U_j(x) = q_{2j-1}(x)\quad\text{and}\quad \overline{U}_j(x) = \overline{q}_{2j-1}(x)
\end{align*}
and applying Lemma~\ref{lem:qalmpoly}, we get the desired approximations for the functions
$U_j$ and $\overline{U}_j$. To obtain the polynomial approximations for $V_j$, it is sufficient to recall that $V_j=Q_{2j-3}$ with $Q_\ell$ defined in \eqref{eq:Q-q} and to apply once again Lemma~\ref{lem:qalmpoly}. 

\section{Proofs of Propositions~\ref{prop:S_n<0} and~\ref{prop:asymplattice}}
\label{prop8.and.9}

\begin{proof}[Proof of Proposition~\ref{prop:S_n<0}]
Assume first that the distribution of $X_1$ is non-lattice and that 
$$
\limsup_{|t|\to\infty}\left|\e e^{itX_1}\right|<1.
$$
In view of existence of $(r+3)$th   moment
it follows  from Theorem VI.3.4. in Petrov's book \cite{Petrov} that 
\begin{equation}
\label{eq:asymp1}
\pr(S_n\le x\sigma\sqrt{n})=\Phi(x) 
+\sum_{\nu=1}^{r + 1} \frac{Q_{\nu}(x)}{n^{\nu/2}}
+o\left( \frac{1}{n^{(r+1)/2}} \right).
\end{equation}
uniformly in $x\in\R$. Here $\Phi(x)$ denotes the distribution function of the standard normal distribution. The coefficients $Q_\nu(x)$ can be expressed in terms of cumulants and Hermite polynomials, see~\cite[Chapter 6, (1.13)]{Petrov},
\begin{equation}\label{qformula}
Q_\nu(x) = -\frac{1}{\sqrt{2\pi}} e^{-x^2/2} \sum H_{\nu + 2s -1}(x) \prod_{m=1}^\nu \frac{1}{k_m!}
\left(\frac{\gamma_{m+2}}{(m+2)!\sigma^{m+2}}\right)^{k_m},
\end{equation}
where the sum is taken over all integer non-negative solutions $(k_1, k_2, \dots, k_\nu)$ to the equation $k_1 + 2k_2 + \dots \nu k_\nu = \nu$
and $s$ denotes the sum of all $k_i$'s, that is, $s = k_1 + k_2 + \dots + k_\nu$.
Here $\gamma_\nu$ is the cumulant of order $\nu$ of the random variable $X_1$, and $H_m(x)$ is the $m$th Hermite polynomial:
\begin{equation*}
H_m(x) = (-1)^m e^{x^2/2}\frac{d^m}{d x^m}e^{-x^2/2}.
\end{equation*}
For these polynomials one has also the following representation: 
\begin{equation} \label{hformula}
H_m(x) = m! \sum_{k=0}^{\lfloor \frac{m}{2} \rfloor} 
\frac{(-1)^k x^{m-2k}}{k!(m-2k)!2^k}.
\end{equation}
Since $e^{-x^2/2}$ is even, Hermite polynomials $H_m(x)$ are even for even $m$ and odd for odd $m$. 
Then it follows from the representation~\eqref{qformula} that 
$Q_\nu(x)$ is even for odd $\nu$ and odd for even $\nu$. 
Consequently, $Q_\nu(0)=0$ for all even indices $\nu$.
From this observation and from \eqref{eq:asymp1} we infer that
\begin{align*}
\pr(S_n\le 0)
&=\frac{1}{2}+\sum_{\nu={1}}^{r+1}\frac{Q_\nu(0)}{n^{\nu/2}}
+
o\left( \frac{1}{n^{(r+1)/2}} \right)\\
&=\frac{1}{2}+\sum_{j=1}^{\lfloor r/2\rfloor+1}
\frac{Q_{2j-1}(0)}{n^{j-1/2}}
+o\left( \frac{1}{n^{(r+1)/2}} \right)
\end{align*}
and, consequently,
\begin{align}
\label{eq:asymp2} 
\frac{\Delta_n}{n}=-\sum_{j=1}^{\lfloor r/2\rfloor +1}
\frac{Q_{2j-1}(0)}{n^{j+1/2}}+o\left( \frac{1}{n^{(r+3)/2}}\right).
\end{align}

We next derive a similar decomposition in the lattice case. Here, 
in view of existence of $(r+3)$th   moment 
we can apply Theorem VI.3.6. in \cite{Petrov} to get the following expression 
\begin{multline}
\label{eq:asymp1discrete}
\pr(S_n\le x\sigma\sqrt{n})=U_r(x)
+ \sum_{\nu=1}^{r + 1}
\delta_\nu \left( \frac{1}{\sigma \sqrt{n}}\right)^\nu 
S_\nu \left(x\sigma \sqrt{n} \right)\frac{d^\nu}{dx^\nu}U_r(x)
\\+ o\left( \frac{1}{n^{(r+1)/2}} \right),
\end{multline}
where
\begin{equation*}
    U_r(x) = \Phi (x) + \sum_{j=1}^{r + 1} \frac{Q_{j}(x)}{n^{j/2}},
\end{equation*}

\begin{equation*}
    S_{2k}(x) = 2\sum_{\ell=1}^\infty \frac{\cos 2\pi \ell x}{(2\pi\ell)^{2k}}, \quad S_{2k+1}(x ) 
    = 2\sum_{\ell=1}^\infty \frac{\sin 2\pi\ell x}{(2\pi\ell)^{2k+1}},
\end{equation*}
and
\begin{equation*}
    \delta_\nu = 
    \begin{cases}
    +1, \text{ if }\nu = 4m+1, 4m+2,\\
    -1, \text{ if }\nu = 4m+3, 4m.
    \end{cases}
\end{equation*}
In case of $x=0$ we then have
\begin{equation*}
    S_{2k}(0) = 2\sum_{\ell=1}^\infty \frac{1}{(2\pi\ell)^{2k}}, \quad 
    S_{2k+1}(0) = 0.
\end{equation*}
Our next goal is to show that for $\nu = 0, \dots,  r + 1$ the expression
\begin{equation} \label{expression}    
\left( \frac{1}{\sigma \sqrt{n}}\right)^\nu 
\left[\frac{d^\nu}{dx^\nu}U_r(x)\right]_{x=0}
\end{equation}
contains only odd powers of $\sqrt{n}$. 
For that note that 
when $\nu$ is even 
and $j$ is odd 
\[
\left[\frac{d^\nu}{dx^\nu}Q_j(x)\right]_{x=0} = 0 
\]
since $Q_j(x)$ is an even function. 
Similarly, 
when $\nu$ is odd 
and $j$ is even the same equality holds.

Hence after substituting $x=0$ we obtain only odd powers of $\sqrt{n}$ and, consequently, 
\begin{align}
\label{eq:asymp 3}
\frac{\Delta_n}{n} = \frac{1}{2} - \pr(S_n\le 0) 
&=\sum_{j=1}^{\lfloor r/2\rfloor+1}
\frac{\widetilde{Q}_{{j}}}{n^{j+1/2}}
+o\left( \frac{1}{n^{(r+3)/2}} \right),
\end{align}
where $\widetilde{Q}_{j}$ are some real numbers. 
Using the asymptic expansion~\eqref{eq:anasympdecomp} 
with  \eqref{eq:asymp2} and  \eqref{eq:asymp 3}, we obtain the desired representation for $\Delta_n/n$.
\end{proof}

\begin{proof}[Proof of Proposition~\ref{prop:asymplattice}]
In the lattice case, by Theorem VII.3.13. in \cite{Petrov} with $k = r+3$ we have,
uniformly in $x$,
\begin{align} \label{eq:asymp4}
    \sigma \sqrt{n} p_n(x) = \frac{1}{\sqrt{2\pi}} e^{-t^2/2} + \sum_{\nu=1}^{r+1} \frac{q_{\nu}(t)}{n^{\nu/2}} + o\left(\frac{1}{n^{(r+1)/2}}\right),
\end{align}
where
$$
t=\frac{x}{\sigma \sqrt{n}}
$$
and 
\begin{align*}
    q_{\nu}(t) = \frac{1}{\sqrt{2\pi}} e^{-t^2/2}\sum H_{\nu + 2s}(t) \prod_{m=1}^\nu \frac{1}{k_m!}
\left(\frac{\gamma_{m+2}}{(m+2)!\sigma^{m+2}}\right)^{k_m}.
\end{align*}
As before, the sum is taken over all integer non-negative solutions $(k_1, k_2, \dots, k_\nu)$ to the equation $k_1 + 2k_2 + \dots \nu k_\nu = \nu$
and $s$ denotes the sum of all $k_i$'s, that is, $s = k_1 + k_2 + \dots + k_\nu$. Furthermore, $\gamma_\nu$ is the cumulant of order $\nu$ of the random variable $X_1$ and $H_m(t)$ is the $m$-th Hermite polynomial.

Before we start to analyse the behaviour of functions $q_\nu (t)$, we show that \eqref{eq:asymp4} holds also in the absolute continuous case. By Theorem VII.3.15. in \cite{Petrov} again with $k=r+3$ we have
\begin{align*}
    \frac{d}{dt} \pr \bigg(\frac{1}{\sigma \sqrt{n}}\sum_{i=1}^n X_i \in (0,t) \bigg)
     = \frac{1}{\sqrt{2\pi}} e^{-t^2/2} + \sum_{\nu=1}^{r+1} \frac{q_{\nu}(t)}{n^{\nu/2}} + o\left(\frac{1}{n^{(r+1)/2}}\right).
\end{align*}
For $t=x/ (\sigma \sqrt{n})$ we have
\begin{equation*}
    \frac{d}{dt} \pr \big(\frac{1}{\sigma \sqrt{n}}\sum_{i=1}^n X_i \in (0,t) \big)
    = \sigma \sqrt{n} \frac{d}{dx}\bigg( \pr \big(S_n \in (0,x) \big)\bigg)'_x.
\end{equation*}
This gives one validity of \eqref{eq:asymp4} in the absolute continuous case. 

Due to the asymptotic expansion of the binomial coefficients~\eqref{eq:anasympdecomp} in order to prove the proposition it suffices to show that for all
$\nu, s\ge0$ there exist polynomials $\xi_{j,\nu,s}$ of degree at most $2j$ such that
\begin{align} \label{eq:onetermdec}
    \left|
    \exp\left(-\frac{x^2}{2\sigma n}\right) H_{\nu + 2s}
    \left(\frac{x}{\sigma \sqrt{n}}\right) n^{-{\nu/2}} - \sum_{j=0}^{\lf 
r/2\rf}\frac{\xi_{j,\nu,s}(x)}{n^{{j}}} 
\right|
\le
C_{\nu,s,r}\frac{(1+|x|)^{r+1}}{n^{(r+1)/2}}.
 \end{align}
(Note that $H_0(x) = 1$ so the term $e^{-t^2/2}/\sqrt{2\pi}$  is also covered.) Indeed, we can straightforwardly sum up such estimates to get the desired expansion for $\sigma \sqrt{n}p_n(x)$. Then dividing it by $\sigma \sqrt{n}$ and changing the basis from $n^{-(j+1/2)}$ to $a_{n-1}^{(j+1)}$ one obtains \eqref{eq:prop6}.

It follows from \eqref{hformula} that
\begin{equation*}
H_m\left(\frac{x}{\sigma \sqrt{n}}\right) 
=m! \sum_{k=0}^{\lf \frac{m}{2} \rf }\frac{1}{n^{m/2-k}}
    \frac{(-1)^k x^{m-2k}}{k!(m-2k)!2^k\sigma^{m-2k}}
    =:
    \sum_{k=0}^{\lf \frac{m}{2} \rf }
    \frac{w_{k,m}(x)}{n^{m/2-k}}.
\end{equation*}
Therefore,
\begin{equation} \label{eq:hermitpol1}
    \frac{H_{\nu + 2s}(\frac{x}{\sigma \sqrt{n}})}{n^{\nu/2}} 
    =
    \frac{1}{n^{\nu/2}}
    \sum_{k=0}^{\lf \frac{\nu + 2s}{2} \rf }
    \frac{w_{k,\nu + 2s}(x)}{n^{\nu/2 +s-k}}
    =
    \sum_{k=0}^{\lf \frac{\nu + 2s}{2} \rf }
    \frac{w_{k,\nu + 2s}(x)}{n^{\nu + s-k}}.
\end{equation}
Here notice that in the last formula we have monomials $x^a/n^b$ with $b=(a+\nu)/2$. This implies that the left hand side is bounded by some constant for all $|x|\le \sqrt{n}$.

Fix some $m\ge 1$. By the series representation for $e^{-t}$, for every $t\ge 0$ there exists $\theta=\theta(t) \in [0,t]$ such that
\begin{align*}
    e^{-t} = \sum_{k=0}^m \frac{(-1)^k t^k}{k!} + \frac{(-1)^{m+1}e^{-\theta}}{(m+1)!}t^{m+1}.
\end{align*}
Therefore,
\begin{align*}
    \left|e^{-t} - \sum_{k=0}^m \frac{(-1)^k t^k}{k!}
    \right|
    \le \frac{t^{m+1}}{(m+1)!}.
\end{align*}
Let
\begin{align*}
    y_m(t) = e^{-t} - \sum_{k=0}^m \frac{(-1)^k t^k}{k!}.
\end{align*}
Consequently,
\begin{align} \label{ineq:restexptaylor}
    \left|y_m\left(
    -{\frac{x^2}{2\sigma^2n}}
    \right)\right|
    \le \frac{1}{2^{m+1}\sigma^{2m+2}(m+1)!}\frac{x^{2m+2}}{n^{m+1}}.
\end{align}
Setting $d_k = (-1)^k/\big(k!(2\sigma^2)^k\big)$, we arrive at the following representation:
\begin{align} \label{eq:exptailor}
    \exp \left(
    -\frac{x^2}{2\sigma^2 n}
    \right)
    = \sum_{k=0}^m d_k\frac{x^{2k}}{ n^k} + y_m\left(
    -{\frac{x^2}{2\sigma^2n}}
    \right).
\end{align}
Substituting \eqref{eq:exptailor} and \eqref{eq:hermitpol1} into \eqref{eq:onetermdec}, we obtain
\begin{align*}
    \exp\left(-\frac{x^2}{2\sigma n}\right) H_{\nu + 2s}&
    \left(\frac{x}{\sigma\sqrt{n}}\right) n^{-\nu} \\&=
    \left(
    \sum_{k=0}^m d_k\frac{x^{2k}}{ n^k} + y_m\left(
    -{\frac{x^2}{2\sigma^2n}}
    \right)
    \right)
    \left(
    \sum_{k=0}^{\lf \frac{\nu + 2s}{2} \rf }
    \frac{w_{k,\nu + 2s}(x)}{n^{\nu + s-k}}
    \right).
\end{align*}
As we have already mentioned, the second factor s bounded by a constant in the case when $|x| \le \sqrt{n}$. Consequently, there exists a constant $C$ such that
\begin{align*}
    \left|
    y_m\left(
    -{\frac{x^2}{2\sigma^2n}}
    \right)
    \left(
    \sum_{k=0}^{\lf \frac{\nu + 2s}{2} \rf }
    \frac{w_{k,\nu + 2s}(x)}{n^{\nu + s-k}}
    \right)\right|
    \le
    C \frac{x^{2m+2}}{n^{m+1}},
    \quad |x|\le\sqrt{n} .
\end{align*}
By the same reason there are some polynomials $\xi_{j,\nu,s}$ of degree $2j$ such that, uniformly in $|x| \le \sqrt{n}$,
\begin{align*}
    \left|
    \left(
    \sum_{k=0}^m d_k\frac{x^{2k}}{ n^k}
    \right)
    \left(
    \sum_{k=0}^{\lf \frac{\nu + 2s}{2} \rf }
    \frac{w_{k,\nu + 2s}(x)}{n^{\nu + s-k}}
    \right)
    -
    \sum_{j=0}^{m}\frac{\xi_{j,\nu,s}(x)}{n^{j}}
    \right| 
    \le
    C
    \frac{(1+|x|)^{2m+2}}{n^{m+1}}.
\end{align*}
In order to obtain \eqref{eq:onetermdec} we choose
\begin{align*}
    m = \lf r/2 \rf :=
    \begin{cases}
        (r-1)/2, \quad &r \equiv 1\mod2,\\
        r/2,  &\text{else.}
    \end{cases}
\end{align*}
For odd $r$ we immediately got \eqref{eq:onetermdec} for $|x|\le \sqrt{n}$ and for even $r$ we additionally apply estimation
\begin{align*}
    \frac{(1+|x|)^{r+2}}{n^{(r+2)/2}}
    \le \frac{1+|x|}{\sqrt{n}}\frac{(1+|x|)^{r+1}}{n^{(r+1)/2}} \le 2 \frac{(1+|x|)^{r+1}}{n^{(r+1)/2}}, \quad n\ge1.
\end{align*}
So now we can conclude that one has
\begin{align*}
p_n(x)
=
\sum_{j=0}^{\lf 
r/2\rf}\theta_j(x) a_{n-1}^{(j+1)}
+
h_n(x),
\end{align*}
where $\theta_j$ is a polynomial of degree $2j$ as a linear combination over $\nu$ and $s$ of $\xi_{j,\nu,s}$ and for $|x| \le \sqrt{n}$ it holds
\begin{align} \label{eq:restestim}
    |h_n(x)|\le C_r \frac{(|x|+1)^{r+1}}{n^{(r+2)/2}}, \quad n\ge 1.
\end{align}

Now, the final step is to show validity of \eqref{eq:restestim}
in case $|x| \ge \sqrt{n}$. One can write
\begin{align*}
    \left| p_n(x)  - \sum_{j=0}^{\lf 
    r/2\rf}
    \theta_j(x) a_{n-1}^{(j+1)}\right|
    \le |p_n(x)| + \sum_{j=0}^{\lf 
    r/2\rf}
    \left|\theta_j(x) a_{n-1}^{(j+1)}\right|.
\end{align*}
For the first term one infers {from the local central limit theorem} that
\begin{align*}
    |p_n(x)| \le \frac{c}{n^{1/2}} \le \frac{c x^{r+1}}{n^{(r+2)/2}}.
\end{align*}
Moreover, for every $j$ and all $x$ with $|x|\ge\sqrt{n}$ one has the estimate
\begin{align*}
    \left|\theta_j(x) a_{n}^{(j+1)}\right| \le 
    \left|\theta_j(x) \right| \cdot \left|a_{n-1}^{(j+1)}\right|\le c_j \frac{|x|^{2j}}{n^{j+1/2}} \le c_j\frac{|x|^{r+1}}{n^{(r+2)/2}}.
\end{align*}
This gives the desired bound in case $|x|\ge \sqrt{n}$. The proof is then complete.
\end{proof}
 

\begin{thebibliography}{99}

\bibitem{BD94} Bertoin, J. and Doney, R.A.  \newblock On conditioning
  a random walk to stay nonnegative.  \newblock \emph{Ann. Probab.},
  22:2152--2167, 1994.


\bibitem{BGT} Bingham N.H., Goldie C.M., Teugels J.L.
\newblock {\em Regular variation.} 
\newblock Cambridge University Press, 1987.

\bibitem{Borovkov62} Borovkov, A.A.
\newblock New limit theorems in boundary-value problems for sums of independent terms.
\newblock {\em Siberian Math. J.} {\bf 3}:645--694, 1962.

\bibitem{Borovkov13} Borovkov, A.A.
\newblock \emph{Probability Theory.}
\newblock Springer-Verlag, London, 2013. 



\bibitem{CFR20} Chapon, F., Fusy, E. and Raschel, K.
\newblock Polyharmonic functions and random processes in cones.
\newblock {\em International Conference on Probabilistic, Combinatorial and Asymptotic
Methods for the Analysis of Algorithms}, 2020.

\bibitem{DW15} Denisov, D. and Wachtel, V.
    Random walks in cones.
    \emph{Ann. Probab.}, \textbf{43}: 992-1044, 2015. 
    

\bibitem{DenisovFitzgerald23} 
Denisov, D. and FitzGerald, W.
\newblock Ordered exponential random walks.
\newblock {\em ALEA, Lat. Am. J. Probab. Math. Stat. } {\bf 20}:1211–1246, 2023.

\bibitem{Doney12}Doney, R.A. 
\newblock Local behaviour of first passage probabilities. 
\newblock {\em Probab. Theory Relat. Fields},
{\bf 152}:559–588, 2012.

\bibitem{ENR23}Elvey-Price, A., Nessmann, A. and Raschel, K.
\newblock Logarithmic terms in discrete heat kernel expansions in the quadrant.
\newblock ArXiv: 2309.15209.

\bibitem{Feller71} Feller, W.
\newblock {\em An Introduction to Probability Theory and Its Applications},
Vol.~2, 2nd ed. 
\newblock Wiley, New York, 1971.


\bibitem{FlajoletSedgewick} Flajolet, P. and Sedgewick, R.
\newblock {\em Analytic Combinatorics.}
\newblock Cambridge University Press, 2009.

\bibitem{GramaXiao21} Grama, I. and Xiao, H.
\newblock Conditioned local limit theorems for random walks on the real line
\newblock ArXiv preprint: 2110.05123.

\bibitem{Gelfond64}Gelfond, A. O.
\newblock An estimate for the remainder term in the limit theorem for recurrent events.(Russian. English summary)
\newblock {\em Teor. Verojatnost. i Primenen.}, {\bf 9}:327--331, 1964.


\bibitem{Koroljuk62} Koroljuk, V. S.
\newblock Asymptotic analysis of distributions of maximum deviation on a lattice
random walk. 
\newblock {\em Teor. Verojatnost. i Primenen.} {\bf 7}: 393--409, 1962. 

\bibitem{Nagaev70} Nagaev, S. V.
\newblock Asymptotic expansions for the distribution function of the maximum of the sums of independent identically distributed random variables.
\newblock {\em Siberian Math. J.} {\bf 11}:288--309, 1970.

\bibitem{Nessmann22}Nessmann, A.
\newblock Polyharmonic functions in the quarter plane.
\newblock ArXiv: 2212.07258.

\bibitem{Nessmann23}Nessmann, A.
\newblock Full asymptotic expansions for orbit-summable quadrant walks and discrete
polyharmonic functions.
\newblock ArXiv: 2307.11539.

\bibitem{Petrov} Petrov, V. V. 
\newblock {\em Sums of independent random variables.}
Ergebnisse der Mathematik und ihrer Grenzgebiete, Band 82. 
\newblock Springer-Verlag, New York-Heidelberg, 1975.

\bibitem{Rog71} Rogozin B.A. 
\newblock On the distribution of the first ladder moment and height and fluctuations of a random walk.
\newblock{\em Theory Probab. Appl.,} {\bf 16}:575-595, 1971.

\bibitem{VW09} Vatutin, V.A. and Wachtel, V.
\newblock Local probabilities for random walks conditioned to stay positive.
\newblock{\em Probab. Theory Relat. Fields,} {\bf143}:177--217, 2009.
\end{thebibliography}
\end{document}